\newcommand{\Mbold}{\mathbf{M}}
\newcommand{\Wbold}{\mathbf{W}}
\newcommand{\Rbold}{\mathbf{R}}
\newcommand{\Lambdabold}{\bm{\Lambda}}
\newcommand{\lambdabold}{\bm{\lambda}}
\newcommand{\R}{\mathbb{R}}
\DeclareMathOperator*{\argmin}{arg\,min}
\DeclareMathOperator*{\argmax}{arg\,max}
\DeclareMathOperator*{\diag}{diag}
\newtheorem{theorem}{Theorem}[section]
\newtheorem{corollary}{Corollary}[theorem]
\newtheorem{lemma}[theorem]{Lemma}
\newtheorem{proposition}[theorem]{Proposition}
\newtheorem{definition}[theorem]{Definition}
\newtheorem{remark}[theorem]{Remark}
\setlist[enumerate]{leftmargin=.5in}
\setlist[itemize]{leftmargin=.5in}
\title{A  flexible space-variant anisotropic regularisation for image restoration with automated parameter selection\thanks{LC acknowledges the support of the Fondation Math\'ematiques Jacques Hadamard (FMJH). The research of LC and AL was supported by the Research in Paris (RiP) project 2018 \emph{Space-variant anisotropic regularisation for image restoration}, IHP, Paris. Research of AL, MP and FS was supported by the ``National Group for Scientific Computation
(GNCS-INDAM)'' and by the ex60 project ``Funds for selected research topics''.}}
\date{}
\author{
Luca Calatroni\thanks{CMAP, \'Ecole Polytechnique, Palaiseau, 91128, Route de Saclay, France
(\href{mailto:luca.calatroni@polytechnique.edu}{luca.calatroni@polytechnique.edu}).} \and Alessandro Lanza\thanks{Department of Mathematics, University of Bologna, Piazza di Porta San Donato 5, Bologna, Italy 
  (\href{mailto:alessandro.lanza2@unibo.it}{alessandro.lanza2@unibo.it},  \href{mailto:monica.pragliola2@unibo.it}{monica.pragliola2@unibo.it}, \href{mailto:fiorella.sgallari@unibo.it}{fiorella.sgallari@unibo.it}).} 
  \and Monica Pragliola\footnotemark[3] 
  \and  Fiorella Sgallari\footnotemark[3]
}
\begin{document}

\maketitle

\begin{abstract}
We propose a new space-variant anisotropic regularisation term for variational image restoration, based on the statistical assumption that the gradients of the target image distribute locally according to a bivariate generalised Gaussian distribution. The highly flexible variational structure of the corresponding regulariser encodes several free parameters which hold the potential for faithfully modelling the local geometry in the image and describing local orientation preferences.
For an automatic estimation of such parameters, we design a robust maximum likelihood approach and report results on its reliability on synthetic data and natural images. 
For the numerical solution of the corresponding image restoration model, we use an iterative algorithm based on the Alternating Direction Method of Multipliers (ADMM). 
A suitable preliminary variable splitting together with a novel result in multivariate non-convex proximal calculus yield a very efficient minimisation algorithm. Several numerical results showing significant quality-improvement of the proposed model with respect to some related state-of-the-art competitors are reported, in particular in terms of texture and detail preservation.
\end{abstract}

\textbf{Keywords}: Image reconstruction, Multivariate Generalised Gaussian Distribution, Space-variant regularisation, Anisotropic modelling, Non-convex variational modelling, ADMM.

\vspace{0.3cm}

\textbf{Note}: Accepted for publication in SIAM Journal of Imaging Sciences. Please cite as appropriate.

% REQUIRED
%\begin{AMS}
%68U10, 94A08, 65K10.
%\end{AMS}

\section{Introduction}
\label{sec:intro}
Image restoration is the task of recovering a clean and sharp image from a noisy, and potentially blurred, observation. In mathematical terms,
let $\Omega$ be a rectangular image domain of size $d_1\times d_2$ and let $n:=d_1 d_2$ be the total number of image pixels in $\Omega$. For a given blurred and noisy image $g\in\mathbb{R}^n$, the typical image restoration inverse problem can be written as
\begin{equation} \label{eq:GDM}
\text{find }u\in \mathbb{R}^{n}\qquad\text{such that}\qquad g= \mathcal{T}\left( K u \right) ,
\end{equation}
where $K \in \mathbb{R}^{n \times n}$ is a known linear blurring operator, while $\mathcal{T}(\cdot)$ denotes the operator modelling the presence of noise in $g$ in a non-deterministic and very likely non-linear way.

Due to the ill-posedness of the problem \eqref{eq:GDM}, it is in general impossible to find $u$ from \eqref{eq:GDM} due to the lack of stability and/or uniqueness properties. Therefore, in practice, the task can be reformulated as the problem of finding an estimate $u^*$ of the desired $u$ as accurate as possible via a well-posed problem. In particular, variational regularisation methods compute the restored image  $u^*\in\mathbb{R}^n$ as a minimiser of a cost functional $\mathcal{J}: \mathbb{R}^n \to \mathbb{R}^+$ such that the problem can be formulated as 
\begin{equation}
\text{find }u^* \in \argmin_{u \in \mathbb{R}^n}
~\Big\{ \mathcal{J}(u):= R(u) + \mu F(Ku;g) \Big\}.
\label{eq:GVM}
\end{equation}
The functionals $R$ and $F$ are commonly referred to as the \emph{regularisation} and the \emph{data fidelity} term, respectively. While $R$ encodes prior information on the desired image $u$ (such as its regularity and its sparsity patterns), the $F$ is a data term which measures the `distance' between the given image $g$ and $u$ after the action of the operator $K$ with respect to some norm corresponding to the noise statistics in the data, cf., e.g., \cite{stuartInversebook}. The regularisation parameter $\mu > 0$ controls the trade-off between the two terms.

A very popular choice for $R$ is the Total Variation (TV) semi-norm \cite{ROF,chambollelions1997,vese2001}, which is defined in the discrete setting as
\begin{equation}
R(u) \:\;{=}\;\: \mathrm{TV}(u) \,\;{:=}\; \sum_{i=1}^{n}  \| (\nabla u)_{i} \|_2 \, ,
\label{eq:TV}
\end{equation}
where for each $i=1,\ldots,n,$ by $(\nabla u)_i \in \mathbb{R}^2$ we denote the discrete gradient of image $u$ at pixel $i$.
The choice of TV-type regularisations for image restoration problems became very popular in the last three decades due mainly to its convexity and, most importantly, its edge-preservation capability.

As mentioned above, the choice of $F$ depends on the noise distribution in the data. In this paper, we are particularly interested in the case of additive (zero-mean) white Gaussian noise (AWGN),
%and Laplace noise (AWLN) which typically appear, e.g., in Magnetic Resonance Tomography and the impulsive salt and pepper noise (SPN) typically associated to transmission errors and/or malfunctioning pixels in camera sensors.
%\textcolor{red}{DO WE USE AWLN AND SPN? IF NOT REMOVE}
i.e. we consider the following form for degradation model in \eqref{eq:GDM}:
\begin{equation}   \label{eq:SDM}
g=\mathcal{T}\left( K u \right) = Ku+b,
\end{equation}
%
%\begin{equation} \label{eq:SDM}
%\begin{array}{ccc}
%\mathrm{AWGN\ and\ AWLN:} & \qquad & \mathrm{SPN:} %\vspace{0.2cm} \\
%g = Ku + n,  & &
%g_{i}=\left\{
%\begin{array}{ll}
%(K u)_i \quad & \mathrm{for} \quad i \in \Omega_0 \subseteq %\Omega\\
%n_i \in \{0,1\} \quad & \mathrm{for} \quad i \in {\Omega_1} %:=\Omega \setminus \Omega_0.
%\end{array} \right. 
%\end{array}
%\end{equation}
%
%
% \begin{equation}
% \begin{array}{ccc}
% \mathrm{AWGN:} & \quad\;\; & \mathrm{SPN:} \vspace{0.1cm} \\
% g_i \:\;{=}\;\: (K u)_i \;{+}\; n_i \;\;\: \forall \, i \in \Omega \, , & &
% g_{i} \:\;{=}\;\: \left\{
% \begin{array}{ll}
% (K u)_i \;\; & \mathrm{for} \;\;\: i \in \Omega_0 \subseteq \Omega\\
% n_i \in \{0,1\}          & \mathrm{for} \;\;\: i \in {\Omega_1} :=\Omega \setminus \Omega_0
% \end{array} \right. \, .
% \end{array}
% \label{eq:SDM}
% \end{equation}
%
where the additive corruption $b \in \mathbb{R}^n$ stands for a vector of independent realisations drawn from the same univariate Gaussian distribution with zero mean and variance $\sigma^2$.
%, while in the case of SPN, note that only the subset $\Omega_1$ is corrupted by noise, whereas pixels in $\Omega_0$ are noise-free. For images in the range $[0,1]$, the corrupted pixels can only assume the two possible ``salt" and ``pepper" values $\{0,1\}$ with equal probability. Note that the more general impulsive noise instance where the corrupted pixels can take \emph{any} value in $[0,1]$ with uniform probability corresponds the case of \emph{random noise}, which has been considered in the literature, e.g. in \cite{Nikolova2004}. 
%The subset $\Omega_1$ is known in some applications \cite{Serena17} or it could be estimated directly \cite{mila}. 
Note that such  noise distribution is indeed fully described by the unique scalar parameter $\sigma>0$.
%, while SPN is fully described by a parameter
%$\gamma \in [0,1]$ which represents the probability for a pixel to be noise-corrupted.
Other similar noise models appearing in applications are the Additive White Laplacian Noise (AWLN) and the impulsive Salt and Pepper Noise (SPN), which can also be fully described by a unique scalar parameter, being it either the standard deviation or the probability of a pixel of being corrupted, respectively.

Finally, the regularisation parameter $\mu$ in \eqref{eq:GVM} plays a crucial role in the reconstruction results since its size balances the smoothing provided by the regularisation and the trust in the data. Very often, $\mu$ is chosen empirically by brute-force optimisation with respect to some fixed image quality measure (such as the SNR or the SSIM). However, for AWGN data, when the noise level $\sigma$ is known effective techniques based on discrepancy principles or L-curve can be used \cite{engl2000regularization}. More recently, similar approaches with `adaptive' discrepancy principles have been proposed for possibly combined AWGN and SPN noise models in \cite{Langer2017}, while learning approaches based on the use of training sets and not requiring any prior knowledge of the noise level have been studied for optimal parameter selection in \cite{bilevellearning}.

\smallskip

It is well known that a statistically-consistent data fidelity term modelling the presence of AWGN in the data is the squared L$_2$ norm of the residual image,
%, respectively, see \cite{ROF,benningFidelities,Nikolova2004,duvaltvL1}, that is:
%
%\begin{equation} \label{eq:Lq}
%F(Ku;g) = \mathrm{L}_q(Ku;g) := \frac{1}{q} \| K u - g \|_q^q ,
%\quad q \in \{1,2\},\quad \mu>0.
%\end{equation}
%
which, combined with the TV regulariser \eqref{eq:TV} results in the popular TV-L$_2$ - or Rudin Osher Fatemi (ROF) \cite{ROF} - image restoration model:
\begin{equation}
\text{find } u^*\quad\text{such that}\quad u^*\in\argmin_{u \in \mathbb{R}^n}
\left\{ \,
\mathrm{TV}(u) \,\;{+}\;\,\frac{\mu}{2} \| K u - g \|_2^2
\, \right\}.
\label{eq:TVLq}
\end{equation}
Due to presence of the TV regulariser, model \eqref{eq:TVLq} is non-smooth, a fundamental feature which guarantees the desirable property of edge preservation. Furthermore, its convexity makes it appealing for several efficient optimisation methods - see \cite{chambolle2016introduction} for a review - and it is often used as a reference model for the study of either higher-order regularisations (e.g. the Total Generalised Variation \cite{TGV}) or of non-Gaussian \cite{benningFidelities,Nikolova2004,duvaltvL1,Sciacchitano2015} and possibly combined \cite{Calatroni2017,lanza2013} noise distributions. 

However, in addition to the well-known reconstruction drawbacks such as the \emph{staircasing effect}, the TV regulariser in \eqref{eq:TV} suffers from additional limitations. First of all, it is \emph{global} or \emph{space-invariant}, i.e. its local regularisation contribution at each pixel takes exactly the same form and, as a result, it cannot adapt its functional shape to local image structures. Furthermore, it is not adapted to situations where clear local directional texture may appear, as it happens for instance in fiber and seismic imaging applications. For the mentioned problems the use of some dominant \cite{KonDonKnu17,BayramDTV2012} or local \cite{Zhang2013} anisotropy information can strongly improve the quality of the reconstruction.

The intrinsic limits of the TV regulariser have been discussed in great detail in \cite{tvpl2} from a statistical point of view. There, the authors point out how the use of TV regularisation implicitly corresponds to consider a space-invariant one-parameter half-Laplacian Distribution (hLD) for the gradient magnitudes of $u$, which is in general too restrictive to model the actual distribution of gradient magnitudes in real images. To overcome this issue, in \cite{tvpl2} the authors propose the more general half-Generalised Gaussian Distribution (hGGD) as a prior which results in the following TV$_p$ regularisation model
\begin{equation}\label{eq:TVp}
\mathrm{TV}_p(u) := \sum_{i=1}^{n}  \| (\nabla u)_{i} \|_2^p \, ,
\quad p \:{\in}\: (0,2] \, .
\end{equation}
The exponent $p$ appearing in \eqref{eq:TVp} is a free parameter which provides the TV$_p$ regulariser with higher flexibility than the TV regulariser. The parameter $p$, however, is fixed over the whole image domain and, hence, does not allow to capture locality in the image. 

In \cite{vip,CMBBE} the authors consider a space-variant extension of the TV$_p$ regulariser in \eqref{eq:TVp}  which can better adapt to local image smoothness upon suitable parameter estimation. The new TV$_{\alpha,p}^{\mathrm{sv}}$ regulariser is there defined by
\begin{equation}\label{eq:TVpsv}
\mathrm{TV}_{\alpha,p}^{\mathrm{sv}}(u) := \sum_{i=1}^{n} \alpha_i \| (\nabla u)_{i} \|_2^{p_i} \, ,
\quad p_i \:{\in}\: (0,2], \quad \alpha_i > 0 
\;\;\, \forall \, i=1,\ldots,n,
\end{equation}
and shown to be effective on several image restoration problems.

%According to this criterion, the regularization parameter $\mu$ is chosen in such a way that the restored image $u^*$ belongs
%to the \emph{discrepancy set} $\mathcal{D}\,$ defined by
%
%\begin{equation}
%\mathcal{D}
%\;{:=}\;
%\left\{ \,
%u \in \R^{n}{:}\;\:
%\| K u - g \|_2 \;{\leq}\; \delta \;{:=}\; \tau \sigma %\sqrt{n}
%\, \right\}
%\: ,
%\label{eq:dp}
%\end{equation}
%%
%where $\tau > 0$ is a pre-determined scalar parameter controlling the standard deviation of the residue image $K u - g$.
%We notice that, to the best of our knowledge, no similar criteria have been devised for the restoration of SPN-corrupted
%images.

\subsection{Contribution} In this paper we propose a \emph{space-variant} and {directional} image regulariser denoted by DTV$_{p}^{\mathrm{sv}}$ to extend even further the TV$_{\alpha,p}^{\mathrm{sv}}$ regularization model \eqref{eq:TVpsv} as:
\begin{equation}
\mathrm{DTV}_{p}^{\mathrm{sv}}(u) \,\;{:=}\; \sum_{i=1}^{n}  \left\| \Lambda_i R_{\theta_i} \, (\nabla u)_{i} \right\|_2^{p_i} ,
\quad \;\: p_i >0 \;\;\, \forall \, i=1,\ldots,n.
\label{eq:PMa}
\end{equation}
For every $i=1,2,\ldots,n$, the weighting and rotation matrices $\Lambda_i, R_{\theta_i} \in \R^{2 \times 2}$ are defined respectively by:
\begin{equation} \label{eq:PMb2}
\Lambda_i := \begin{pmatrix}
\lambda^{(1)}_i & 0 \\ 0 & \lambda^{(2)}_i
\end{pmatrix}, \quad
\lambda_i^{(1)}\geq \lambda_i^{(2)}>0, \qquad
R_{\theta_i} := \begin{pmatrix}
\cos\theta_i & -\sin\theta_i\\
\sin\theta_i & \cos\theta_i
\end{pmatrix}, \quad 
\theta_i \:{\in}\; [0,2\pi),
\end{equation}
so that $\theta_i$ has to be understood as the local image orientation, while the parameters $\lambda^{(1)}_i$ and $\lambda^{(2)}_i$ weight at any point the TV-like smoothing along the direction $\theta_i$ and its orthogonal, respectively. 

Under this definitions, we can then define our space-variant, anisotropic (or directional) and possibly non-convex DTV$_{p}^{\mathrm{sv}}$-L$_2$ variational model for image restoration:
\begin{equation} 
\label{eq:PMb}
\text{find }\;u^* \in \; 
\argmin_{u \in \mathbb{R}^n}
~\Big\{ \, \mathcal{J}(u):=
\mathrm{DTV}_{p}^{\mathrm{sv}}(u) +  \, \frac{\mu}{2}\|Ku-g\|^2_2 \, \Big\}, \quad \mu>0.
\end{equation}
Note here that the non-convexity arises whenever $0<p_i<1$ for at least one $i=1,\ldots,n$. 

%\begin{equation}
%\text{find }u^* \in \argmin_{u \in \mathbb{R}^n}
%~\Big\{ \mathcal{J}(u):= R(u) + \mu F(Ku;g) \Big\}.
%\label{eq:GVM}
%\end{equation}
%
The proposed DTV$_{p}^{\mathrm{sv}}$ regulariser \eqref{eq:PMa}-\eqref{eq:PMb2} is highly flexible as it potentially adapts to local smoothness and directional properties of the image at hand, provided that a reliable estimation of the parameters $\lambda^{(1)}_i, \lambda^{(2)}_i, \theta_i$ and $p_i$ is given. In fact, in comparison to the previous work by the authors in \cite{vip,CMBBE}, our proposal extends the TV$_{\alpha,p}^{\mathrm{sv}}$ regularisation model \eqref{eq:TVpsv} so as to accommodate further local directional information, which can significantly improve the restoration results in the case, for instance, of textured and/or high-detailed images.

The statistical rationale of our approach relies on a prior assumption on the distribution of the gradient magnitudes of the desired image $u$ which we assume to be space-variant and locally drawn from a Bivariate Generalised Gaussian Distribution (BGGD) \cite{MGGD,shape1,shape2}. 

\smallskip

The main contribution of this work is twofold: on one side, we propose the highly-flexible DTV$_{p}^{\mathrm{sv}}$ regulariser in \eqref{eq:PMa}-\eqref{eq:PMb2} and justify its variational form via MAP estimation.  On the other hand, to guarantee its actual applicability on image restoration problems, we propose an automated efficient method for the robust estimation of the model parameters from the observed image $g$ by means of a Maximum Likelihood (ML) estimation approach. The effectiveness of such estimation is confirmed numerically on several synthetic and natural examples, showing a good agreement with local geometrical structures in the images considered in terms of their `local' shape. From a numerical point of view, we solve the optimisation problem \eqref{eq:PMb} by means of an efficient iterative minimisation algorithm based on the ADMM \cite{BOYD_ADMM} and apply it to several test images under different degradation levels, comparing the results with other relevant competing approaches.
Finally, in order to get a fully-automated image restoration approach, the regularisation parameter $\mu$ in our model \eqref{eq:PMb} is automatically adjusted along the ADMM iterations as described in \cite{APE}, such that the computed solution $u^*$ satisfies the discrepancy principle \cite{WC12}, i.e. it belongs to the discrepancy set 
\begin{equation}
%u^* \,\;{\in}\;\,
\mathcal{D} \,\;{:=}\;\, 
\left\{ \,
u \in \R^{n} : 
\| K u - g \|_2 \leq \delta:= \tau \sigma \sqrt{n} \, \right\} .
\label{discr_set}
\end{equation} 
%i.e. to lie in the feasible set defined in \eqref{discr_set} under the choice $\tau \approx 1$. For its automatic adjustment along the iterations, we make use of the adaptive strategy described .
In \eqref{discr_set}, the discrepancy threshold value $\delta$ depends on the a priori known or estimated noise level $\sigma$, the number of pixels $n$ and the discrepancy parameter $\tau$, which is typically chosen to be slightly greater than one, in order to avoid under-estimation of the noise.

\subsection{Organisation of the paper} 

Firstly, in Section \ref{sec:cont} we draw some analogies between the proposed DTV$_{p}^{\mathrm{sv}}$ discrete regulariser \eqref{eq:PMa}-\eqref{eq:PMb2} and some related previous studies on its infinite-dimensional correspondent. Then, in Section \ref{sec:map} we show that the DTV$_{p}^{\mathrm{sv}}$ regularisation model can be derived via standard MAP estimation by assuming that the image gradients are drawn locally from a space-variant BGGD. In Section \ref{sec:parameter_estimation} we describe in detail the ML procedure used 
for automatically estimating the local parameters appearing in the DTV$_{p}^{\mathrm{sv}}$ regulariser from the observed corrupted image $g$. The existence of global minimisers for the total DTV$_{p}^{\mathrm{sv}}$-L$_2$ objective functional in \eqref{eq:PMb} is then proved in Section \ref{sec:exist} via standard arguments. Next, in Section \ref{sec:admm} we describe in detail the ADMM algorithm used to compute such minimisers and present a novel useful result in multivariate non-convex proximal calculus. As far as our numerical tests are concerned, we report in Section \ref{sec:parameter_estimation_results} the results of the ML approach described above for a robust estimation of the parameter maps. In Section \ref{sec:reconstruction} we report the results obtained by the  DTV$_{p}^{\mathrm{sv}}$-L$_2$ image restoration model applied to some image deblurring/denoising problems observing its good performance in terms, mainly, of texture and detail preservation. Finally, we conclude our work with some outlook for future research directions in Section \ref{sec:conc}.

\section{Formulation in function spaces}\label{sec:cont}

The formulation of the DTV$_{p}^{\mathrm{sv}}$ regulariser \eqref{eq:PMa}-\eqref{eq:PMb2} in an infinite-dimensional function spaces defined over a regular $\Omega\subset\mathbb{R}^2$ reads as:
\begin{equation}   \label{eq:energy_DTV_cont}
\mathcal{DTV}_{p(\cdot)}^{\mathrm{sv}}(u) := \int_\Omega \left| \Mbold_{\lambdabold,\bm{\theta}}(x) \nabla u(x)\right|^{p(x)}~dx,
\end{equation}
where $p:\Omega\to (0,\infty)$ stands for the variable exponent and the tensor $\Mbold_{\lambdabold,\bm{\theta}}$ is defined for any $x\in\Omega$ in terms of analogous weighting and rotation operators as in \eqref{eq:FF4} by:
\begin{equation}  \label{eq:metric}
\Mbold_{\lambdabold,\theta}(x):=\Lambdabold_{\lambdabold}(x) \Rbold_{\theta}^T(x),
\end{equation}
where $\lambdabold = (\lambda_1,\lambda_2)\in L^\infty(\Omega;\R_{+}^2)$ and $\theta:\Omega\to [0,2\pi)$. Note that whenever $\Mbold_{\lambdabold,\theta}=\bm{\mathrm{I}}$, there is no directionality encoded in the problem. In the following, we will refer to this special case as \emph{isotropic} model.

Several well-known image regularisation models can be cast in a functional form similar to \eqref{eq:energy_DTV_cont} or in their corresponding PDE counterparts.

%In the last thirty years, they have been applied to several image reconstruction problems either in a variational form or via an anisotropic PDE formulation \cite{weickert98}.

\subsection{Constant exponent $p\geq 1$} In the convex and constant case $p(x) = p \in [1,\infty)$ for every $x\in\Omega$, \eqref{eq:energy_DTV_cont} can be thought as an anisotropic image regulariser where images are chosen as elements in the Sobolev space $W^{1,p}(\Omega)$ or, more generally, modelled as Radon measures in some subspace of $\mathrm{BV}(\Omega)$, the space of functions of bounded variation \cite{AmbrosioBV}.

In the special case $p=2$ the functional \eqref{eq:energy_DTV_cont} can be re-written  for $u\in H^1(\Omega)$ as
\begin{equation}   \label{eq:energy_DTV_contp=2}
\mathcal{DTV}_{2}^{\mathrm{sv}}(u)=\int_\Omega \left\|\nabla u(x)\right\|^2_{\Wbold_{\lambdabold,\theta}}~dx
\end{equation}
where $\|e\|_{\Wbold_{\lambdabold,\theta}}:=\sqrt{\langle e,\, \Wbold_{\lambdabold,\theta} e~\rangle}$ is a scalar product and $\Wbold_{\lambdabold,\theta} := \Mbold_{\lambdabold,\theta}^T\Mbold_{\lambdabold,\theta}$ is a symmetric and positive semi-definite anisotropic tensor. By taking the $L^2$-gradient flow of the energy in \eqref{eq:energy_DTV_contp=2}, we can easily draw connections between this choice and the standard anisotropic diffusion PDE models proposed by Weickert in \cite{weickert98,weickert02}. Indeed, by endowing $\Omega$ with Neumann boundary conditions we get that minimising $\mathcal{DTV}_{2}^{\mathrm{sv}}$ corresponds to compute the stationary solution of:
\begin{equation} \label{eq:anis_diff}
\begin{cases}
u_t = \mathrm{div}\Big( \Wbold_{\lambdabold,\theta}\nabla u \Big) &\text{on } \Omega\times (0,\infty], \\
u(x,0)=f(x) & \text{on } \Omega,\\
\langle  \Wbold_{\lambdabold,\theta}\nabla u, \bm{n}\rangle=0  & \text{on }   \partial\Omega\times(0,T],
\end{cases}
\end{equation}
where $\bm{n}$ stands for the outward normal vector on $\partial\Omega$. The Cauchy problem \eqref{eq:anis_diff} is a reference model for anisotropic PDE approaches for image restoration. The tensor $\Wbold_{\lambdabold,\theta}$ stands for space-dependent diffusivity matrix which can introduce non-linearities in the model \cite{Bro02} or classically related to a  structure-tensor modelling as in \cite{weickert98,Roussos2010,Scharr2003}.

% in Weickert's formalism inherits its eigenvectors from the so-called \emph{structure tensor} defined by
% $$
% J_\varrho(\nabla u) := K_\varrho \ast J_0(u),\quad\text{with}\quad J_0(u):= \nabla u_\varsigma \otimes \nabla u_\varsigma,
% $$
% where $u_\varsigma := K_\varsigma \ast u$ is a smoothed version of $u$ and $K_\varsigma,K_\varrho$ are Gaussian convolution kernels of parameters $\varsigma\ll \varrho$. The parameter $\varsigma$ is associated to the noise scale while $\rho$ integrates the orientation information. By exploiting the decomposition \eqref{eq:metric} for $\Mbold_{\lambdabold,\theta}$, one can in fact show via a straightforward calculation that having defined $\zbold := \nabla u_\varsigma/|\nabla u_\varsigma|$ and $\zbold^\perp$ consistently, there holds:
% \[
% \Wbold_{\lambdabold,\theta} = \lambda_1^2\left(\zbold\otimes\zbold\right) + \lambda_2^2 \left(\zbold^\perp \otimes \zbold^\perp\right),
% \]
% so that $\bm{\lambda}$ can be interpreted as an anisotropy vector whose components weight the contributions along the two directions $\bm{z}$ and $\bm{z}^\perp$.

Recently, a similar formalism has been employed also in \cite{BayramDTV2012,Zhang2013,KonDonKnuDTGV17,KonDonKnu17} in the case $p(x)=p=1$ for `dominant' fixed principal direction $\theta(x)=\bar{\theta}\in[0,2\pi)$ and adapted in \cite{Tovey2019,Ehrhardt2016} to local directionalities in the context of medical imaging. In such case the functional in \eqref{eq:energy_DTV_cont} reads:
\begin{equation}   \label{eq:energy_DTV_p=1}
\mathcal{DTV}_{1}^{\mathrm{sv}}(u) := \int_\Omega \left| \Mbold_{\lambdabold,\theta}(x) \nabla u(x)\right|~dx,
\end{equation}
which can be seen as a directional version of TV regularisation, and thereafter called DTV regularisation. In \cite{KonDonKnuDTGV17} a higher-order Directional Total Generalized Variation (DTGV) regulariser is also studied to promote smoother reconstructions and a full analysis in function spaces is performed. The choice of considering only the main direction $\bar{\theta}$ in the image restricts the authors to study very simple images with regular stripe patterns. For this purpose, an algorithm estimating such direction and some experiments on its robustness/sensitivity to noise are presented.

\subsection{Variable exponent $p(x)\geq 1$} In the isotropic case, variable exponent models have been considered, e.g., in \cite{BlomgrenTVp} under the modelling assumption $p(x)= p(|\nabla u(x)|)$ with:
\begin{equation}   \label{eq:exponent}
\lim_{s\to 0} ~p(s)=2, \qquad \lim_{s\to+\infty}~p(s)=1.
\end{equation}
Heuristically, such conditions correspond to consider a quadratic smoothing in correspondence of flat areas (small gradients) and a TV-type in correspondence with edges (large gradients). To overcome the difficulties arising from the theoretical analysis of such general modelling, in \cite{Chen2006TVp,Li2010} some easier variable exponent models have been considered. There, the image regulariser takes the following form:
\begin{equation}  \label{eq:var-exp}
\mathcal{R}_{p(\cdot)}(u):= \int_\Omega\frac{1}{p(x)}|\nabla u(x)|^{p(x)}~dx,
\end{equation}
where for every $x\in\Omega$ the exponent function $p:\Omega\to [1,2]$ is defined via the following explicit formula
\begin{equation}   \label{eq:var-exp1}
p(x) = 1 + \frac{1}{1+k|G_\varsigma\ast \nabla g(x)|},\quad \varsigma,~ k>0,
\end{equation}
where $G_\varsigma$ is a convolution kernel of parameter $\varsigma$ and $g$ is the given corrupted image. Under such choice, the conditions \eqref{eq:exponent} are satisfied and all the possible intermediate values are allowed.
In \cite{Chen2006TVp,Li2010} the regulariser \eqref{eq:var-exp}-\eqref{eq:var-exp1} is combined with L$^2$-fidelity and shown to reduce staircasing compared to constant exponent models.

%The study of the corresponding heat-type flows and the regularity properties of the solutions of these models require a fine analysis in suitable Sobolev spaces for which we refer the reader to \cite{Diening2011}.

\subsection{Non-convex models with constant exponents $0<p<1$} 
More recently, some non-convex image regularisation models in the form \eqref{eq:energy_DTV_cont} with constant exponent $0<p<1$ have been considered. In \cite{Hintermuller2013,Hintermuller2015}, for instance, non-convex TV$_p$-type regularisers have been shown to be indeed preferable for some applications in comparison to convex ($p\geq 1$) models as the ones described above. The analysis in function spaces covered by the authors is motivated by the use of discrete models such as the ones proposed previously in \cite{NikolovaNonCvx2010,Rodriguez2010}. For this type of regularisation and upon an appropriate Huber-type smoothing, efficient Trust-Region-Based optimisation solvers are designed.
%Differently from our approach, however, the authors deal with the case of constant exponent $p$.

Generally speaking, non-convex regularisers in the form \eqref{eq:energy_DTV_cont} with constant exponent $p<1$ are nowadays well-known to promote stronger sparsity in the data, improving significantly the reconstruction obtained in terms of structure preservation and edge sharpness. On the other hand, such methods may result in an over-complication of the problem in correspondence of homogeneous image regions, where a plain isotropic smoothing may still be preferable. For this reason, a space-variant image regulariser adapting its convexity to the local geometrical structures sounds desirable and appealing for imaging applications. 

\medskip

%In this work we combine space-variant convexity with local orientation dependence, thus making the regulariser considered extremely flexible for several imaging applications.

We stress that the fine analysis of \eqref{eq:energy_DTV_cont} in a functional setting becomes very challenging in the case $p<1$, since the extension of such regularising functionals to spaces similar to $BV(\Omega)$ is not trivial at all. For some theoretical considerations in this direction we refer the reader to \cite{Hintermuller2015}. To simplify the difficulties arising in such framework, our model is studied in a purely discrete setting. Its extension and analysis in a functional framework is left for future research.

\section{Statistical derivation via MAP estimation}
\label{sec:map}

A common statistical paradigm in image restoration is the MAP approach by which the restored image is obtained as a global minimiser of the negative log-likelihood distribution given the observed image $g$ and the known blurring operator $K$ combined with some prior probability on the unknown target image $u$, see, e.g., \cite{Huang1999,Zhu1998}. In formulas:
\begin{equation}
u^* \in \argmax_{u \in \mathbb{R}^n}\;
P(u|g;K) 
\;{=}\;
\argmin_{u \in \mathbb{R}^n} \;
\left\{ \,-\log P(g|u;K)-\log P(u)
\, \right\}.
\label{eq:map2}
\end{equation}
The equality above comes from the application of the Bayes' formula after dropping the normalisation term $P(g)$.
%The terms $P(g|u;K)$ and $P(u)$ in \eqref{eq:map2} are commonly referred to as the \emph{likelihood} and \emph{prior} distribution, respectively: they encode available information on the statistics of the noise and on the solution we seek, respectively.

In the case of AWGN the likelihood term in \eqref{eq:map2} takes the following special form
\begin{equation}
P(g|u;K)=\prod_{i=1}^{n}\,\frac{1}{\sqrt{2\pi}\sigma}\,\text{exp}\bigg(-\frac{(Ku-g)_{i}^{2}}{2\sigma^{2}}\,\bigg)=\frac{1}{W}\,\text{exp}\bigg(-\frac{\lVert Ku-g\rVert_{2}^{2}}{2\sigma^{2}}\,\bigg),
\label{eq:g_like}
\end{equation}
where $\sigma>0$ denotes the AWGN standard deviation and $W > 0$ is a normalisation constant.

As far as the unknown image $u$ is concerned, a standard choice consists in its modelling via a Markov Random Field (MRF) such that its prior $P(u)$ takes the form of a Gibbs prior, whose general form reads:
\begin{equation}
P(u)= \frac{1}{Z} \prod_{i = 1}^{n} \text{exp}\,(\,-\alpha \, V_{\mathcal{N}_{i}}(u)\,)=  \frac{1}{Z} \, \text{exp}\,\bigg(\,-\alpha\,\sum_{i = 1}^{n}  V_{\mathcal{N}_{i}}(u)\,\bigg),
\label{eq:mrf}
\end{equation}
where $\alpha>0$ is the MRF parameter and, for every $i=1,\ldots,n$, $\mathcal{N}_i$ denotes the set of all neighbouring pixels of $i$ (also known as `clique'), $V_{\mathcal{N}_{i}}$ stands for the potential function on $\mathcal{N}_{i}$ and $Z$ is the normalising partition function not depending on $u$. Such MRF modelling has been widely explored in the context of Bayesian models for imaging and combined in \cite{Roth2009} with learning strategies to design data-driven filters over extended neighbourhoods.

For more model-oriented approaches, by setting for any $i=1,\ldots,n$, $V_{\mathcal{N}_{i}}(u):=\lVert (\nabla u)_{i} \rVert_{2}$, the Gibbs prior in \eqref{eq:mrf} reduces to the TV prior:
$
P(u)= \frac{1}{Z} \,\text{exp}\,\left(-\alpha\,\sum_{i = 1}^{n} \lVert (\nabla u)_{i} \rVert_{2}\right)
$ 
%
% However, such a choice can be interpreted in a different way. Indeed, by recalling the expression of the space-invariant half-Laplacian (or exponential) distribution with \emph{scale parameter} $\alpha > 0$ which for any $q_i\in\mathbb{R}$ is defined as:
% %
% \begin{equation}
% P (q_i;\alpha) = \left\{
% \begin{array}{ll}
% \alpha \exp(-\alpha q_i) & \quad \mathrm{for} \quad q_i \geq 0,\\
% &\\
% 0          & \quad \mathrm{for} \quad q_i<0.
% \end{array} \right. ,
% \label{eq:HLD}
% \end{equation}
% %
% we note from \eqref{eq:tv} that under the choice $q_i := \lVert (\nabla u)_{i} \rVert_{2}$, each $q_i$ can be seen to be distributed according to 
or equivalently interpreted by saying that each $\|(\nabla u)_i\|_2$ is distributed according to
an half-Laplacian distribution with parameter $\alpha>0$. Via similar considerations, in \cite{tvpl2} the authors have shown how such one-parameter model is in fact too restrictive to describe the statistical distribution of the gradient in real images.

For this reason, we proceed differently and model the joint distribution of the two partial derivatives of the gradient vector $(\nabla u)_i$ at any pixel by a Bivariate Generalised Gaussian Distribution (BGGD) 
\cite{MGGD}. Namely, for all $i=1,\ldots,n$ we assume that 
\begin{equation}
P((\nabla u)_i;p_i,\Sigma_i)
=
\frac{1}{2 \pi |\Sigma_i|^{1/2}} \, \frac{p_i}{\Gamma(2/p_i) \, 2^{\:\!2/p_i}}
\: 
\text{exp}\left(-\frac{1}{2}((\nabla u)_i^{T}\Sigma_i^{-1}(\nabla u)_i)^{p_i/2}\right),
\label{eq:MGGDi}
\end{equation}
where $\Gamma$ stands for the Gamma function, the \emph{covariance matrices}  $\Sigma_i \in \mathbb{R}^{2 \times 2}$ are symmetric positive definite with determinant $|\Sigma_i|$ and $p_i/2$ is often referred to as \emph{shape parameter}. Note that when 
in \eqref{eq:MGGDi} $p_i=2$ for every $i=1,\ldots,n$, then the BGGD reduces to a standard bivariate Gaussian distribution with pixel-wise covariance matrices $\Sigma_i$.

Proceeding similarly as above, we can then deduce the expression of the corresponding prior under such assumption. It reads:
\begin{equation} \label{eq:propreg}
P(u)= \frac{1}{Z} \,\text{exp}\,
\bigg(\,
-\,\frac{1}{2} 
\sum_{i = 1}^{n} \left( (\nabla u)_{i}^T \Sigma_i^{-1} (\nabla u)_{i} \right)^{p_i/2} 
%\left( \right)\right)\alpha_{i} \lVert (\nabla u)_{i} \rVert_{2}^{p_{i}}  \,\bigg)=\frac{1}{Z} \, \text{exp}\bigg(-\, \text{TV}_{p,\alpha}^{\mathrm{sv}} 
% =
% \frac{1}{Z} \,\text{exp}\,
% \bigg(\,
% -\,\frac{1}{2} \, 
% \mathrm{DTV}_{p}^{\mathrm{sv}}(u)
 \,\bigg)
\end{equation} 
%.
The symmetric positive definite matrices $\Sigma_i$ contain information on both the directionality and the scale of the BGGD at pixel $i$.
To see that explicitly, we consider their eigenvalue decomposition:
\begin{equation}
\Sigma_i
\;{=}\;
V_i^T E_i V_i,
\quad
E_i \;{=}\; \begin{pmatrix}
{e^{(1)}}_i & 0\\
0 & e^{(2)}_i
\end{pmatrix}, \quad {e^{(1)}}_i \geq e^{(2)}_i > 0, \quad V_i^T\! V_i = V_i V_i^T = I \, , 
\label{eq:FF4}
\end{equation}
where for every $i=1,\ldots,n$, ${e^{(1)}}_i, e^{(2)}_i$ are the (positive) eigenvalues of $\Sigma_i$, $V_i$ is the orthonormal (rotation) modal matrix and $I$ denotes the $2 \times 2$ identity matrix. We then rewrite the terms in the sum appearing in \eqref{eq:propreg} as
\begin{equation}
\Big( \:\! (\nabla u)_{i}^T \Sigma_i^{-1} (\nabla u)_{i} \:\! \Big)^{\frac{p_{i}}{2}} \! = \: 
\Big( \:\! (\nabla u)_{i}^T V_i^T E_i^{-1} V_i (\nabla u)_{i} \:\! \Big)^{\frac{p_{i}}{2}} \! = \:
\left\| \, E_i^{-1/2} V_i \, (\nabla u)_i \right\|_2^{p_{i}} ,
\end{equation}
whence by setting 
\begin{equation}
\Lambda_i := E_i^{-1/2}, \quad R_{\theta_i}  := V_i,
\label{siginvdec}
\end{equation}
and after recalling the definition of the $\mathrm{DTV}_{p}^{\mathrm{sv}}$ regulariser given in \eqref{eq:PMa}-\eqref{eq:PMb2} we observe that the prior in  \eqref{eq:propreg} can indeed be expressed as:
\begin{equation} \label{eq:propreg2}
P(u)
=
\frac{1}{Z} \,\text{exp}\,
\bigg(\,
-\,\frac{1}{2} \, 
\mathrm{DTV}_{p}^{\mathrm{sv}}(u)
\,\bigg).
%= 
%\frac{1}{Z} \,\text{exp}\,
%\bigg(\,
%-\,\frac{1}{2} \,
%\sum_{i = 1}^{n} \left\| \Lambda_i R_{\theta_i} (\nabla u)_{i} \right\|_2^{p_i} 
%\left( \right)\right)\alpha_{i} \lVert (\nabla u)_{i} \rVert_{2}^{p_{i}}  \,\bigg)=\frac{1}{Z} \, \text{exp}\bigg(-\, \text{TV}_{p,\alpha}^{\mathrm{sv}} 
%\,\bigg).
\end{equation} 
%
%Note that for each point, the computation of $U_i$ amounts to compute the local orientation angle $\theta_i$.

By plugging the expression of the Gaussian likelihood \eqref{eq:g_like} and the BGGD prior \eqref{eq:propreg2}  in the MAP inference formula \eqref{eq:map2} and after dropping the constant terms, we finally obtain the DTV$_{p}^{\mathrm{sv}}$-L$_{2}$ image restoration model 
\eqref{eq:PMb} for blur and AWGN removal by setting $\mu=2/\sigma^{2}$. 
%Clearly, the same regularization can alternatively be combined with the $ \mathrm{L}_1$ data fitting so as to consider DTV$_{p}^{\mathrm{sv}}$-L$_{1}$ image reconstruction model.

%To make this explicit, we follow a standard argument considered, e.g. in \cite{} and pass from a discrete to a continuous version of the model in function spaces which will be specified in the following. We interpret in the following the pointwise values of functions evaluated in correspondence of the grid $\Omega_d:=\left\{1,\ldots,d_1\right\}\times\left\{1,\ldots,d_2\right\}$ as a sample of the values of the function defined on the whole image domain $\Omega$ so that, in a vectorized form, for any $i=1,\ldots,n$ we have that we can interpret the values of any function $h:\Omega_d\to\R$ as
%\[
%h_i = \int_\Omega h(y)\delta(i-y)dy
%\]
%where $\delta(i-y)$ is the standard Dirac measure which is equal to one whenever $i=y$ and null otherwise. 

\section{Automatic estimation of the DTV$_{p}^{\mathrm{sv}}$  parameters}   \label{sec:parameter_estimation}
The very high flexibility of the proposed space-variant anisotropic DTV$_{p}^{\mathrm{sv}}$ regulariser \eqref{eq:PMa}-\eqref{eq:PMb2} would be useless without an effective procedure for automatically and reliably estimating all its parameters from the observed corrupted data $g$. 
%The proposed regularization term \eqref{eq:PMb}-\eqref{eq:PMb2} is derived by assuming that the local gradients $x_i:=(\nabla u)_i$ at  each pixel $i=1,...,n$  are samples drawn from a  Multivariate Generalized Gaussian distribution (MGGD), whose general probability density function in $d$ dimensions is given below:
%\begin{equation}
%P(x;\Sigma,p)=\frac{\Gamma\big(\frac{d}{2}\big)}{\pi^{\frac{d}{2%}}\Gamma\big(\frac{d}{p}\big)2^{\frac{d}{p}}}\frac{p}{2|\Sigma|^%{\frac{1}{2}}}\times %\text{exp}\bigg(-\frac{1}{2}(x^{T}\Sigma^{-1}x)^{p/2}\bigg),
%\label{pdf1}
%\end{equation}
%where $p>0$ and $\Sigma \in \mathbb{R}^{d \times d}$ are referred to as the \textit{shape parameter} and the \textit{covariance matrix}, respectively.

In this section we propose a statistical optimisation strategy for the estimation of the covariance matrices $\Sigma_i$ and the parameters $p_i$ of the BGGD defined in \eqref{eq:MGGDi} when a collection of samples is available. Similar strategies estimating model parameters for directional regularisers from statistical priors have been proposed, e.g., in \cite{Peter2015} for anisotropic PDEs in the form \eqref{eq:anis_diff}. For simplicity, we will drop in the following the dependence on $i$ of the quantities appearing in \eqref{eq:MGGDi} and denote by $x:=\nabla u \in\mathbb{R}^2$ the local gradient of the image $u$. 
%Hence, in the following we are always considering the pdf in \eqref{pdf1} with $d=2$:
%\begin{equation}
%P(x;\Sigma,p)=\frac{1}{\pi\Gamma\big(\frac{2}{p}\big)2^{\frac{2}{p}}}\frac{p}{2|\Sigma|^{\frac{1}{2}}}\times \text{exp}\bigg(-\frac{1}{2}(x^{T}\Sigma^{-1}x)^{p/2}\bigg),
%\label{pdf2}
%\end{equation}
%\\
Firstly, we observe that the requirement for $\Sigma$ to be symmetric positive definite means:
\begin{equation}   \label{eq:Sigma_def}
\Sigma=\left[\begin{matrix}\sigma_{1} & \sigma_{3} \\ \sigma_{3} & \sigma_{2}\end{matrix}\right]
\qquad
\text{with}
\qquad
\begin{cases}
& \sigma_{1}>0\\
& |\Sigma|=\sigma_{1}\sigma_{2}-\sigma_{3}^2>0\\
\end{cases}
\end{equation}
As suggested in \cite{shape1,shape2,Pascal2013}, it is possible to decouple the spread and the directionality of the BGGD by introducing a further \emph{scale parameter} $m>0$, so that \eqref{eq:MGGDi} takes the following form:
\begin{equation}
P(x;p,\Sigma,m)=\frac{1}{\pi\Gamma\big(\frac{2}{p}\big)2^{\frac{2}{p}}}\frac{p}{2m|\Sigma|^{\frac{1}{2}}} \text{exp}\bigg(-\frac{1}{2m^{p/2}}(x^{T}\Sigma^{-1}x)^{p/2}\bigg),
\label{pdf3}
\end{equation}

By imposing that the trace of the covariance matrix $\Sigma$ is fixed and equal to the dimension of the ambient space, i.e. $\text{tr}(\Sigma)=d=2$, we easily get the following expression of the constraint set $\mathcal{C}$ for the parameters $p, m, \sigma_1,\sigma_2, \sigma_3$ to be well defined:
\begin{equation}
\mathcal{C}:=
\begin{cases}
p>0 \\
m>0\\
\sigma_{1}+\sigma_{2}=2 \\
\sigma_{1}\sigma_{2}-\sigma_{3}^2>0\\
\end{cases}
\qquad
\longrightarrow
\qquad
\mathcal{C}=
\begin{cases}
p>0 \\
m>0\\
\sigma_{1}^2+\sigma_{3}^2-2\sigma_{1}<0.\\
\end{cases}
\label{ct}
\end{equation}
%Note that the points in the $\sigma_{1}$-$\sigma_{3}$ plane must belong to the open circle centred in $(1,0)$. 
The set $\mathcal{C}$ is an open (unbounded) semi-cylinder in $\mathbb{R}^4$.  After a change of coordinates which shifts the centre of the circle in the $\sigma_1-\sigma_3$ plane to the origin, we obtain the following expression of $\Sigma^{-1}$
\begin{equation}
\tilde{\sigma}_{1} := 1 - \sigma_{1}
\quad
\longrightarrow
\quad
%\Sigma=\left[\begin{matrix}1-\tilde{\sigma}_{1} & \tilde{\sigma}_{3} \\ \tilde{\sigma}_{3} &1+\tilde{\sigma}_{1}\end{matrix}\right], \quad
\Sigma^{-1}=\frac{1}{1-\tilde{\sigma}_{1}^2-{\sigma}_{3}^2}\left[\begin{matrix}1+\tilde{\sigma}_{1} & -{\sigma}_{3} \\ -{\sigma}_{3} &1-\tilde{\sigma}_{1}\end{matrix}\right].
\label{mat_t}
\end{equation}
To avoid heavy notation, we will still denote in the following by $\sigma_1$ the same variable after this change of coordinates.

\subsection{ML estimation of the BGGD parameters}
For any point $\Omega$ let $\mathcal{N}(x)=\left\{x_{1},...,x_{N}\right\}\subset\mathbb{R}^{2\times N}$ denote the neighbourhood centred in $x$ of $N$ independent and identically distributed samples drawn from a BGGD with unknown parameters $ p,\sigma_1,\sigma_3, m \in \mathcal{C}$. Then, the corresponding \textit{likelihood} function reads:
\begin{eqnarray}
\mathcal{L}(p,\Sigma,m;x)  &=&\prod_{x_j\in\mathcal{N}(x)} P(x_{j};p,\Sigma,m)=\prod_{j=1}^{N} P(x_{j};p,\Sigma,m)\\ \nonumber
 &=&\Bigg[  \frac{1}{|\Sigma|^{1/2}} \frac{p}{2\pi \Gamma\big(\frac{2}{p}\big)2^{2/p}m}\Bigg]^{N} \text{exp}\bigg(-\frac{1}{2m^{p/2}}\sum_{j=1}^{N}(x_{j}^{T} \Sigma^{-1}x_{j})^{p/2}\bigg)
 %\\
 %\nonumber &=& \Bigg[  \frac{1}{|\Sigma|^{1/2}} \frac{1}{\pi \Gamma\big(\frac{2}{p} + 1\big)2^{2/p}m}\Bigg]^{N} \text{exp}\bigg(-\frac{1}{2m^{p/2}}\sum_{j=1}^{N}(x_{j}^{T} \Sigma^{-1}x_{j})^{p/2}\bigg).
\label{likelihood}
\end{eqnarray}
We now look for $(p^{*},\Sigma^{*},m^{*})\in\mathcal{C}$ maximising $\mathcal{L}$. Equivalently, by taking the negative logarithm, we aim to solve:
\begin{equation}
(p^{*},\Sigma^{*},m^{*})\longleftarrow \argmin_{p,\Sigma,m~\in~\mathcal{C}} ~\left\{ \mathcal{F}(p,\Sigma,m;x) :=-\log~\mathcal{L}(p,\Sigma,m;x)\right\},
\label{pb}
\end{equation}
whence, by recalling the fundamental property $\Gamma(z+1)=z\Gamma(z)$ for every $z\in\mathbb{R}$, we deduce:
%\begin{eqnarray}
%-\log~\mathcal{L}(p,\Sigma,m;x_{i})&=-N\log\bigg(\frac{p}{2}\bigg)+\frac{N}{2}\log|\Sigma|+N\log\Gamma\bigg(\frac{2}{p}\bigg)+N\log \pi\\
%\nonumber
%&+\frac{2N}{p}\log 2 +N\log m +\frac{1}{2m^{p/2}}\sum_{i=1}^{N}(x_{i}^{T}\Sigma^{-1}x_{i})^{p/2}.
%\label{log}
%\end{eqnarray}
\begin{align}
 \mathcal{F}(p,\Sigma,m;x) & = - \Bigg[ N\log \bigg(\frac{1}{|\Sigma|^{1/2}}\frac{1}{\pi \Gamma\big(\frac{2}{p} + 1\big)2^{2/p}m}\bigg) -\;\frac{1}{2m^{p/2}}\sum_{j=1}^{N}(\,x_{j}^{T}\,\Sigma^{-1}\,x_{j}\,)^{p/2}\Bigg] \notag \\
%\notag & = - \Bigg[ -  N\log \bigg(|\Sigma|^{1/2}\pi \Gamma\bigg(\frac{2}{p} + 1\bigg)2^{2/p}m\bigg) -\;\frac{1}{2m^{p/2}}\sum_{j=1}^{N}(\,x_{j}^{T}\,\Sigma^{-1}\,x_{j}\,)^{p/2}\Bigg]\\
& =  N\log \bigg(|\Sigma|^{1/2}\pi \Gamma\bigg(\frac{2}{p} + 1\bigg)2^{2/p}\bigg) + N \log m +\;\frac{1}{2m^{p/2}}\sum_{j=1}^{N}(\,x_{j}^{T}\,\Sigma^{-1}\,x_{j}\,)^{p/2}. \notag
\label{log}
\end{align}
Note that $\mathcal{F}$ is differentiable on $\mathcal{C}$. Therefore, by simply imposing the first order optimality condition for $m$, we can find a closed formula for $m^*$ as follows:
\begin{equation}  \label{eq:expr_m}
\frac{\partial \mathcal{F} }{\partial m}=\frac{N}{m}-\frac{p}{4m^{\frac{p}{2}+1}}\sum_{j=1}^{N}(x_{j}^{T}\Sigma^{-1}x_{j})^{p/2}\quad\longrightarrow \quad
m^*=\bigg(\frac{p}{4N}\sum_{j=1}^{N}(x_{j}^{T}\Sigma^{-1}x_{j})^{p/2}\bigg)^{\frac{2}{p}}.
\end{equation}
We now substitute this formula in the expression of $\mathcal{F}$, thus getting:
%\begin{eqnarray}
%\mathcal{F}(p,\sigma_{1},\sigma_{2},\sigma_{3})&=&N\log \Gamma \bigg(\frac{2}{p}+1\bigg)-\frac{2N}{p} \log \frac{2N}{p} -\frac{N}{2} \log |\Sigma|+\frac{2N}{p} +\\
%\nonumber
%&+& N\log \pi+\frac{2N}{p}\log\bigg(\sum_{i}(\sigma_{2}x_{i,1}^{2}+\sigma_{1}x_{i,2}^{2}-2\sigma_{3}x_{i,1}x_{i,2})^{p/2}\bigg).
%\label{fun_fin}
%\end{eqnarray}
\begin{equation}
\mathcal{F}(p,\Sigma;x) = N\log \bigg(|\Sigma|^{1/2}\pi \Gamma\bigg(\frac{2}{p} + 1\bigg)2^{2/p}\bigg) + \frac{2N}{p} \log \bigg(\frac{p}{4N}\sum_{j=1}^{N}(x_{j}^{T}\Sigma^{-1}x_{j})^{p/2}\bigg)+ \frac{2N}{p}.
\label{fun_semifin}
\end{equation}
By making explicit the dependence of $\mathcal{F}$ on the entries of $\Sigma$, we have that \eqref{fun_semifin} turns into:
\begin{eqnarray} \label{fun_fin}
\mathcal{F}(p,\sigma_{1},\sigma_{2},\sigma_{3};x)&=&
%N\log \bigg(|\Sigma|^{1/2}\pi \Gamma\bigg(\frac{2}{p} + 1\bigg)2^{2/p}\bigg)+ \frac{2N}{p} \\
%\nonumber &+&\frac{2N}{p} \log \bigg(\frac{p}{4N}\sum_{j=1}^{N}\frac{1}{|\Sigma|^{p/2}}(\sigma_{2}x_{j,1}^{2}+\sigma_{1}x_{j,2}^{2}-2\sigma_{3}x_{j,1}x_{j,2})^{p/2}\bigg)\\
%\nonumber &=&N\log \bigg(|\Sigma|^{1/2}\pi \Gamma\bigg(\frac{2}{p} + 1\bigg)2^{2/p}\bigg)+ \frac{2N}{p} + \frac{2N}{p}\log \frac{p}{4N} + \\
%\nonumber &-& N \log |\Sigma| + \frac{2N}{p}\log\bigg(\sum_{j=1}^N(\sigma_{2}x_{j,1}^{2}+\sigma_{1}x_{j,2}^{2}-2\sigma_{3}x_{j,1}x_{j,2})^{p/2}\bigg)\\
N\log \bigg(\frac{1}{|\Sigma|^{1/2}}\pi \Gamma\bigg(\frac{2}{p} + 1\bigg)2^{2/p}\bigg)+ \frac{2N}{p} + \frac{2N}{p}\log \frac{p}{4N} \\
\nonumber &+& \frac{2N}{p}\log\bigg(\sum_{j=1}^N(\sigma_{2}x_{j,1}^{2}+\sigma_{1}x_{j,2}^{2}-2\sigma_{3}x_{j,1}x_{j,2})^{p/2}\bigg).
\end{eqnarray}

We now study the behaviour of $\mathcal{F}$ expressed as above as $(p,\sigma_{1},\sigma_{2},\sigma_{3})$ approach the boundary of the set $\mathcal{C}$ defined in \eqref{ct}. Thanks to the formula for $m^*$ derived in \eqref{eq:expr_m}, we start noticing that $\mathcal{C}$ can be expressed in fact as a subset in $\mathbb{R}^3$ defined by the variables $p, \sigma_1$ and $\sigma_3$ only. By further switching to polar coordinates in the $\sigma_{1}-\sigma_{3}$ plane, we get:
\begin{equation}  \label{eq:polar_coordinate}
(\sigma_{1},\sigma_{3})=\varrho(\cos \phi, \sin \phi), \quad 0\leq \varrho<1, \quad\phi\in [0,2\pi),
\end{equation}
so that the matrices in \eqref{mat_t} take the following form:
\begin{equation}
\Sigma=\left[\begin{matrix}1-\varrho\cos \phi & \varrho\sin \phi \\ \varrho\sin \phi &1+\varrho\cos \phi\end{matrix}\right], \qquad
\Sigma^{-1}=\frac{1}{1-\varrho^2}\left[\begin{matrix}1+\varrho\cos \phi & -\varrho\sin \phi \\ -\varrho\sin \phi &1-\varrho\cos \phi\end{matrix}\right],
\label{mat}
\end{equation}
and the functional $\mathcal{F}$ in \eqref{fun_fin} becomes: 
%\begin{eqnarray}
%\mathcal{F}(p,\phi,\varrho)&=&N\log \Gamma\bigg(\frac{2}{p}+1\bigg)-\frac{N}{2}\log(1-\varrho^2)+N\log \pi-\frac{2N}{p}\log \frac{2N}{p}+\frac{2N}{p}+  \notag\\
%&+&\frac{2N}{p} \log \Bigg[\sum_{i=1}^{N}((1+\varrho\cos\phi)x_{i,1}^2+(1-\varrho\cos\phi)x_{i,2}^2-2K\sin \phi~ x_{i,1}x_{i,2})^{p/2}\Bigg].
%\label{eq:funct_unbound}
%\end{eqnarray}
\begin{eqnarray}
\mathcal{F}(p,\phi,\varrho;x)&=&N\log\bigg( \Gamma\bigg(\frac{2}{p}+1\bigg)\frac{\pi}{\sqrt{1 - \varrho^2}}\bigg(\frac{p}{2N}\bigg)^{2/p}\,\bigg)+\frac{2N}{p}+\frac{2N}{p}\log \frac{p}{4N}\notag\\
&+&\frac{2N}{p} \log \Bigg[\sum_{j=1}^{N}((1+\varrho\cos\phi)x_{j,1}^2+(1-\varrho\cos\phi)x_{j,2}^2-2\varrho\sin \phi~ x_{j,1}x_{j,2})^{p/2}\Bigg].
\label{eq:funct_unbound}
\end{eqnarray}
As a conclusion, we can finally rewrite the ML problem \eqref{pb} as the following constrained optimisation problem
\begin{equation}
(p^{*},\phi^{*},\varrho^{*})\in \argmin_{\substack{p\in(0,\infty),\\
\phi\in[0,2\pi),\\
\varrho\in[0,1)}}~ \mathcal{F}(p,\phi,\varrho).   \label{eq:min_prob_unbb}
\end{equation}
Note, that since the problem \eqref{eq:min_prob_unbb} is formulated over a non-compact set of $\R^3$, the existence of its solution is in general not guaranteed.

\subsection{Reformulation on a compact set}  \label{sec:compact}

One possible way to overcome this problem consists in characterising explicitly the configurations of the samples $x_1,\ldots,x_N\in\mathcal{N}(x)$ for which the functional $\mathcal{F}$ in \eqref{eq:funct_unbound} does not attain its minimum inside $\mathcal{C}$. To do so, let us first rename the last term in \eqref{eq:funct_unbound} as:
\begin{equation}
A(\phi,\varrho):=\frac{2N}{p} \log \Bigg[\sum_{i=j}^{N}((1+\varrho\cos\phi)x_{j,1}^2+(1-\varrho\cos \phi)x_{j,2}^2-2\varrho\sin \phi~ x_{j,1}x_{j,2})^{p/2}\Bigg].
\end{equation}
For any $p\in (0,+\infty)$, if $A(\phi,\varrho)$ is bounded as $\varrho\to 1^-$, then the functional $\mathcal{F}$ in \eqref{eq:funct_unbound} tends to $+\infty$ and the minimum is necessarily attained in the interior of $\mathcal{C}$.
However, if $A(\phi,\varrho)$ is unbounded as $\varrho\to 1^-$, nothing can be said about the behaviour of $\mathcal{F}$ at the boundary and, as a consequence, nothing can be said about its minima. In particular, in this situation there may exist one or multiple configurations of the samples $x_1,\ldots,x_N\in\mathcal{N}(x)$ for which $\mathcal{F}$ tends to $-\infty$ at the boundary.
In order to characterise such configurations, note that as $\varrho\to 1^-$ we have that by continuity:
\begin{equation}
A(\phi,\varrho)\to \frac{2N}{p} \log \Bigg[\sum_{j=1}^{N} (\sqrt{1+\cos \phi}~x_{j,1}-\sqrt{1-\cos \phi}~x_{j,2})^{p}\Bigg],
\end{equation}
which tends to $-\infty$ if and only if the argument of the logarithm tends to zero, i.e. when
\begin{equation}  \label{eq:configurations}
x_{j,2}=\sqrt\frac{\sqrt{1+\cos \phi}}{\sqrt{1-\cos \phi}}~x_{j,1},\qquad \forall j=1,...,N.
\end{equation}
This situation corresponds to the case when the samples $x_{j}$ lie all on the line passing through the origin with slope $\sqrt\frac{\sqrt{1+\cos \phi}}{\sqrt{1-\cos \phi}}$. 
%In such a case the BGGD behaves like a Dirac delta distribution.

\medskip

A possible way to guarantee the existence of solutions of the problem \eqref{eq:min_prob_unbb} is to re-formulate the problem over a compact subset of $\R^3$. Although this may sound a little bit artificial, note that for imaging applications such assumption makes perfect sense for different reasons. Firstly, as far as the range for the parameter $\varrho$ is concerned, note that the degenerate configurations \eqref{eq:configurations} happening as $\varrho$ approaches $1^-$ are easily detectable in a pre-processing step and, in practice, very unlikely for natural images since they would correspond to situations where gradient components are linearly correlated for any sample $j=1,\ldots,N$. Therefore, provided we can perform such preliminary check, the case $\varrho=1$ becomes admissible since no other possible configurations are allowed under this choice.

Regarding the admissible values for $p$, we notice that the more we enforce sparsity (i.e. the closer $p$ is to zero), the more the BGGD will tend to a Dirac delta distribution, making the estimation of local anisotropy in a neighbourhood of the point considered very hard (see Section \ref{sec:parameter_estimation} for more details). Additionally, as it is commonly done in previous work for variable exponent models for imaging, an upper bound for such values -- typically chosen as $\bar{p}\geq 2$ -- can be fixed. Therefore, in practice, we can fix lower and upper bounds $0<\varepsilon<\bar{p}$ for the exponent range.

%In the following numerical experiments we will chose $L_p$ so that $L_p>0.1$ and $U_p\geq 2$.

After these observation, we can then reformulate the problem \eqref{eq:min_prob_unbb} as follows
\begin{eqnarray}\label{minpbpar}
(p^{*},\phi^{*},\varrho^{*})&\longleftarrow&\argmin_{p,\theta,\varrho} \mathcal{F}(p,\phi,\varrho;x)\\
\notag&&\\
\notag &\textrm{s.t.}& p\in[\varepsilon,\bar{p}],\;\;0 \leq \varrho \leq 1,\;\;0 \leq \phi \leq 2\pi,
\end{eqnarray}
where now the constraint set is compact, which, combined with the continuity of $\mathcal{F}$, guarantees that the minimisation problem admits a minimum.

\smallskip

Before carrying on with our discussion, we recall once again that the ML procedure described above is \emph{local}, i.e. it has to be repeated for any pixel in the image domain, thus resulting in the estimation of the parameter map $(p^*_i,\phi^*_i,\varrho^*_i)$ for $i=1,\ldots,n$.
\\For each pixel $i=1,\ldots,n$, the triple of estimated parameters is involved in the computation of the matrices $\Lambda_i,R_{\theta_i}$ defining the regulariser in \eqref{eq:PMa}.
Relying on \eqref{mat}, the eigenvalues $e_i^{(1)},e_i^{(2)}$ can be easily computed. Observe that, due to the normalisation condition on the trace introduced in \eqref{ct}, the minimum eigenvalue $e_i^{(2)}$ can be directly derived by the maximum eigenvalue $e_i^{(1)}$:
\begin{equation}
e_i^{(1)} = 1 + \varrho_i, \quad e_i^{(2)} = 2 - e_i^{(1)} = 1 -\varrho_i.
\end{equation}
Therefore, recalling \eqref{siginvdec}, the matrix $\Lambda_i$ is obtained as follows:
\begin{equation}  \label{def:lambda_e1}
\Lambda_i := \begin{pmatrix}
\lambda_i^{(1)} & 0 \\ 0 & \lambda_i^{(2)}
\end{pmatrix}=\begin{pmatrix}
\frac{1}{\sqrt{e_i^{(1)}}} & 0 \\ 0 & \frac{1}{\sqrt{e_i^{(2)}}}
\end{pmatrix}.
\label{lambdai}
\end{equation}
Once $e_i^{(1)}$ is available, its corresponding eigenvector  $(v_1)_i$, satisfying $\Sigma_i (v_1)_i = {e^{(1)}}_i (v_1)_i$ , can be further calculated using the formula
\begin{equation}
(v_1)_i = \sqrt{\frac{1 + \cos \phi_i}{2}}\begin{bmatrix}
\frac{\sin \phi_i}{\sqrt{1 + \cos \phi_i}}\\
1\\
\end{bmatrix}.
\end{equation}
As a consequence, the local angle $\theta_i$ describing the local orientation is computed by
\begin{equation}
\theta_i = \arctan{\frac{\sqrt{1 + \cos \phi_i}}{\sin \phi_i}},
\label{arctanth}
\end{equation}
and the rotation matrix $R_{\theta_i}$ is given as in \eqref{eq:PMb2}.
% \begin{equation} 
% R_{\theta_i} := \begin{pmatrix}
% \cos\theta_i & -\sin\theta_i\\
% \sin\theta_i & \cos\theta_i
% \end{pmatrix}.
% \label{rthi}
% \end{equation}

%The aim of this subsection is to make a distinction between \textit{indirect} and \textit{direct} parameters, the former being the estimated parameters not directly giving information on the shape and on the orientation of the resulting BGGD, the latter expressing  property of the estimated distribution by themselves.\\
%For each pixel $i=1,\dots,n$, the output of the estimation procedure is the triple $(p_i,\phi_i,\varrho_i)$, where $p_i$ is the shape parameter of the estimated BGGD in a neighborhood of pixel $i$, and can be listed as a direct parameter, while $\phi_i,\varrho_i$ can be classified as indirect parameters. In fact, they are involved in the computation of the scatter matrix $\Sigma_i$, whose expression is given in \eqref{mat}. 
%In the following Section \ref{sec:parameter_estimation_results} we will apply the ML estimation procedure described above to visualise the estimated BGGD on some synthetic and natural imaging example.

%Once the parameters $(p^*_i,\phi^*_i,\varrho^*_i)$ are computed for any $i=1,\ldots,n$ by solving \eqref{minpbpar}, 

Furthermore, it is helpful to represent the estimated BGGD to visualise its shape in the plane $((D_h u)_i,(D_v u)_i)$. %First, notice that from \eqref{mat} having such parameters at hand, an expression of the covariance matrix $\Sigma_i$ is available. Then, by recalling the decomposition in \eqref{eq:FF4}, 
In order to draw the corresponding level curves, we only need the maximum eigenvalue $e_i^{(1)}$ and the rotation angle $\theta_i$. 
Such curves are the ellipses having semi-axes $a_i$, $b_i$, and eccentricity $\epsilon_i$ given by:
\begin{equation}
a_i := \sqrt{{e^{(1)}}_i},\quad b_i := \sqrt{e^{(2)}_i},\quad \epsilon_i := \frac{\sqrt{{a_i}^2 - {b_i}^2}}{a_i} = \frac{\sqrt{{e^{(1)}}_i - e^{(2)}_i}}{\sqrt{{e^{(1)}}_i}}. 
\label{axis}
\end{equation}
% and eccentricity given by
% \begin{equation}
% \epsilon_i = \frac{\sqrt{{a_i}^2 - {b_i}^2}}{a_i} = \frac{\sqrt{{e^{(1)}}_i - e^{(2)}_i}}{\sqrt{{e^{(1)}}_i}}.
% \label{ecc}
% \end{equation}
%Note that the eigenvalues can be expressed in terms of the parameter $\varrho*_i$ as:
%\begin{equation}
%({e^{(1)}})_i = 1 + \varrho^*_i,\quad
%(e^{(2)})_i = 1 - \varrho^*_i.
%\label{eig}
%\end{equation}
%Observe that, due to the trace normalization introduced in \eqref{ct}, $(e^{(2)})_i$ can be straightforwardly obtained from $({e^{(1)}})_i$. From \eqref{ecc}-\eqref{eig}, it holds that, for each pixel $i=1,...,n$, we can refer to $({e^{(1)}})_i$ as the direct parameter encoding the information about $\mathcal{E}_i$.
%\\ \noindent The orthogonality of the eigenvectors ensures that the orientation of the elliptic level curve can be derived starting from 

%\red{scrivere meglio, da dove viene la formulat sopra? vale la pena scrivere la formula esplicita di calcolo dell'angolo?}.
An illustrative drawing of the anisotropy ellipses described above is reported in Figure \ref{fig:ellipses}.

\begin{figure}
    \centering
    \includegraphics[height=5cm]{./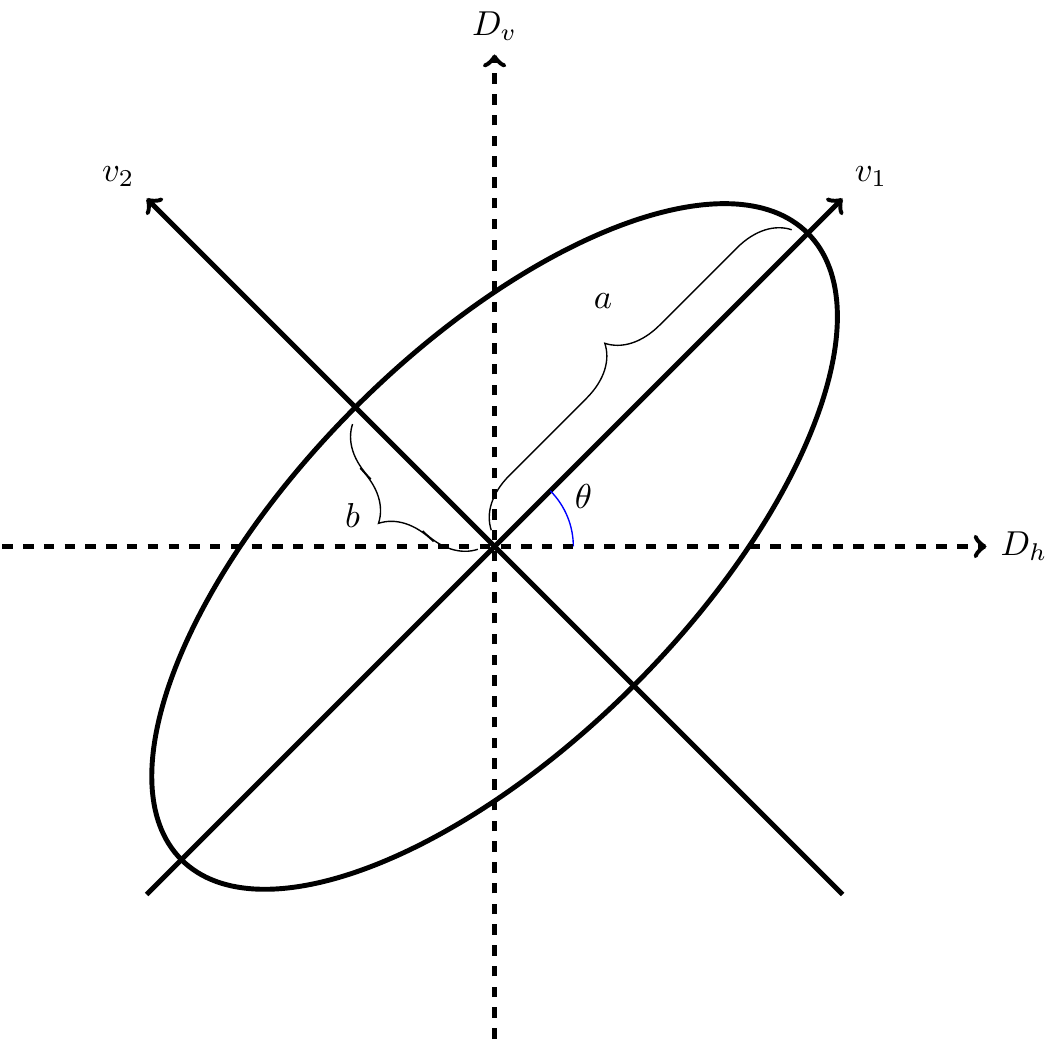}
    \caption{Representation of anisotropy ellipses describing BGGD level lines in the plane $D_h-D_v$ in terms of the eigenvalues and eigenvectors of the estimated matrix $\Sigma$.}
    \label{fig:ellipses}
\end{figure}

%In Section \ref{sec:reconstruction}, the estimated parameter maps will be then used as inputs for the design of the DTV$_p^\mathrm{sv}$ regulariser which will be applied to solve several image reconstruction problems.

\section{Existence of solutions}  
\label{sec:exist} 

In this section, we provide an existence result for the solutions of the proposed DTV$_p^\mathrm{sv}$-L$_2$ variational model \eqref{eq:PMa}-\eqref{eq:PMb}. In general, the DTV$_p^\mathrm{sv}$-L$_2$ functional is not convex, therefore it is not guaranteed to admit a unique global minimiser. However, by applying a general lemma whose proof can be found in \cite[Lemma 2.7.1]{Ciak2015} we will prove that existence of global minimisers is guaranteed. In the following, we will use the notations $\mathrm{null}(M)$, $\mathrm{span}(v_1,\ldots,v_m)$, $I_m$, $\bm{0}_m$ and $\bm{1}_m$ to denote the null space of the linear operator $M$, the linear span of the set of vectors $v_1,\ldots,v_m$, the identity matrix of order $m$ and the all-zeros and all-ones $m$-dimensional vectors, respectively. We have that the following Lemma holds true \cite{Ciak2015}.

\begin{lemma}
\label{lem:sumcoerc}
Let $A_1 \in \R^{m \times n}$, $A_2 \in \R^{q \times n}$ be two linear operators satisfying
\begin{equation}
\mathrm{null}(A_1) \,\;{\cap}\;\, \mathrm{null}(A_2) \; = \; \{ \bm{0}_n \} \, ,
\label{eq:nullinters}
\end{equation}
and let $f_1: \R^m \to [-\infty, +\infty]$ and $f_2: \R^q \to [-\infty, +\infty]$ be two proper, lower semicontinuous and coercive functions.
Then, the function $h: \R^n \to [-\infty, +\infty]$ defined by
\begin{equation}
h(x) := f_1(A_1 x) + f_2(A_2 x)
\end{equation}
is lower semicontinuous and coercive.
\end{lemma}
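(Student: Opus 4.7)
The plan is to split the proof into the two claimed properties, lower semicontinuity and coercivity, and handle each separately.

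For lower semicontinuity, I would first note that since $A_1$ and $A_2$ are linear maps on finite-dimensional spaces, they are continuous. Thus each composition $f_i \circ A_i$ is lower semicontinuous as the composition of a continuous map with a lsc function. To take the sum $(f_1 \circ A_1) + (f_2 \circ A_2)$ and conclude it is lsc, I must first verify it is well-defined pointwise, i.e.\ avoid the $(+\infty) + (-\infty)$ ambiguity. Here I would invoke that a proper, lsc, coercive function on $\R^k$ attains its infimum (its sublevel sets are closed and bounded, hence compact), so each $f_i$ is bounded below by a finite constant. In particular neither $f_i$ takes the value $-\infty$, so the pointwise sum is well-defined, and it is then routine that the sum of two lsc functions bounded below is lsc.

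For coercivity, the key idea is to package $A_1$ and $A_2$ into a single operator $A:\R^n\to\R^{m+q}$ defined by $A x := (A_1 x,\, A_2 x)$. The hypothesis $\mathrm{null}(A_1)\cap\mathrm{null}(A_2)=\{\bm{0}_n\}$ is exactly the statement that $A$ is injective. Since $A$ is a linear injection between finite-dimensional normed spaces, it is bounded below: there exists $c>0$ such that $\|A x\|\ge c\|x\|$ for all $x\in\R^n$. (This can be proved by restricting $\|A\cdot\|$ to the unit sphere, which is compact, and noting that it is continuous and strictly positive there.) Equivalently, with the product norm, $\|A_1 x\|+\|A_2 x\|\ge c\|x\|$.

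Now to see $h$ is coercive, consider any sequence $\{x_k\}\subset\R^n$ with $\|x_k\|\to\infty$. By the bound just derived, at least one of $\|A_1 x_k\|$ or $\|A_2 x_k\|$ must tend to $+\infty$ along a subsequence. Coercivity of the corresponding $f_i$ then forces $f_i(A_i x_k)\to +\infty$ along that subsequence, while the other term remains bounded below by the constant from the first part. Hence $h(x_k)\to+\infty$, which gives coercivity of $h$. Since this argument applies to every subsequence of every divergent sequence, $h(x_k)\to+\infty$ whenever $\|x_k\|\to\infty$, completing the proof.

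The only mildly delicate step is the lower bound $\|A x\|\ge c\|x\|$, which is where the finite-dimensional setting and the null-space intersection assumption enter jointly; everything else is bookkeeping around lsc and the standard fact that proper coercive lsc functions on $\R^k$ are bounded below.
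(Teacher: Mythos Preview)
Your proof is correct. The paper itself does not give a proof of this lemma; it merely states it and cites \cite[Lemma~2.7.1]{Ciak2015} for the argument, so there is nothing to compare your approach against on the paper's side.

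A small remark on presentation: your coercivity argument via ``every subsequence has a further subsequence along which $h\to+\infty$'' is valid, but it can be streamlined by arguing contrapositively. If $h(x_k)\not\to+\infty$, pass to a subsequence with $h(x_{k_j})\le M$; since each $f_i$ is bounded below by some $m_i$, this forces $f_i(A_i x_{k_j})\le M-m_{3-i}$ for $i=1,2$, and coercivity of $f_i$ then bounds $\|A_i x_{k_j}\|$. This contradicts $\|A_1 x_{k_j}\|+\|A_2 x_{k_j}\|\ge c\|x_{k_j}\|\to\infty$. This avoids the subsequence-of-subsequence bookkeeping, but your version is fine as written.
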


We now apply this result to the DTV$_p^\mathrm{sv}$-L$_2$  model.

\begin{proposition}
The $\mathrm{DTV}_p^\mathrm{sv}$-$\mathrm{L}_2$ functional  $\mathcal{J}: \R^n \to \R$ defined in \eqref{eq:PMa}-\eqref{eq:PMb} is continuous, bounded from below by zero and coercive, hence it admits global minimisers.
\end{proposition}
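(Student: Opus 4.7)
The plan is to verify the three stated properties—continuity, the nonnegative lower bound, and coercivity—and then invoke the standard Weierstrass argument for existence. The first two are essentially by inspection; the real content is coercivity, for which I would set up a splitting of $\mathcal{J}$ that lets me invoke Lemma~\ref{lem:sumcoerc} directly.

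For continuity and the lower bound: the discrete gradient $u\mapsto(\nabla u)_i$ is linear, so $u\mapsto\Lambda_i R_{\theta_i}(\nabla u)_i$ is linear (hence continuous) for each $i$. Composing with the continuous map $y\mapsto\|y\|_2^{p_i}$ on $\R^2$, which is defined and continuous for any $p_i>0$, and summing over $i=1,\ldots,n$, the regulariser $\mathrm{DTV}_{p}^{\mathrm{sv}}$ is continuous on $\R^n$; the L$_2$ data fidelity is obviously continuous. Nonnegativity of both summands immediately gives $\mathcal{J}(u)\geq 0$.

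For coercivity, I would write $\mathcal{J}(u)=f_1(A_1 u)+f_2(A_2 u)$ with $A_1:=\nabla$ viewed as a linear map $\R^n\to\R^{2n}$, $A_2:=K$, and
\[
f_1(y) := \sum_{i=1}^n \|\Lambda_i R_{\theta_i} y_i\|_2^{p_i}, \qquad f_2(z) := \tfrac{\mu}{2}\|z-g\|_2^2,
\]
where $y=(y_1,\ldots,y_n)$ with $y_i\in\R^2$. Both functions are proper and continuous, hence lower semicontinuous. Coercivity of the quadratic $f_2$ is standard. For $f_1$ I would use that each factor $\Lambda_i R_{\theta_i}$ is invertible (since $R_{\theta_i}$ is a rotation and $\lambda_i^{(1)},\lambda_i^{(2)}>0$), so $\|\Lambda_i R_{\theta_i} y_i\|_2 \geq c_i\|y_i\|_2$ for some $c_i>0$, and $t\mapsto t^{p_i}\to+\infty$ as $t\to+\infty$ for every $p_i>0$; thus $f_1(y)\to +\infty$ as $\|y\|\to\infty$.

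The one nontrivial point, which I expect to be the main obstacle, is verifying the null-space intersection hypothesis \eqref{eq:nullinters}. Here $\mathrm{null}(\nabla)=\mathrm{span}(\bm{1}_n)$ consists of the constant images, so the condition reduces to the (standing, and physically natural) requirement that the blurring operator $K$ does not annihilate constants, i.e.\ $K\bm{1}_n\neq\bm{0}_n$—true for any normalised blur kernel. Under this, Lemma~\ref{lem:sumcoerc} delivers the coercivity of $\mathcal{J}$, and a standard Weierstrass argument (sublevel sets of $\mathcal{J}$ are bounded by coercivity, hence precompact in $\R^n$; a minimising sequence admits a convergent subsequence whose limit attains the infimum by continuity) concludes the existence of global minimisers.
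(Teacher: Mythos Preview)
Your proof is correct and follows essentially the same route as the paper: split $\mathcal{J}=f_1(A_1 u)+f_2(A_2 u)$, check coercivity of the pieces, verify $\mathrm{null}(\nabla)\cap\mathrm{null}(K)=\{\bm{0}_n\}$ via the fact that blur kernels do not annihilate constants, and invoke Lemma~\ref{lem:sumcoerc}. The only cosmetic difference is that the paper absorbs the invertible factors $\Lambda_i R_{\theta_i}$ into $A_1$ (writing $A_1=LD$ with $L$ block-diagonal), which makes coercivity of $f_1$ immediate and shifts the use of invertibility to the identity $\mathrm{null}(LD)=\mathrm{null}(D)$, whereas you leave them inside $f_1$ and use invertibility to obtain the lower bound $\|\Lambda_i R_{\theta_i} y_i\|_2\ge c_i\|y_i\|_2$; the content is the same.
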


\begin{proof}
Let $A_1 \in \R^{2n \times n}$ be the matrix defined by
\begin{equation}
A_1 = L D, \quad L = \diag \left( L_1,L_2,\ldots,L_n \right), \quad L_i = \Lambda_i R_{\theta_i} \in \R^2, \;\: i =1,\ldots,n,
\label{eq:AA}
\end{equation}
with $\Lambda_i,R_{\theta_i} \in \R^2$ the full rank matrices in \eqref{eq:PMb2} and $D \in \R^{2n \times n}$ a finite difference operator discretising the image gradient, let $A_2 = K$, and let $f_1: \R^{2n} \to \R$, $f_2: \R^n \to \R$ be the functions defined by
\begin{equation}
\begin{array}{rcll}
f_1(y) &\!\!{:=}\!\!& 
\sum_{i=1}^n \left\| ( y_{2i-1} , y_{2i} ) \right\|_2^{p_i}, &
\quad y \in \R^{2n},\vspace{0.13cm}\\
f_2(z) &\!\!{:=}\!\!& \displaystyle{\frac{\mu}{2} \left\| z - g \right\|_2^2,} & 
\quad z \in \R^{n}.
\end{array}
\label{eq:fandg}
\end{equation}
Then, the DTV$_p^\mathrm{sv}$-L$_2$ energy functional in \eqref{eq:PMa}-\eqref{eq:PMb} can be written as
\begin{equation} \label{eq:cost_functional}
\mathcal{J}(u)= f_1(A_1 u) + f_2(A_2 u). 
\end{equation}
As the block diagonal matrix $L$ in \eqref{eq:AA} has full rank (all matrices $L_i$ have full rank), the linear operator $A_1$ has the same null space as the discrete gradient operator $D$. 
It follows that  
\begin{equation}
\big( \mathrm{null}(A_1) = \mathrm{null}(D) = \mathrm{span}(\bm{1}_n) \big)
\,\;{\cap}\;\, 
\big( \mathrm{null}(A_2) = \mathrm{null}(K) \big) \; = \; \{ \bm{0_n} \} \, ,
\label{eq:AKnullinters}
\end{equation}
in fact constant images do not belong to the null space of the linear blur operator $K$.
%matrix\red{where the matrices $A$ and $K$ represent a high-pass and a low-pass filter, respectively, hence they satisfy condition (\ref{eq:nullinters})}. 
Furthermore, functions $f_1$ and $f_2$ in \eqref{eq:fandg} are clearly continuous, bounded from below by zero and coercive. 
It thus follows from Lemma \ref{lem:sumcoerc} that the DTV$_p^\mathrm{sv}$-L$_2$ functional $\mathcal{J}$ in \eqref{eq:cost_functional} is continuous, bounded from below by zero and coercive, hence it admits at least one global minimiser.
\end{proof}

Uniqueness of solutions is in general not guaranteed. However, if the functional is strictly convex, this trivially holds.

\begin{corollary}
Let $\mathcal{J}:\R^n\to\R$ be the $\mathrm{DTV}_p^\mathrm{sv}$-$\mathrm{L}_2$ functional defined in \eqref{eq:PMa}-\eqref{eq:PMb}. If $p_i > 1$ for every $i = 1,\ldots,n$, then  $\mathcal{J}$ is strongly convex. Hence it admits a unique global minimiser.
\end{corollary}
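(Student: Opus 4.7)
The plan is to upgrade the existence argument to strong convexity by combining (i) convexity of the $\mathrm{DTV}_p^{\mathrm{sv}}$ regulariser for $p_i\geq 1$ with (ii) strong convexity of the quadratic data fidelity, leveraging the decomposition $\mathcal{J}(u)=f_1(A_1u)+f_2(A_2u)$ already introduced in the existence proof. The key observation is that strong convexity of a sum is inherited from strong convexity of any one summand plus convexity of the others, so the fidelity term will supply the quadratic modulus of convexity while the regulariser need only be convex.

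First, I would verify convexity of the regulariser. For each $i$, the map $v\mapsto\|v\|_2^{p_i}$ on $\R^2$ is convex whenever $p_i\geq 1$, as the composition of the nondecreasing convex function $t\mapsto t^{p_i}$ on $[0,\infty)$ with the norm $\|\cdot\|_2$. Pre-composition with the full-rank linear map $\Lambda_iR_{\theta_i}$ preserves convexity, and summing over $i$ followed by composition with the discrete gradient $D$ (a linear operator) yields convexity of $u\mapsto f_1(A_1 u)=\mathrm{DTV}_p^{\mathrm{sv}}(u)$ on $\R^n$. Hypothesis $p_i>1$ actually gives strict convexity of each $\|\cdot\|_2^{p_i}$, which will eventually be used to upgrade to strict (and strong) convexity of $\mathcal{J}$.

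Next, I would analyse the data fidelity $f_2(A_2 u)=\frac{\mu}{2}\|Ku-g\|_2^2$. Its Hessian $\mu K^T\! K$ is positive semidefinite, and under the standing assumption (inherited from the existence result) that $K$ is a non-degenerate blur operator with trivial null space, we have $\lambda_{\min}(K^T\! K)>0$, so $u\mapsto\frac{\mu}{2}\|Ku-g\|_2^2$ is $\mu\lambda_{\min}(K^T\! K)$-strongly convex. Since the sum of a convex function and a strongly convex function is strongly convex with the same modulus, $\mathcal{J}$ is $\mu\lambda_{\min}(K^T\! K)$-strongly convex on $\R^n$. Strong convexity immediately gives coercivity and strict convexity, so the global minimiser produced by the preceding proposition is unique.

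The main obstacle lies in the strong convexity claim when $K$ is not injective: the null-space condition $\mathrm{null}(A_1)\cap\mathrm{null}(K)=\{\bm{0}_n\}$ used for existence is weaker than injectivity of $K$, and if $\mathrm{null}(K)\neq\{\bm{0}_n\}$ the fidelity fails to be strongly convex, while the regulariser is not strongly convex either (its modulus of convexity along $\|(\nabla u)_i\|_2\to 0$ degenerates, especially when some $p_i>2$). In that setting only strict convexity of $\mathcal{J}$ can be deduced from $p_i>1$, through the strict convexity of $f_1$ on the orthogonal complement of $\mathrm{null}(D)$ combined with strict convexity of $f_2$ on the orthogonal complement of $\mathrm{null}(K)$ and the null-space intersection condition; this still suffices for uniqueness. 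Therefore, to rigorously support the stated strong convexity, I would make the assumption of injective $K$ explicit, which is the standard case in image deblurring and is consistent with the modelling framework of the paper.
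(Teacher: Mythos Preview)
The paper states this corollary without proof, so there is no argument to compare your proposal against directly. Your approach is the natural one and is correct in outline: convexity of each map $v\mapsto\|v\|_2^{p_i}$ for $p_i\geq 1$, preserved under linear pre-composition and summation, gives convexity of the regulariser; strong convexity of the quadratic fidelity (under injectivity of $K$) then yields strong convexity of the sum, and uniqueness follows.

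Your identification of the gap is also correct and worth keeping. The paper's standing assumption is only $\mathrm{null}(D)\cap\mathrm{null}(K)=\{\bm{0}_n\}$ (see \eqref{eq:AKnullinters}), not injectivity of $K$. Without the latter, the fidelity is merely convex, and since the regulariser is constant along $\mathrm{null}(D)=\mathrm{span}(\bm{1}_n)$, neither summand is strongly convex on its own. In that regime your fallback to \emph{strict} convexity via the null-space intersection condition is the right argument: if $\mathcal{J}$ is affine on a segment $[u_1,u_2]$, then $u_1-u_2\in\mathrm{null}(K)$ (from the quadratic fidelity) and, by strict convexity of $t\mapsto \|t\|_2^{p_i}$ for $p_i>1$ together with invertibility of each $\Lambda_iR_{\theta_i}$, also $u_1-u_2\in\mathrm{null}(D)$; hence $u_1=u_2$. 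This already yields uniqueness, which is all the corollary is used for. The ``strongly convex'' wording in the statement is therefore somewhat loose unless $K$ is injective, and your remark making that hypothesis explicit is the right way to make the claim rigorous.
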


Note, however, that as we discussed in the introduction, in this work we are more interested in the  non-convex case, e.g. when there exists at least one $i \in \{1,\ldots,n\}$  such that $p_i<1$, since in this better regularisation properties are enforced in  DTV$_p^\mathrm{sv}$-L$_2$. Therefore, in our applications uniqueness in general will not be guaranteed and we will be generally dealing with the case of local minima.

\section{Numerical solution by ADMM}
\label{sec:admm}
We can now describe the ADMM-based iterative algorithm \cite{BOYD_ADMM} used to  solve numerically the proposed DTV$_p^\mathrm{sv}$-L$_2$ model \eqref{eq:PMa}--\eqref{eq:PMb} once the values of all the parameters $p_i,\theta_i,\lambda_i^{(i)},\lambda_i^{(2)}$, $i = 1,\ldots,n$, which define the regulariser have been set according to the procedure illustrated in Section \ref{sec:parameter_estimation}.
To this purpose, first we introduce two auxiliary variables $r \in \mathbb{R}^n$ and $t \in \mathbb{R}^{2n}$ and rewrite model \eqref{eq:PMa}--\eqref{eq:PMb} 
in the following equivalent constrained form:
\begin{eqnarray}
\{ \, u^*,r^*,t^* \}
\:\;{\leftarrow}\;\:
\argmin_{u,r,t}
&\:&\bigg\{ \:
\sum_{i = 1}^{n} \left\| \Lambda_i R_{\theta_i} t_i \right\|_2^{p_i}
\;{+}\;
\frac{\mu}{2} \, \| r \|_2^2
\: \bigg\} \, ,
\label{eq:PM_ADMM_a} \vspace {0.2cm} \\
%\nonumber \\
\mathrm{subject}\:\mathrm{to:}
&& \; r \;{=}\; K u - g \, , \;\: t \;{=}\; D u \, ,
\label{eq:PM_ADMM_b}
\end{eqnarray}
where
%, for every $i=1,\ldots,n$, $\Sigma_i = (\sigma_{j,k})$ is defined in \eqref{eq:Sigma_def}, 
$D := (D_h^T,D_v^T)^T \in \mathbb{R}^{2n \times n}$ denotes the discrete gradient operator with \mbox{$D_h,~D_v \in \mathbb{R}^{n \times n}$} two linear operators representing finite difference discretisations of the first-order partial derivatives of the image $u$ in the horizontal and vertical direction, respectively, and where $t_i \:{:=}\: \big( (D_h u)_i \,,\, (D_v u)_i \big)^T \in \mathbb{R}^2$ stands for the discrete gradient of $u$ at pixel $i$.  
We notice that the auxiliary variable $t$ is introduced to transfer the discrete gradient operator out of the possibly non-convex non-smooth regulariser whereas the variable $r$ is aimed to adjust the regularisation parameter $\mu$ along the ADMM iterations such that the computed solution $u^*$ satisfies the discrepancy principle \cite{WC12}, i.e. belongs to the discrepancy set $\mathcal{D}$ in \eqref{discr_set}.
%\cite{APE}
% $(\nabla \:\! \cdot \:\! )_i$ and the ill-conditioned blur operator $\,K \:\! \cdot \,$ out 
% terms $\| \:\! \cdot \:\! \|_2^{p_i}$ and fidelity term $\| \:\! \cdot \:\! \|_q^q$, respectively.
%
%is aimed to play the role of the restored image $u$
%within the discrepancy principle-based constraint (\ref{eq:dp}).
%
%

In order to solve problem \eqref{eq:PM_ADMM_a}--\eqref{eq:PM_ADMM_b} via ADMM, we start defining the augmented Lagrangian functional as follows:
\begin{eqnarray}
\mathcal{L}(u,r,t;\rho_r,\rho_t)
&:{=}&
\displaystyle{
	\sum_{i = 1}^{n}  \left\| \Lambda_i R_{\theta_i} t_i \right\|_2^{p_i}
	\;{+}\;
	\frac{\mu}{2} \, \| r \|_2^2
	\,{-}\; \langle \, \rho_t , t - D u \, \rangle
	\;{+}\;
	\frac{\beta_t}{2} \: \| t - D u \|_2^2
} \nonumber \\
&&\displaystyle{
	{-}\; \langle \, \rho_r , r - (Ku-g) \, \rangle
	\,\;\;{+}\;
	%\frac{\beta_r}{2} \, \| r - (Ku-g) \|_2^2 \, ,
	\frac{\beta_r}{2} \, \| \, r - (Ku-g) \|_2^2 \, ,
}
%\nonumber \\
%
% &&\displaystyle{
% {-}\; \langle \, \lambda_t , t - D u \, \rangle
% \;{+}\;
% \frac{\beta_t}{2} \: \| t - D u \|_2^2 \;\, ,
%}
\label{eq:PM_AL}
\end{eqnarray}
where $\beta_r, \beta_t > 0$ are the scalar penalty parameters, while $\rho_r \in \mathbb{R}^n$, $\rho_t \in \mathbb{R}^{2n}$
are the vectors of Lagrange multipliers associated with the linear constraints $r = Ku-g$ and $t = Du$
in \eqref{eq:PM_ADMM_b}, respectively.
%
%Solving (\ref{eq:PM_ADMM_a})--(\ref{eq:PM_ADMM_b}) is thus equivalent to seek for the solutions of the following
%

By setting for simplicity $x := (u,r,t)$, 
$y := (\rho_r,\rho_t)$, 
$X := \mathbb{R}^n \times \mathbb{R}^n \times \mathbb{R}^{2n}$ and $Y := \mathbb{R}^n \times \mathbb{R}^{2n}$, we observe that solving \eqref{eq:PM_ADMM_a}--\eqref{eq:PM_ADMM_b} amounts to seek for the solutions of the following saddle point problem:
\begin{eqnarray}
\mathrm{Find}&&
\;\, (x^*;y^*)
\;\;{\in}\;\;
X \times Y \nonumber \\
\mathrm{such}\;\mathrm{that}&&
\; \mathcal{L}(x^*;y)
\:\;{\leq}\;\;
\mathcal{L}(x^*;y^*)
\:\;{\leq}\;\;
\mathcal{L}(x;y^*) 
%\nonumber  \\ && 
\;\;\;\;\: \forall \: (x;y)
\;\;{\in}\;\;
X \times Y
\: ,
\label{eq:PM_new_S}
\end{eqnarray}
where the augmented Lagrangian functional $\mathcal{L}$ is defined in \eqref{eq:PM_AL}.

Upon suitable initialisation, and for any $k\geq 0$, the $k$-th iteration of the ADMM iterative algorithm applied to solve the saddle-point problem \eqref{eq:PM_new_S} reads as follows:

\begin{eqnarray}
&
u^{(k+1)} &
\;{\leftarrow}\;\;\,\,
\argmin_{u \in \mathbb{R}^n} \;
\mathcal{L}(u,r^{(k)},t^{(k)};\rho_r^{(k)},\rho_t^{(k)}) \, ,
\label{eq:PM_ADMM_u} \\
&
r^{(k+1)} &
\;{\leftarrow}\;\;\,\,
\argmin_{r \in \mathbb{R}^n} \;
\mathcal{L}(u^{(k+1)},r,t^{(k)};\rho_r^{(k)},\rho_t^{(k)}) \, ,
\label{eq:PM_ADMM_r} \\
&
t^{(k+1)} &
\;{\leftarrow}\;\;\,\,
\argmin_{t \in \mathbb{R}^{2n}} \;
\mathcal{L}(u^{(k+1)},r^{(k+1)},t;\rho_r^{(k)},\rho_t^{(k)}) \, ,
\label{eq:PM_ADMM_t} \\
&
\rho_r^{(k+1)} &
\;{\leftarrow}\;\;\,\,
\rho_r^{(k)} \;{-}\; \beta_r \, \big( \, r^{(k+1)} \;{-}\; (K u^{(k+1)}-g) \, \big) \, ,
\label{eq:PM_ADMM_lr} \\
&
\rho_t^{(k+1)} &
\;{\leftarrow}\;\;\,\,
\rho_t^{(k)} \;{-}\; \beta_t \, \big( \, t^{(k+1)} \;{-}\; D u^{(k+1)} \, \big) \, .
\label{eq:PM_ADMM_lt}
\end{eqnarray}
We notice that sub-problems
\eqref{eq:PM_ADMM_u} and \eqref{eq:PM_ADMM_r} for the primal variables $u$ and $r$ admit solutions   
based on formulas given in \cite{CMBBE} for identical  sub-problems. 
In particular, sub-problem \eqref{eq:PM_ADMM_u} for $u$ reduces to the solution of the following $n \times n$ system of linear equations
\begin{equation}
\left(
D^T D
+ \frac{\beta_r}{\beta_t} K^T K
\right)
u
=
D^T \left( t^{(k)} - \frac{1}{\beta_t} \rho^{(k)}_t \right)
+
\frac{\beta_r}{\beta_t} K^T \left( r^{(k)} - \frac{1}{\beta_r} \rho^{(k)}_r + g  \right)
\: ,
\label{eq:sub_u_sol}
\end{equation}
which is solvable since 
\begin{equation}
\mathrm{null}\left( D^T D + 
\frac{\beta_r}{\beta_t} K^T K \right)
=
\mathrm{null}\left( D^T D \right) 
\;{\cap}\;
\mathrm{null}\left( K^T K \right)
=
\mathrm{null}\left( D \right) 
\;{\cap}\;
\mathrm{null}\left( K \right)
=
\{ \bm{0}_n \} ,
\end{equation}
where last equality has been previously stated in \eqref{eq:AKnullinters}.
%Such property can be trivially shown recalling that $K$ represents a blurring operator,
%which is a low-pass filter, whereas the regularisation operator $D$ is a first-order difference
%operator and, hence, a high-pass filter. 
Assuming periodic boundary conditions for $u$ - such that that both $D^T D$ and $K^T K$ are block circulant matrices with circulant blocks (BCCB) - the linear system \eqref{eq:sub_u_sol} can be solved efficiently by one application of the forward 2D Fast Fourier Transform (FFT) and one application of the inverse 2D FFT, each at a cost of $O(n \log n)$.

The solution of the sub-problem \eqref{eq:PM_ADMM_r} for $r$ is obtained by computing first the vector 
\begin{equation}
w^{(k+1)} \;{=}\;\: Ku^{(k+1)} -\, g \: + \, \frac{1}{\beta_r} \, \rho_r^{(k)} \; ,
\label{eq:v_def}
\end{equation}
and then, recalling \cite{CMBBE} and the definition of the discrepancy set in \eqref{discr_set}, by computing jointly the new values of both the regularisation parameter $\mu$ and the variable $r$ as follows: 
\begin{equation}
\label{eq:sub_r_q2_q2_sol}
\begin{array}{l}
\|  w^{(k+1)} \|_2 \leq \delta \quad
\!\!\Longrightarrow\!\! \quad
\mu^{(k+1)} = 0, \hspace{3.55cm}
r^{(k+1)} = w^{(k+1)} \\
\| w^{(k+1)} \|_2 > \delta \quad
\!\!\Longrightarrow\!\!  \quad
\mu^{(k+1)} = \beta_r \big( \| \, w^{(k+1)} \|_2 / \delta - 1 \big), \quad\!\!
r^{(k+1)}=\delta w^{(k+1)} / \| w^{(k+1)} \|_2. \end{array}
\end{equation}
As far as the minimisation sub-problem for $t$ in \eqref{eq:PM_ADMM_t} is concerned, after simple algebraic manipulations, we deduce that it can be re-written as follows:
\begin{eqnarray*}
% &t^{(k+1)}& \;{\leftarrow}\; \argmin_{t \in \mathbb{R}^{2n}}\:
% \left\{ \:
% \sum_{i=1}^{n} \left\| \Sigma_i^{-1/2} t_i \right\|_2^{p_i}
% \;{-}\;
% \langle \lambda_t^{(k)} , t - D u^{(k+1)} \rangle
% \;{+}\;
% \frac{\beta_t}{2}
% \left\| t - D u^{(k+1)} \right\|_2^2
% \:\right\} \nonumber \\
%
% && \;{\leftarrow}\; \argmin_{t \in \mathbb{R}^{2n}}\:
% \left\{ \:
% \sum_{i=1}^{n} \left\| \Sigma_i^{-1/2} t_i \right\|_2^{p_i}
% \;{+}\;
% \frac{\beta_t}{2}
% \left\| t - \left( D u^{(k+1)} + \frac{1}{\beta_t} \lambda_t^{(k)} \right) \right\|_2^2
% \:\right\} \nonumber \\
%
&t^{(k+1)}& \;{\leftarrow}\; \argmin_{t \in \mathbb{R}^{2n}}~
\sum_{i=1}^{n}
\left\{
\left\| \Lambda_i R_{\theta_i} t_i \right\|_2^{p_i}
\;{+}\;
\frac{\beta_t}{2}
\left\| t_i - \left( \left( D u^{(k+1)} \right)_i + \frac{1}{\beta_t} \left( \rho_t^{(k)} \right)_i \right) \right\|_2^2
\right\} 
\: .
\end{eqnarray*}
%
%with $\left( D u^{(k+1)} \right)_i, \left( \lambda_t^{(k)} \right)_i \in \mathbb{R}^2$ denoting the discrete gradient 
%of image $u^{(k+1)}$ and the Lagrange multipliers at pixel $i$, respectively.
Solving the $2n$-dimensional minimisation problem above is thus equivalent to solve the $n$ following independent $2$-dimensional problems:
\begin{eqnarray}
&t^{(k+1)}_i& {\leftarrow}\; \argmin_{t_i \in \mathbb{R}^2}
\left\{ \,
\left\| \Lambda_i R_{\theta_i} t_i \right\|_2^{p_i}
\;{+}\;
\frac{\beta_t}{2} \left\| t_i - q_i^{(k+1)} \right\|_2^2
\,\right\}
,
\quad i = 1, \ldots , n \: , \label{eq:sub_t_i}
\end{eqnarray}
where the vectors $q^{(k+1)}_i \in \mathbb{R}^2$ are defined explicitly at any iteration by
\begin{equation}
q^{(k+1)}_i \:\;{:=}\;\: \left( D u^{(k+1)} \right)_i + \frac{1}{\beta_t} \left( \rho^{(k)}_t \right)_i \;\: ,
\quad i = 1, \ldots , n \: .
\label{eq:sub_t_i_q}
\end{equation}

The solutions of the $n$ bivariate optimisation problems in \eqref{eq:sub_t_i} requires the computation of a special proximal mapping operator. We dedicate the following Section \ref{sec:po} to carefully discuss the solution of this optimisation problem and show that it can be eventually re-written as a one-dimensional optimisation problem and thus solved efficiently.

\medskip

To summarise, we report in Algorithm \ref{alg:1} the pseudocode of the proposed ADMM iterative
scheme used to solve the saddle-point problem \eqref{eq:PM_AL}--\eqref{eq:PM_new_S}.

Over the last decades, the ADMM algorithm has been applied to a wide range of convex and non-convex optimisation problems arising in several areas of signal and image processing.
In convex settings, several convergence results have been established for ADMM-type algorithms, see for example \cite{HY} and references therein.  Such convergence results cover the proposed DTV$_{p}^{\mathrm{sv}}$-L$_2$ model in the special convex case when $ p_i \ge 1$ for every $i=1,\ldots,n$. However, very few studies on the convergence properties of ADMM in non-convex regimes have been performed. To the best of our knowledge, provable convergence results of ADMM in non-convex regimes are still very limited to particular classes of problems and under certain conditions, see, e.g. \cite{HLR, Wang2018, Bolte2018}.
Nevertheless, from an empirical point of view, the ADMM works extremely well for various applications involving non-convex objectives, thus suggesting heuristically its good performance in such cases as well.

%\red{LUCA: in the Algorithm below do you use a stopping criterion based on some tolerance? We should list this tolerance as an input maybe? I guess that also the parameters $\beta_t$ and $\beta_r$ are inputs, we should add them.}
%
\begin{algorithm}
\caption{ADMM scheme for the solution of problem \eqref{eq:PMa}--\eqref{eq:PMb} \vspace{0.05cm} }
\label{alg:1}
\vspace{0.2cm}
{\renewcommand{\arraystretch}{1.2}
\renewcommand{\tabcolsep}{0.0cm}
\vspace{-0.08cm}
\begin{tabular}{ll}
\textbf{inputs}:      & observed image $\,g \:{\in}\; \mathbb{R}^n$, $\:$ noise standard deviation $\sigma > 0$ \vspace{0.04cm} \\
\textbf{parameters}:$\;\;\:$      & discrepancy parameter $\tau \simeq 1$, $\:$ ADMM penalty parameters $\beta_r, \beta_t > 0$ \vspace{0.04cm} \\
\textbf{output}: $\;\:$    & approximate solution $\,u^* {\in}\; \mathbb{R}^n$ of \eqref{eq:PMa}--\eqref{eq:PMb} \vspace{0.2cm} \\
%\textbf{parameters}: $\;\;$& MODEL: $\;\;T>0 \,\;\; \mathrm{and} \,\;\lambda > 0\:$ \vspace{0.05cm}\\
%                           & ADMM: $\quad\:\!\beta_t,\beta>0\,\,$ \vspace{0.2cm}\\
\end{tabular}
}
\hspace{4cm}
\vspace{0.1cm}
{\renewcommand{\arraystretch}{1.2}
\renewcommand{\tabcolsep}{0.0cm}

\begin{tabular}{rcll}
1. & $\quad$ & \multicolumn{2}{l}{\textbf{Initialisation:}} \vspace{0.05cm}\\
2. && \multicolumn{2}{l}{$\;\;\;\;\cdot$ estimate model parameters $p_i,R_{\theta_i},\Lambda_i$, $i = 1,\ldots,n$, by ML approach in Section \ref{sec:parameter_estimation}} \vspace{0.1cm}\\
3. && \multicolumn{2}{l}{$\;\;\;\;\cdot$ set $\:\delta = \tau \sigma \sqrt{n}$, $u^{(0)} = g$, $r^{(0)} = K u^{(0)} - g$, $t^{(0)} = D u^{(0)}$, $\rho_r^{(0)} = \rho_t^{(0)} = 0$, $k=0$} \vspace{0.2cm}\\
4. && \multicolumn{2}{l}{\textbf{while} not converging \textbf{do}:} \vspace{0.1cm}\\
5. && $\quad\;\;\bf{\cdot}$ \textbf{update primal variables:} &  \vspace{0.05cm} \\
6. && $\qquad\qquad\cdot$ compute $\:u^{(k+1)}$   &\quad by solving \eqref{eq:sub_u_sol}  \vspace{0.05cm} \\
7. && $\qquad\qquad\cdot$ compute $\:r^{(k+1)}$   &\quad  by applying  \eqref{eq:v_def}, \eqref{eq:sub_r_q2_q2_sol} \vspace{0.05cm} \\
8. && $\qquad\qquad\cdot$ compute $\:t^{(k+1)}$   &\quad see Section \ref{sec:po}  \vspace{0.05cm} \\
9. && $\quad\;\;\bf{\cdot}$ \textbf{update dual variables:} &  \vspace{0.05cm} \\
10. && $\qquad\qquad\cdot$ compute $\:\rho_r^{(k+1)}$, $\:\rho_t^{(k+1)}$ &\quad by applying \eqref{eq:PM_ADMM_lr}, \eqref{eq:PM_ADMM_lt} \vspace{0.1cm} \\
11. && $\qquad\qquad\cdot$ $k=k+1$ \vspace{0.1cm} \\
12. && \multicolumn{2}{l}{\textbf{end$\;$for}} \vspace{0.09cm} \\
13. && \multicolumn{2}{l}{$u^* = u^{(k+1)}$}
\end{tabular}
}
\end{algorithm}

\subsection{A non-convex proximal mapping solving \eqref{eq:sub_t_i}}
\label{sec:po}
In this section, we describe a novel result in multi-variate non-convex proximal calculus which is crucial to solve efficiently step 8 in the ADMM Algorithm \ref{alg:1}, i.e. the problem \eqref{eq:sub_t_i}. Such problem can be interpreted as the calculation of a non-convex proximal mapping, see \cite{Hare2009}. We then start recalling its definition.
\begin{definition}[proximal map for non-convex functions]
Let $f: \R^n \to \R$ be a proper, lower semi-continuous and possibly non-convex function and let $\beta>0$. The proximal map of $f$ with parameter $\beta$ is the set-valued function 
$\mathrm{prox}_{\beta f}: \R^n \rightrightarrows \R^n$ defined for any $q\in\R^n$ by:
\begin{equation}  
\label{eq:prox}
\mathrm{prox}_{\beta f}(q) \;{:=}\; \argmin_{t \in \R^n} 
\left\{
f(t) + \frac{\beta}{2} \, \left\| t - q \right\|_2^2 
\right\}.
\end{equation}
\label{def:prox}
\end{definition}
Note that under such definition the set $\mathrm{prox}_{\beta f}(q)$ is in general not a singleton. Furthermore, for some particular choices of $\beta>0$ it may also be empty.

\medskip

We present in the following the results concerned with the computation of the proximal map $\mathrm{prox}_{\beta f}$ in \eqref{eq:prox}, in the case when $f: \R^2 \to \R$ is the function 
\begin{equation}\label{eq:p1}
f(t):=
\left( t^T \! A \, t \right)^{p/2} \!\!, 
\quad t \in \R^2, 
\quad A \in \R^{2 \times 2} \; \mathrm{symmetric}\;\,\mathrm{positive}\;\,\mathrm{definite}, \quad p>0 \, .
\end{equation}
The ADMM substep \eqref{eq:sub_t_i} will then be a special instance of \eqref{eq:prox} under the choice of $f$ as above,  $\beta=\beta_t$, $t=t_i$, $A=R_{\theta_i}^{T}\!\Lambda_i^2\!R_{\theta_i}$, $p=p_i$, and $q=q_i^{(k+1)}$, for $i=1,\ldots,n$ and $k\geq 0$.
%
%Hence, according to Definition \ref{def:prox}, we aim at computing the solution of the following bivariate optimization problem: 
%
%\begin{equation}
%x^* \;{\in}\; \argmin_{x \in \R^2} 
%\left\{ \, 
%F(x) \;{:=}\;
%\left( 
%x^T \!\! A \, x
%\right)^{p/2}%{\textstyle{\frac{p}{2}}}
%{+}\;\,
%\frac{\eta}{2} \, \left\| \, x - q \, \right\|_2^2
%\,\, \right\} \, ,
%\label{eq:proxprob1}
%\end{equation}
%
%with
%
%\begin{equation}
%A \in \R^{2 \times 2} \; \mathrm{s.p.d.} \, , \quad p,\eta \in %\R_+, \quad v \in \R^2 \, .
%\label{eq:proxprob2}
%\end{equation}
%

%In Proposition \ref{prop:prox_exist} below, we give existence results for problem (\ref{eq:proxprob1})--(\ref{eq:proxprob2}). 
%Then, in Proposition \ref{prop:prox_sol1} and Proposition \ref{prop:prox_sol2} we tackle the problem of finding solutions of 
%(\ref{eq:proxprob1})--(\ref{eq:proxprob2}) when the symmetric positive definite matrix $A$ has condition number $\kappa > 1$. 
%In fact, the case $\kappa = 1$ corresponds to a scalar diagonal matrix $A$ and, hence, problem 
%(\ref{eq:proxprob1})--(\ref{eq:proxprob2}) reduces to a simpler, more specific problem whose solution has been 
%previously given in [TVp]. 

We now ensure that under the choice \eqref{eq:p1} above the minimisation problem \eqref{eq:prox} admits solutions. Then, assuming that $A$ has condition number $\kappa>1$ we show how the calculation of the proximal map can be reduced to the solution of a one-dimensional problem, whose form depends on the input $q$ and the matrix $A$. Note that the case $\kappa=1$ boils down to consider a scalar and diagonal matrix $A$, which simplifies the problem and for which the results discussed in \cite{tvpl2} can be used.

\begin{proposition}
\label{prop:prox_exist}
Under the choice \eqref{eq:p1}, the optimisation problem \eqref{eq:prox} admits at least one solution.
\end{proposition}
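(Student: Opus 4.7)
The plan is to apply the direct method: show the objective is continuous, bounded from below, and coercive on $\R^2$, and then invoke Weierstrass's theorem. Since we are in finite dimension, no compactness argument beyond coercivity is needed.

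First I would introduce the objective
\begin{equation*}
\Phi(t) \;:=\; \bigl(t^T A t\bigr)^{p/2} \;+\; \frac{\beta}{2}\,\|t-q\|_2^2,
\quad t \in \R^2,
\end{equation*}
and verify continuity. Since $A$ is symmetric positive definite, the quadratic form $t \mapsto t^T A t$ is nonnegative and continuous, so for any $p>0$ the composition with the continuous map $s \mapsto s^{p/2}$ on $[0,\infty)$ is continuous. The second term is a polynomial in $t$, hence $\Phi$ is continuous on all of $\R^2$. Nonnegativity (boundedness from below by $0$) is immediate from the same observations.

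Next I would establish coercivity. Because $A \succ 0$, we have $t^T A t \geq \lambda_{\min}(A)\,\|t\|_2^2 \geq 0$, so the first term of $\Phi$ is nonnegative, and the quadratic penalty satisfies
\begin{equation*}
\tfrac{\beta}{2}\|t-q\|_2^2 \;\geq\; \tfrac{\beta}{4}\|t\|_2^2 \,-\, \tfrac{\beta}{2}\|q\|_2^2
\end{equation*}
by the elementary inequality $\|t-q\|^2 \geq \tfrac{1}{2}\|t\|^2 - \|q\|^2$. Hence $\Phi(t) \to +\infty$ as $\|t\|_2 \to \infty$, i.e., $\Phi$ is coercive.

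Finally, since $\Phi$ is continuous and coercive on $\R^2$, all its sublevel sets $\{t : \Phi(t) \leq c\}$ are closed and bounded, hence compact. Choosing $c := \Phi(q)$ yields a nonempty compact set on which $\Phi$ attains its infimum by the Weierstrass extreme value theorem; any such minimiser is also a global minimiser of $\Phi$ on $\R^2$, which proves that $\mathrm{prox}_{\beta f}(q)$ is nonempty. I do not foresee a real obstacle: the only point worth double-checking is that $p>0$ (possibly with $p<1$) does not spoil continuity at $t=0$, which it does not, since $s^{p/2} \to 0$ as $s \to 0^+$ for any $p>0$.
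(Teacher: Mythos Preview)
Your argument is correct and follows essentially the same approach as the paper: establish continuity, nonnegativity, and coercivity of the objective, then conclude existence of a global minimiser via Weierstrass. The only (inessential) difference is that the paper observes both summands are individually coercive, whereas you rely solely on coercivity of the quadratic term; your explicit check of continuity at $t=0$ for $p\in(0,1)$ is a welcome addition.
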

\begin{proof}
Under the choice \eqref{eq:p1}, both the terms in the objective function in \eqref{eq:prox} are continuous, bounded from below by zero and coercive over the entire domain $\R^2$. It clearly follows that the total objective function is continuous, bounded from below by zero and coercive, hence it admits at least one global minimiser.
\end{proof}

In the following, for $v,w\in\R^n$ we denote by $v \circ w$, $|v|$ and $\mathrm{sign}(v)$ the component-wise (or Hadamard) product between $v$ and $w$ and the component-wise absolute value and sign of $v$, respectively.
%with $\mathrm{sign}(x) = 1\;\mathrm{if}\;x>0,\;-1\;\mathrm{if}\;x<0,\;0\;\mathrm{if}\;x=0,\;x \in \R$.
\begin{proposition}
\label{prop:prox_sol1}
Let $p,~\beta > 0$, $q \in \R^2$ and let $A\in\R^{2\times 2}$ be a symmetric positive definite matrix with condition number $\kappa>1$ and eigenvalue decomposition
\begin{equation}
A = V^T \Lambda V, \quad V^TV=VV^T = I_2, \quad 
\Lambda = \diag(\lambda_1,\lambda_2), \;\: \lambda_1 > \lambda_2 > 0.
\label{eq:A_EIG}
\end{equation}
Let us further define
\begin{equation}
\tilde{q} := V q, \quad 
s := \mathrm{sign}(\tilde{q}), \quad
\bar{q} := |\tilde{q}|, \quad
\overline{\beta} := \frac{\beta}{\lambda_2^{p/2}}, \quad 
\overline{\Lambda} := \diag(\kappa,1), \quad \kappa = \frac{\lambda_1}{\lambda_2} \,.
\label{eq:poi}
\end{equation}
Then, any solution $t^* \in \R^2$ of the problem
\begin{equation}  \label{eq:prox_full}
 t^*\in\argmin_{t \in \R^2} 
\left\{
F(t):=(t^T A t)^{p/2} + \frac{\beta}{2} \, \left\| t - q \right\|_2^2 
\right\}.
\end{equation} 
can be expressed as
\begin{equation}
t^* 
\;{=}\;\, 
V^T \left( s \:{\circ}\: z^*\right),
\quad
z^* \;{\in}\; \argmin_{z \in \mathcal{H}_1} \, H(z) \, , 
%\left\{ H(z) \;{:=}\; 
%\left(
%z^T \widetilde{\Lambda} \, z 
%\right)^{p/2}
%{+}\;
%\frac{\tilde{\vartheta}}{2} 
%\left\| \, z - |\tilde{q}| \, \right\|_2^2
%\right\},
% \left\{
% 	\begin{array}{ll}
% 	... & ;\mathrm{if} \; ... \vspace{0.1cm}\\
% 	... & ;\mathrm{if} \; ... \vspace{0.1cm}\\
% 	... & ;\mathrm{if} \; ... \space{0.1cm}
% 	\end{array}
% 	\right.
% \mathrm{sign}(q) .* \, , \quad 
% y^* \;{\in}\; \argmin_{y \in \R^2} f(x)
\label{eq:propprox1}
\end{equation}
%where $V\in\R^{2\times 2}$ is a unitary operator and: 
%\begin{equation} \label{eq:zstar}
%z^* \;{\in}\; \argmin_{z \in \mathcal{H}_1} \, H(z)
%\end{equation}
where the objective function $H: \R^2 \to \R$ and the  feasible 
set $\mathcal{H}_1 \subset \R^2$ are defined by
\begin{equation}
H(z) :=
\left(
z^T \, \overline{\Lambda} \, z 
\right)^{p/2}
{+}\;
\frac{\overline{\beta}}{2} 
\left\| \, z - \bar{q} \, \right\|_2^2, \qquad
\mathcal{H}_1 := \mathcal{H} \;{\cap}\; \left( \big[ 0,\bar{q}_1 \big] \times \big[0,\bar{q}_2\big] \right),  \label{eq:def_arc_hyperb}
\end{equation}
with $\mathcal{H}$ being the rectangular hyperbola defined by
\begin{eqnarray}
\mathcal{H}{:=} & 
\left\{ \, 
\left(z_1,z_2\right) \in \R^2: 
\left(z_1-c_1\right) \left(z_2-c_2\right)= c_1 c_2,~\quad c_1 = -\frac{\bar{q}_1}{\kappa-1}, ~c_2 = \frac{\kappa \, \bar{q}_2}{\kappa-1}  \right\}.
\label{eq:Ah}
\end{eqnarray}
\end{proposition}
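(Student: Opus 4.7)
The plan is to perform three successive minimization-preserving reductions and then extract the hyperbola equation from first-order optimality. First I would diagonalize via the orthogonal change of variable $y = Vt$: since $V^TV = VV^T = I_2$ and $Vq = \tilde q$, one gets $t^T A t = y^T \Lambda y$ and $\|t - q\|_2^2 = \|V^T(y - Vq)\|_2^2 = \|y - \tilde q\|_2^2$, so problem \eqref{eq:prox_full} is equivalent to minimizing $G(y) := (\lambda_1 y_1^2 + \lambda_2 y_2^2)^{p/2} + (\beta/2)\|y - \tilde q\|_2^2$, with $t^* = V^T y^*$. Because $G$ depends only on $|y_1|,|y_2|$ in its first term and because $(|y_i|s_i - \tilde q_i)^2 = (|y_i|-|\tilde q_i|)^2 \le (y_i - \tilde q_i)^2$, one can restrict to $y = s\circ z$ with $z\ge 0$, reducing the problem to minimization over $z\ge 0$ of $(\lambda_1 z_1^2 + \lambda_2 z_2^2)^{p/2} + (\beta/2)\|z - \bar q\|_2^2$. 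Factoring $\lambda_2^{p/2}$ out of the first term writes the objective as $\lambda_2^{p/2} H(z)$ with $H$, $\overline{\beta}$, $\overline{\Lambda}$ exactly as in the proposition, so the set of minimizers is unchanged.

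Next I would establish the box confinement $z^*\in[0,\bar q_1]\times[0,\bar q_2]$. This is immediate from monotonicity: if $z_i^* > \bar q_i$, then replacing $z_i^*$ by $\bar q_i$ strictly decreases $(z_i-\bar q_i)^2$ without increasing $z^T\overline{\Lambda} z$ (which is componentwise nondecreasing for $z\ge 0$), strictly lowering $H$. This yields $z^*\in[0,\bar q_1]\times[0,\bar q_2]$, already giving one half of the description of $\mathcal{H}_1$.

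The main work is showing $z^*\in\mathcal{H}$. Assume first $\bar q_1,\bar q_2 > 0$. A direct one-sided perturbation at a candidate of the form $(0,z_2^*)$ with $z_2^*>0$ gives $H(\epsilon,z_2^*) - H(0,z_2^*) = -\overline{\beta}\bar q_1\,\epsilon + O(\epsilon^2)$, hence negative for small $\epsilon>0$; this forces $z_1^*>0$, and symmetrically $z_2^*>0$. At such an interior point $H$ is smooth (since $z^{*T}\overline{\Lambda}z^* > 0$), and $\nabla H(z^*) = 0$ reads, with $\alpha := p\,(z^{*T}\overline{\Lambda}z^*)^{p/2-1}\ge 0$, as the decoupled pair $(\alpha\kappa + \overline{\beta})z_1^* = \overline{\beta}\bar q_1$ and $(\alpha + \overline{\beta})z_2^* = \overline{\beta}\bar q_2$. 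A short computation of $z_1^*-c_1$ and $z_2^*-c_2$ from these formulas produces a pairwise cancellation in which the factors $(\overline{\beta}+\alpha)(\overline{\beta}+\kappa\alpha)$ collapse against the denominators, leaving exactly $(z_1^*-c_1)(z_2^*-c_2) = -\kappa\bar q_1\bar q_2/(\kappa-1)^2 = c_1c_2$, i.e. $z^*\in\mathcal{H}$. Combined with the box, this gives $z^*\in\mathcal{H}_1$. The degenerate cases $\bar q_1 = 0$ and/or $\bar q_2 = 0$ make the corresponding $c_i$ vanish, so that $\mathcal{H}$ degenerates into a union of coordinate lines, and the corresponding one-dimensional first-order condition places $z^*$ on that line.

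The cleanest part is the algebraic derivation of the hyperbola identity. The main obstacle is the case $p<1$, where $(z^T\overline{\Lambda}z)^{p/2}$ is non-smooth on the coordinate axes and in particular the corner $z = 0$ can itself be a legitimate local (and sometimes global) minimizer. One must verify by direct perturbation, rather than by invoking a smooth KKT system, that no spurious minima arise with one $z_i^* = 0$ and the other strictly positive when $\bar q_i > 0$, and then observe that the corner $z = 0$ — which does satisfy $(0-c_1)(0-c_2) = c_1c_2$ and hence belongs to $\mathcal{H}$ — is consistent with the statement in the degenerate subcases as well.
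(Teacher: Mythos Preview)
Your proposal is correct, and the three preliminary reductions (orthogonal change of variable $y=Vt$, sign reflection $y=s\circ z$, and rescaling by $\lambda_2^{p/2}$) match the paper's argument exactly. The part that differs is how you pin down the hyperbola. The paper proceeds geometrically: it foliates the first quadrant by the ellipses $\mathcal{E}_R=\{z:z^T\overline{\Lambda}z=R^2\}$, parametrises each one by an angle $\theta$, differentiates the restricted objective $H_R(\theta)$, and observes that $H_R'(\theta)=0$ is, after simplification, precisely the equation $(z_1-c_1)(z_2-c_2)=c_1c_2$; a sign discussion then shows that the first-quadrant intersection is the minimiser on each ellipse. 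You instead go straight to the unconstrained first-order condition $\nabla H(z^*)=0$, parametrise $z_1^*,z_2^*$ by the common scalar $\alpha=p\,(z^{*T}\overline{\Lambda}z^*)^{p/2-1}$, and verify the hyperbola identity by direct substitution. Your route is shorter and more algebraic, and it also gives a cleaner box argument (componentwise monotonicity) than the paper's comparison $H(\bar z)>H(\bar q)$. The price you pay is having to treat the non-smooth point $z=0$ separately when $p<1$; the paper's ellipse foliation sidesteps this because $H$ is smooth on every $\mathcal{E}_R$ with $R>0$. Your one-sided perturbation argument handles this correctly, and your observation that the corner $(0,0)$ already satisfies $(0-c_1)(0-c_2)=c_1c_2$ closes the remaining case.
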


\begin{proof}
% First, we consider the particular cases when one or both the components of the vector $q = (q_1,q_2)$ are zero.
% In case that $q_1 = q_2 = 0$, then
We start noticing that the matrix $\Lambda$ in \eqref{eq:A_EIG} can be factorised as $\Lambda=\lambda_2 \overline{\Lambda}$, where $\overline{\Lambda}$ is  defined in \eqref{eq:poi}.
By substituting such factorisation into \eqref{eq:A_EIG}, we can reformulate problem \eqref{eq:prox_full} as:
\begin{equation}
t^* \;{\in}\; \argmin_{t \in \R^2}
\left\{ 
\lambda_2^{p/2}%^{\textstyle{\frac{p}{2}}} 
\left(
t^T V^T \, \overline{\Lambda} \, V t
\right)^{p/2}%{\textstyle{\frac{p}{2}}}
\;{+}\;
\frac{\beta}{2} \, \left\| t - q \right\|_2^2
\right\} \, .
\label{eq:proxprob1_bis}
\end{equation}
After introducing the bijective linear change of variable
\begin{equation}
y := V t \quad{\Longleftrightarrow}\quad t = V^T y,
\label{eq:xtoy}
\end{equation}
we have that problem \eqref{eq:proxprob1_bis} can be equivalently expressed as
\begin{eqnarray}
t^* &\;{=}\;\,& V^T y^* \, , \label{eq:proxprob1_ter} \\ 
y^* &\;{\in}\;\,&  \argmin_{y \in \R^2}
\left\{ \, 
G(y) 
:=
\left(
y^T \, \overline{\Lambda} \, y 
\right)^{p/2}%{\textstyle{\frac{p}{2}}}
{+}\;
\frac{\overline{\beta}}{2} \left\| y - \tilde{q} \right\|_2^2
\right\}, \label{eq:proxprob2_ter} 
%\\
%\tilde{\eta}\; &=& \frac{\eta}{\lambda_2^{p/2}} \, , \;\;\;\: \tilde{q} \: = \: V q \, .
\label{eq:proxprob3_ter}
\end{eqnarray}
where $\overline{\beta}$ and $\tilde{q}$ are defined in \eqref{eq:poi}.

If $\tilde{q}_1 = \tilde{q}_2 = 0$ then one can trivially show that clearly $y^* = (0,0) \Longrightarrow t^* = (0,0)$.
We can then assume that $\tilde{q} \in \R^2 \setminus \{0\}$ and exploit symmetries of the function $G$ in \eqref{eq:proxprob2_ter} to restrict the optimisation problem to the case where $\tilde{q}$ lies in the first quadrant only.
First, we notice that, for any given $a \in \R$ and $b \in \R \setminus \{0\}$, 
%such that $\mathrm{sign}(b) \in \{-1,1\}$, 
we have
\begin{eqnarray}
a^2 &\;{=}\;\,& 
\left( \mathrm{sign}(b) \right)^2 a^2 
= \left( \mathrm{sign}(b) \, a \right)^2, 
\label{eq:ss1} \\
\left( a - b \right)^2 
&\;{=}\;\,& 
\left( a - \mathrm{sign}(b) |b| \right)^2 =
\left( \mathrm{sign}(b) \left( \frac{a}{\mathrm{sign}(b)} - |b| \right) \right)^2 
\nonumber \\
&\;{=}\;\,& 
\left( \mathrm{sign}(b) \right)^2 \left( \mathrm{sign}(b) a - |b| \right)^2 
= \left( \mathrm{sign}(b) a - |b| \right)^2 .
\label{eq:ss2}
\end{eqnarray}
By now recalling definitions of function $G$ in \eqref{eq:proxprob2_ter} and of matrix $\overline{\Lambda}$ in \eqref{eq:poi}, and then using \eqref{eq:ss1}-\eqref{eq:ss2}, we can write
\begin{eqnarray}
&G(y)&
\;{=}\; 
\left( \kappa y_1^2 + y_2^2 \right)^{p/2} 
+ 
\frac{\overline{\beta}}{2} 
\left( 
\left(y_1 - \tilde{q}_1 \right)^2 + 
\left(y_2 - \tilde{q}_2 \right)^2 \right)
\nonumber \\
&&
\;{=}\;  
\left( 
\kappa 
\left( \mathrm{sign}(\tilde{q}_1)y_1 \right)^2 
+ 
\left( \mathrm{sign}(\tilde{q}_2)y_2 \right)^2 
\right)^{p/2}
+ 
\frac{\overline{\beta}}{2} 
\left( 
\left(\mathrm{sign}(\tilde{q}_1) y_1 - |\tilde{q}_1| \right)^2 
+ 
\left(\mathrm{sign}(\tilde{q}_2) y_2 - |\tilde{q}_2|^2 \right) \right) .
%\nonumber \\
%{=}\;\,&H(z)&
%\;{:=}\; 
%\left( 
%\kappa z_1^2 + z_2^2 
%\right)^{p/2}
%+ 
%\frac{\overline{\beta}}{2} 
%\left( 
%\left(z_1 - |\tilde{q}_1| \right)^2 + 
%\left(z_2 - |\tilde{q}_2|)^2 \right)
%\right), \quad
%\left\{ \!
%\begin{array}{rcl}
%z_1 & \!\!\!\!{=}\!\!\!\! & \mathrm{sign}(\tilde{q}_1) \, y_1, \\ 
%z_2 & \!\!\!\!{=}\!\!\!\! & \mathrm{sign}(\tilde{q}_2) %\, y_2. 
%\end{array}
%\right.
\nonumber 
\end{eqnarray}
By setting $S:= \mathrm{diag}
\left( 
\mathrm{sign}\left(\tilde{q}_1\right),
\mathrm{sign}\left(\tilde{q}_2\right)
\right)$ we can now set 
\begin{equation}
z := S y \quad{\Longleftrightarrow}\quad y = S^{-1} z,
\end{equation}
which is a linear bijective change of variable since $\tilde{q}_1,\tilde{q}_2 \in \R \setminus \{0\} \;{\Longrightarrow}\; \mathrm{sign}\left(\tilde{q}_1\right),\mathrm{sign}\left(\tilde{q}_2\right) \in \{-1,1\}$. Recalling the definition of $s$ and $\overline{q}$ in \eqref{eq:poi}, we thus get that the optimisation problem \eqref{eq:proxprob2_ter} is equivalent to
\begin{eqnarray}
y^* &\;{=}\;\,& s \:{\circ}\: z^* \, , \label{eq:proxprob1_qua} \\ 
z^* &\;{\in}\;\,&  \argmin_{z \in \R^2}
\left\{ \, 
H(z) 
:=
\left( 
z^T \, \overline{\Lambda} \, z 
\right)^{p/2}%{\textstyle{\frac{p}{2}}}
{+}\;
\frac{\overline{\beta}}{2} \left\| \, z - \overline{q} \, \right\|_2^2
\right\} \, , \label{eq:proxprob2_qua} %\\
%\tilde{\beta}\; &=& \frac{\beta}{\lambda_2^{p/2}} \, , \;\;\;\: \tilde{q} \: = \: V(q-v) \, .
%\label{eq:proxprob4_qua}
\end{eqnarray}
where the vector $\overline{q} = (|\tilde{q}_1|,|\tilde{q}_1|)$ now lies in the first (open) quadrant $(0,+\infty)^2$.

We now prove that the solutions $z^*$ in \eqref{eq:proxprob2_qua} belong to the arc of hyperbola $\mathcal{H}_1$ defined in \eqref{eq:def_arc_hyperb}. To this aim, we consider the following one-parameter family of ellipses depending on a parameter $R>0$:
\begin{eqnarray}
\mathcal{E}_R 
&\;{:=}\;\,& 
\!\left\{ 
\left(z_1,z_2\right) \in \R^2\!: \;\, %[0,q_1] \times [0,q_2]: \;\; 
z^T \overline{\Lambda} z \;{=}\; R^2 
\right\}=
\!\left\{ 
\left(z_1,z_2\right) \in \R^2\!: \;\, %[0,q_1] \times [0,q_2]: \;\; 
\kappa \, z_1^2 + z_2^2 \;{=}\; R^2 
\right\} \nonumber \\
&\,\;{=}\;&
\!\left\{ 
\left(z_1,z_2\right) \in \R^2\!: \;\, %[0,q_1] \times [0,q_2]: \;\; 
z_1 = z_1(\theta;R) \!= \frac{R}{\sqrt{\kappa}} \, \cos \theta, \; z_2 = z_2(\theta;R) = R \sin \theta,~ \theta \in [0,2\pi[  
\right\}
\label{eq:ELL}
\end{eqnarray}
and, as a start, we show that the minimisers of the restriction of the function $H$ in \eqref{eq:proxprob2_qua} to any ellipse $\mathcal{E}_R$ in \eqref{eq:ELL} lie on the hyperbola $\mathcal{H}$ in \eqref{eq:Ah}. 
In Figure \ref{fig:proxa} we show the hyperbola $\mathcal{H}$ (magenta solid line) with its 
two orthogonal asymptotes, the arc $\mathcal{H}_1$ defined in \eqref{eq:def_arc_hyperb} (red solid thick line) and one ellipse $\mathcal{E}_R$ (blue dashed line) as in \eqref{eq:ELL}.
\begin{figure}[tbh]
	\center
	\includegraphics[scale = 0.6]{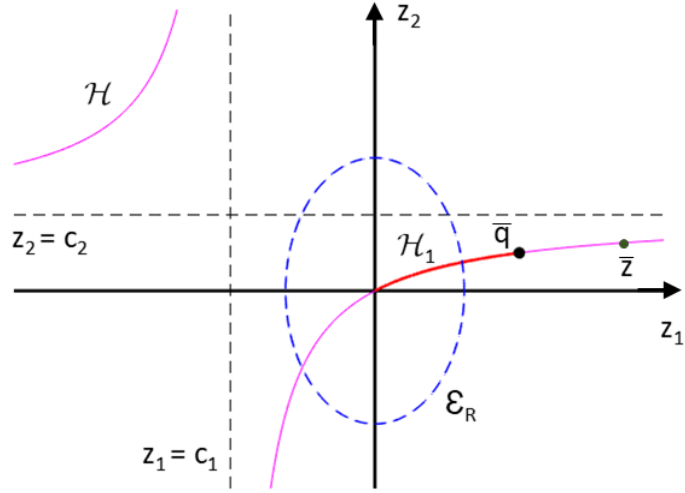}
    \caption{Graphical representation for the bivariate minimisation problem \eqref{eq:proxprob2_qua}.}
    \label{fig:proxa}
\end{figure}

Let us observe first that when restricted to an ellipse  $\mathcal{E}_R$ of the form in \eqref{eq:ELL}, the objective function $H$ depends only on $\theta$ ($R$ can be regarded as a fixed parameter). The restriction $H_R: \R \to \R$ takes then the following form
\begin{equation}
H_R(\theta;R) 
%:= H{\big|}_{\mathcal{E}_R}(\theta) 
=
R^p + \frac{\overline{\beta}}{2} \, 
\left( 
\left( \frac{R}{\sqrt{\kappa}} \, \cos \theta - \overline{q}_1 \right)^2
+
\big( R \sin \theta - \overline{q}_2 \big)^2
\right) \, . 
\label{eq:h_R}
\end{equation}
For any $R > 0$, the function $H_R$ above is clearly periodic with period $2\pi$, bounded (from below and above) and infinitely many times differentiable in $\theta$, hence the minimisers of $H_R$ can be sought for among its stationary points in the interval $[0,2 \pi)$. 
The first-order derivative of $H_R$ is as follows:
\begin{eqnarray}
H_R'(\theta;R) &\,\;{=}\;\,& 
\overline{\beta} \, 
\left( 
-\frac{R}{\sqrt{\kappa}} \sin \theta \left( \frac{R}{\sqrt{\kappa}} \cos \theta - \overline{q}_1 \right)
+ R \cos \theta \left( R \sin \theta - \overline{q}_2 \right)
\right) 
\nonumber \\
%
%
%&\,\;{=}\;\,& 
%\overline{\beta} \,  
%\left( 
%-\frac{R^2}{\kappa} \cos \theta \sin \theta + \frac{\overline{q}_1}{\sqrt{\kappa}} \, R \sin \theta + R^2 \cos \theta \sin \theta - \overline{q}_2 R \cos \theta 
%\right)  
%\nonumber \\
%
%
%&\,\;{=}\;\,& 
%\overline{\beta} \, 
%\left( 
%R^2 \frac{\kappa-1}{\kappa} \, \cos \theta \sin \theta + \frac{\overline{q}_1}{\sqrt{\kappa}} \, R \sin \theta - \overline{q}_2 R \cos \theta 
%\right)  
%\nonumber \\
%
%
%&\,\;{=}\;\,& 
%\overline{\beta} \,\, \frac{\kappa-1}{\sqrt{\kappa}} \,
%\left( 
%\frac{R}{\sqrt{\kappa}} \, \cos \theta \, R \sin \theta + 
%\frac{\overline{q}_1}{\kappa-1} \, R \sin \theta - 
%\frac{\kappa \, \overline{q}_2}{\kappa-1} \, \frac{R}{\sqrt{\kappa}} \cos \theta 
%\right) 
%\nonumber \\
%
%
%&\,\;{=}\;\,& 
%\overline{\beta} \,   \,\, \frac{\kappa-1}{\sqrt{\kappa}} \,\,\,
%\big(  
%z_1(\theta;R) z_2(\theta;R) - c_1 z_2(\theta;R) - c_2 z_1(\theta;R) 
%\, \big)
%\label{eq:uo} \\
%
%
&\,\;{=}\;\,& 
\overline{\beta} \,\, \frac{\kappa-1}{\sqrt{\kappa}} \,\,\,
\big(  
\left( z_1(\theta;R) - c_1 \right) \left( z_2(\theta;R) - c_2 \right) - c_1 c_2  
\, \big)
\label{eq:uu}
\end{eqnarray}
where \eqref{eq:uu} follows after some simple algebraic manipulations from the parametrisation in \eqref{eq:ELL}, with $c_1, c_2$ constants defined in \eqref{eq:Ah}.
Since $\beta > 0$, $\kappa > 1$ by assumption, the scalar quantity $\overline{\beta} \,(\kappa-1)/\sqrt{\kappa}$ in \eqref{eq:uu} is positive, hence we have
\begin{equation}
H_R'(\theta;R) \;{=}\; 0 \, (>0,<0) \:\;{\Longleftrightarrow}\;\:
\left( z_1(\theta;R) - c_1 \right) \left( z_2(\theta;R) - c_2 \right) - c_1 c_2 
\;{=}\; 0 \, (>0,<0) \, .
\label{eq:HRs}
\end{equation}
It thus follows that, for any fixed $R > 0$ (that is, for any ellipse $\mathcal{E}_R$ in \eqref{eq:ELL}), any stationary point $z(\theta_R^*:R)$ of $H_R$ satisfies
%
%\begin{equation}
%H_R'(\theta_R^*) = 0 
%\:\;\;{\Longleftrightarrow}\;\;\: 
%\left(z_1(\theta_R^*) - c_1 \right) \left( %z_2(\theta_R^*) - c_2 \right) = c_1 c_2 
%\quad \forall R > 0 \, ,
%\end{equation}
%
%which in other words means:
%
\begin{equation}
\big( z_1\left(\theta_R^*;R\right) , \, z_2\left(\theta_R^*;R\right) \big) 
\;{\in}\;\, \mathcal{E}_R \;{\cap}\; \mathcal{H} \, ,
\end{equation}
i.e. it belongs to the set of intersection points between the ellipse $\mathcal{E}_R$ and the hyperbola $\mathcal{H}$ (see the two intersection points in Figure \ref{fig:proxa}). It also follows from \eqref{eq:HRs} that the intersection point in the first quadrant is the global minimiser for $H_R$, whereas the one in the third quadrant is the global maximiser. 
Since previous considerations hold true for any ellipse $\mathcal{E}_R$, then any global minimiser $z^*$ of the unrestricted objective function $H$ in \eqref{eq:proxprob2_qua} must belong to the restriction of the hyperbola $\mathcal{H}$ in \eqref{eq:Ah} to the first quadrant. 

Finally, it is easy to further shrink the locus of potential global minimisers $z^*$ to the arc $\mathcal{H}_1$ defined in \eqref{eq:def_arc_hyperb}. 
Let us argue by contradiction and suppose there exists a global minimiser $\bar{z}$ belonging to the restriction of the hyperbola $\mathcal{H}$ to the first quadrant but not to $\mathcal{H}_1$ - see Figure \ref{fig:proxa}. We have:
\begin{equation}
H(\bar{z})-H(\bar{q}) 
=
\underbrace{
\left( 
\bar{z}^T \, \overline{\Lambda} \, \bar{z} 
\right)^{p/2}%{\textstyle{\frac{p}{2}}}
{-}\; 
\left( 
\bar{q}^T \, \overline{\Lambda} \, \bar{q} 
\right)^{p/2}
}_{>0}
{+}\;
\underbrace{
\frac{\overline{\beta}}{2} 
\left( 
\left\| \, \bar{z} - \overline{q} \, \right\|_2^2
{-}\; 
\left\| \, \overline{q} - \overline{q} \, \right\|_2^2
\right)
}_{>0}
\;{>}\; 0,
\end{equation}
whence $\overline{z}$ can not be a global minimiser for the function $H$.
%\textcolor{red}{Rimane da dimostrare che tra i tutti punti stazionari, che devono appunto appartenere all'iperbole $\mathcal{H}$, i minimizers globali 
%della restrizione $H_R(\theta)$ giacciono sul ramo dell'iperbole nel primo quadrante.}
\end{proof}

In the following corollary we exploit and complete the  results in previous Proposition \ref{prop:prox_sol1} by showing how the bivariate minimisation problem in \eqref{eq:propprox1} can be reduced to an equivalent univariate problem.
\begin{corollary}
The minimisers $z^* \in \R^2$ in \eqref{eq:propprox1} can be obtained as follows:
\begin{equation}
z^* = 
\left( 
z_1^*,
c_2 \left( \frac{z_1^*}{z_1^* - c_1} \right) 
\right) \, ,
\end{equation}
with $c_1$, $c_2 \in \R$ defined in \eqref{eq:Ah} and $z_1^* \in \R$ the solution(s) of the following $1$-dimensional constrained minimisation problem:
\begin{equation}
z_1^* \,\;{\in}\;\, \argmin_{\xi \;{\in}\; [0,\bar{q}_1]}
\, \left\{ \,
h(\xi) 
\;{:=}\; \left( h_1(\xi) \right)^{p/2}
\:{+}\;\,
\frac{\overline{\beta}}{2} \,
h_1(\xi)
\,\;{-}\;\,
\frac{\overline{\beta}}{2} \,
h_2(\xi)
\, \right\} \, ,
\end{equation}
\begin{equation}
h_1(\xi) = \xi^2 \left( \kappa + \frac{c_2^2}{(\xi-c_1)^2} \right), \quad
h_2(\xi) = \xi \, (\kappa-1) \left( \xi - 2 c_1 + 2 \, \frac{c_2^2}{\kappa(\xi-c_1)} \right) \, .
\end{equation}
\end{corollary}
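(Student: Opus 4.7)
The plan is to use Proposition \ref{prop:prox_sol1} as a starting point: since every global minimiser $z^*$ of $H$ necessarily lies on the arc $\mathcal{H}_1$, the 2D unconstrained problem in \eqref{eq:propprox1} reduces to a 1D problem once we parametrise $\mathcal{H}_1$. The natural parametrisation is by the first coordinate $z_1 = \xi$. Solving the hyperbola equation $(z_1-c_1)(z_2-c_2)=c_1c_2$ for $z_2$ yields
\[
z_2 \;=\; c_2 + \frac{c_1 c_2}{\xi - c_1} \;=\; c_2 \left(\frac{\xi}{\xi-c_1}\right),
\]
which is precisely the expression claimed in the corollary. The constraint $z \in \mathcal{H}_1$ then translates to $\xi \in [0,\bar{q}_1]$ (one readily checks that this range of $\xi$ yields $z_2 \in [0,\bar{q}_2]$, thanks to the signs of $c_1,c_2$ and the monotonicity of $\xi \mapsto c_2\xi/(\xi-c_1)$ on this interval).

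The next step is to substitute this parametrisation into $H(z) = (z^T\overline{\Lambda} z)^{p/2} + \tfrac{\overline{\beta}}{2}\|z-\overline{q}\|_2^2$. For the first term, since $\overline{\Lambda}=\diag(\kappa,1)$,
\[
z^T \overline{\Lambda} z \;=\; \kappa \xi^2 + z_2^2 \;=\; \xi^2\!\left(\kappa + \frac{c_2^2}{(\xi-c_1)^2}\right) \;=\; h_1(\xi),
\]
which matches the definition of $h_1$. For the quadratic distance term, one expands
\[
\|z-\overline{q}\|_2^2 \;=\; \xi^2 + z_2^2 - 2\xi\overline{q}_1 - 2 z_2 \overline{q}_2 + \overline{q}_1^2 + \overline{q}_2^2.
\]

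The slightly delicate point — and essentially the only real computation in the proof — is to recognise that the $\xi$-dependent part of $\|z-\overline{q}\|_2^2$ equals $h_1(\xi) - h_2(\xi)$. To see this, I use the definitions $c_1 = -\overline{q}_1/(\kappa-1)$ and $c_2 = \kappa\overline{q}_2/(\kappa-1)$ from \eqref{eq:Ah}, which give $\overline{q}_1 = -(\kappa-1)c_1$ and $\overline{q}_2 = (\kappa-1)c_2/\kappa$. Substituting and using $z_2 = c_2\xi/(\xi-c_1)$, the cross terms become
\[
-2\xi\overline{q}_1 \;=\; 2\xi(\kappa-1)c_1, \qquad
-2 z_2 \overline{q}_2 \;=\; -\frac{2(\kappa-1)\xi c_2^2}{\kappa(\xi-c_1)},
\]
and collecting these with $\xi^2 + z_2^2 = h_1(\xi) - (\kappa-1)\xi^2$ yields exactly $h_1(\xi) - h_2(\xi)$ after grouping the $(\kappa-1)$ factor; concretely,
\[
h_2(\xi) \;=\; (\kappa-1)\xi^2 - 2\xi(\kappa-1)c_1 + \frac{2(\kappa-1)\xi c_2^2}{\kappa(\xi-c_1)}.
\]

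The final step is then straightforward: the constant $\overline{q}_1^2 + \overline{q}_2^2$ does not affect the argmin, so
\[
\argmin_{z \in \mathcal{H}_1} H(z) \;=\; \Bigl\{\bigl(\xi^*,\, c_2 \xi^*/(\xi^* - c_1)\bigr)\,:\, \xi^* \in \argmin_{\xi \in [0,\overline{q}_1]} h(\xi)\Bigr\},
\]
which together with the identification $t^* = V^T(s \circ z^*)$ from \eqref{eq:propprox1} completes the proof. The main obstacle is purely the bookkeeping in verifying $\|z-\overline{q}\|_2^2 - (\overline{q}_1^2 + \overline{q}_2^2) = h_1(\xi) - h_2(\xi)$; no further tools beyond Proposition \ref{prop:prox_sol1} and elementary algebra are needed.
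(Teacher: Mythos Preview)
Your proposal is correct and follows exactly the approach the paper itself takes: parametrise the arc $\mathcal{H}_1$ by $z_1=\xi$, solve the hyperbola equation for $z_2$, substitute into $H$, and simplify. The paper's own proof is a one-line sketch (``derive $z_2$ as a function of $z_1$ from $\mathcal{H}$, substitute into $H$, carry out some algebraic manipulations''), and you have simply filled in those manipulations accurately, including the identification $\|z-\overline{q}\|_2^2 = h_1(\xi)-h_2(\xi)+\mathrm{const}$.
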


\begin{proof}
The proof is immediate by deriving the expression of $z_2$ as a function of $z_1$ from the definition of the hyperbola $\mathcal{H}$ in \eqref{eq:Ah}, then substituting this expression in the objective function $H$ in \eqref{eq:def_arc_hyperb} and, finally, carrying out some algebraic manipulations.
\end{proof}

\section{Parameters estimation results}  \label{sec:parameter_estimation_results}
In this section, an extensive evaluation on the accuracy of the ML estimation procedure described in Section \ref{sec:parameter_estimation} is carried out. 
In order to assess the quality of the estimation, we introduce in the following some useful statistical notions.
\begin{definition}  \label{def:errors}
Let $\omega>0$ be an unknown parameter of a fixed probability distribution $p_\omega$ and for $\ell>0$ let $\omega_j,~ j=1,\ldots,\ell$ be estimates of $\omega$ obtained by a given estimation procedure. The \textbf{sample estimator} $\hat{\omega}$ of $\omega$ is defined as the average:
$$
\hat{\omega}:=\frac{\sum_{j=1}^\ell \omega_j}{\ell}.
$$
We can then define the \textbf{relative bias} $\mathcal{B}_{\hat{\omega}}$, the \textbf{empirical variance} $\mathcal{V}_{\hat{\omega}}$ and the \textbf{relative root mean square error} $rmse_{\hat{\omega}}$ of the estimator $\hat{\omega}$ as:
%is defined as the relative expected valued of the difference between $\omega$ and its estimation $\hat{\omega}$, i.e.:
\begin{equation}
\mathcal{B}_{\hat{\omega}}: = \frac{\mathbb{E}(\hat{\omega} - \omega )}{\omega},\quad \mathcal{V}_{\hat{\omega}} := \frac{1}{\ell - 1} \,\sum_{j=1}^{\ell} \,(\;\omega_{j} - \hat{\omega}\;)^2,\quad rmse_{\hat{\omega}} := \frac{\sqrt{\,\mathcal{V}_{\hat{\omega}}\,+\,\mathcal{B}_{\hat{\omega}}^{2}}}{\omega}.
\end{equation}
\end{definition}

%\red{LUCA: RELATIVE QUANTITIES ARE NOT DEFINED}
In the following, the accuracy and the precision of the estimator is evaluated by analysing its performance on the estimation of the parameters $(p,{e^{(1)}},\theta)$. As discussed in Section \ref{sec:compact}, parameters ${e^{(1)}},\theta$ can be derived from $\varrho,\phi$ and from \eqref{def:lambda_e1}, we recall that ${e^{(1)}} = \bigg(\frac{1}{\lambda^{(1)}}\bigg)^{2}.
$
In addition, we also consider how the quality of the estimation of $(p,{e^{(1)}},\theta)$ affects the estimation of the scale parameter $m$, which is computed directly via the formula \eqref{eq:expr_m} as a non-linear function of  $(p,\phi,\varrho)$ or, equivalently, of $(p,{e^{(1)}},\theta)$, as well as of the samples. The non-linearity may affect the accuracy of its estimation.

\subsection{Parameter estimation: accuracy and precision} \label{sec:est}

We now perform some tests assessing the accuracy and the precision of the ML estimation procedure proposed in Section \ref{sec:parameter_estimation} in terms of the quantities defined above. As a first test we compare the results obtained by applying the ML procedure to estimate a BGGD of parameters $(\bar{p},\bar{e}^{(1)},\bar{\theta},\bar{m})=(1,1.4,\ang{45},0.3)$. We run our tests for an increasing number $N\in \left\{10, 10^2, 10^3, 10^4, 10^5, 10^6\right\}$ of samples drawn from the distribution. For each value of $N$, the estimation procedure is run $\ell = 200$ times. For any  $j=1,\dots,\ell$ we estimate the parameter triple $(p^*,\phi^*,\varrho^*)_j$ and consider the corresponding estimators of the true parameters as  defined in Definition \ref{def:errors}. 
%We recall that:
%\begin{equation}
%({e^{(1)}})_i = 1 + \varrho_i, \quad \theta_i = \text{atan}((v_1)^2,(v_1)^1), \quad m_i = \bigg(\frac{p_i}{4N}\sum_{j=1}^{N}(x_j^{T}\Sigma_i^{-1}x_j)^{p_1/2}\bigg)^{2/p_i},
%\end{equation}
%where $v_1$ is the eigenvector corresponding to the maximum eigenvalue of $\Sigma_i$ and depending on $\phi_i$, and $\{x_j\}_{j=1}^{N}$ is the set of cardinality $N$ drawn from the BGGD.
The results are shown in Figures \ref{fig:relbias1} - \ref{fig:relrmse1}. 

\begin{figure}[tbh]
	%	\flushleft
	\center
	\begin{tabular}{cc}
		\includegraphics[scale = 0.4]{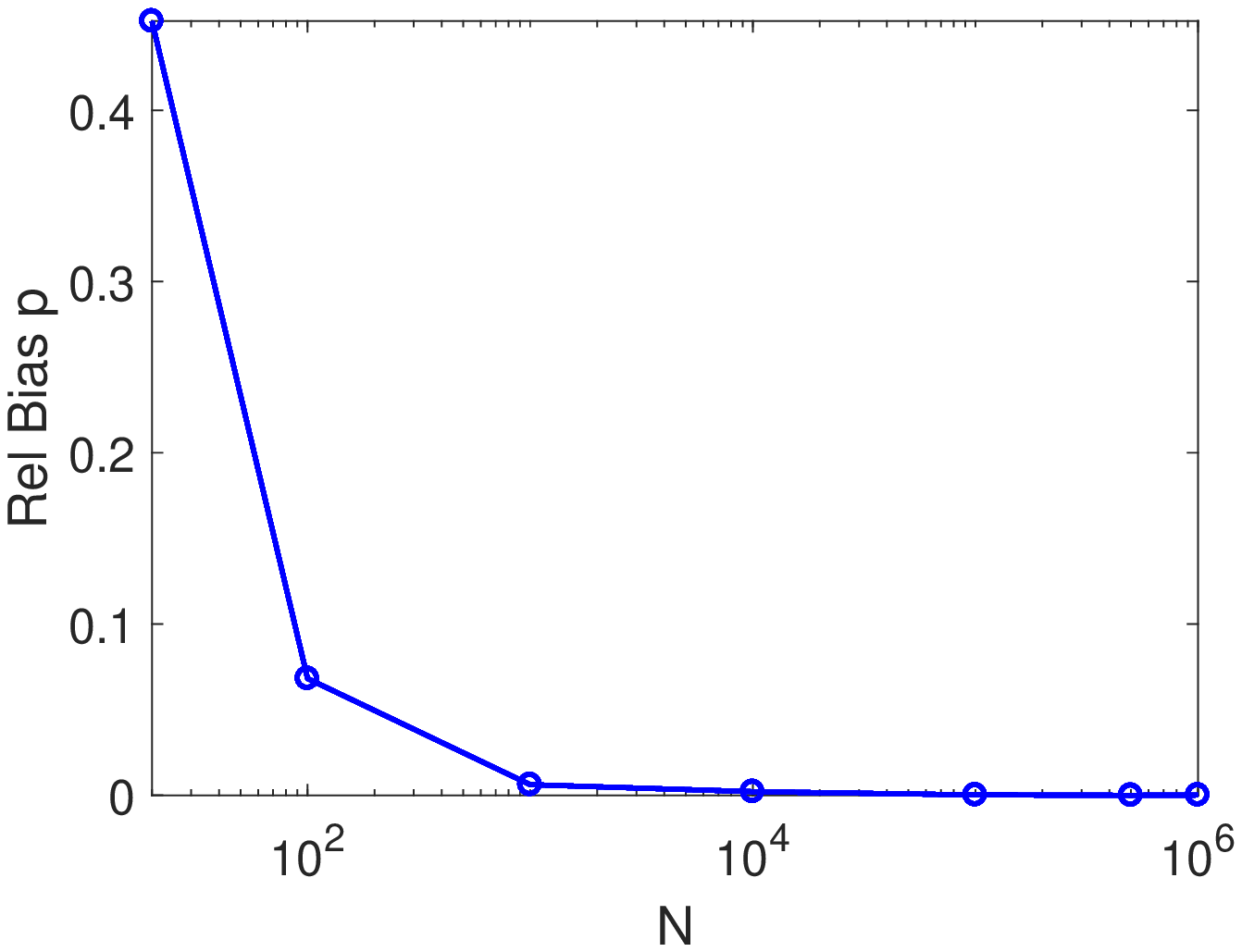} &
		\includegraphics[scale = 0.4]{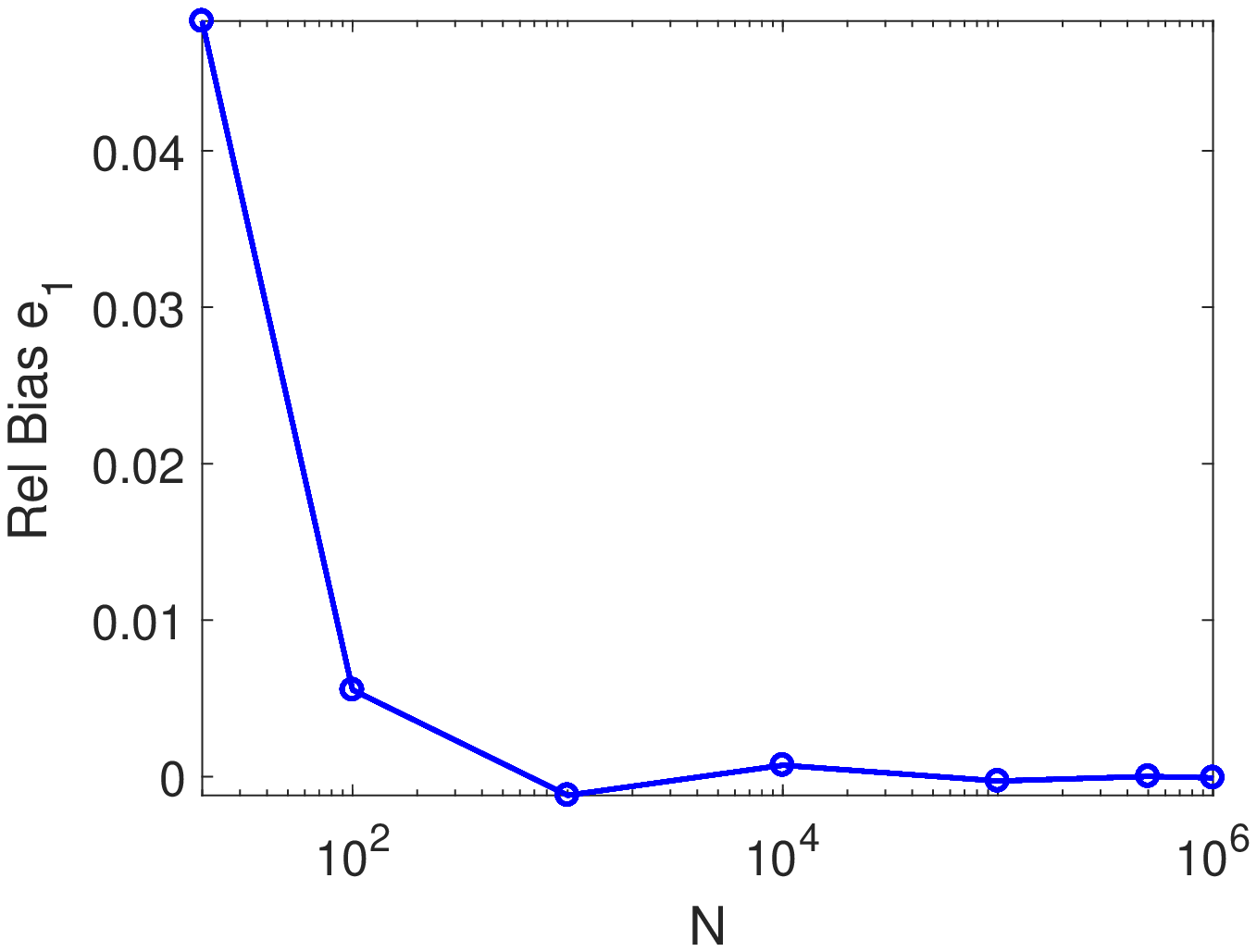}\\
		\includegraphics[scale = 0.4]{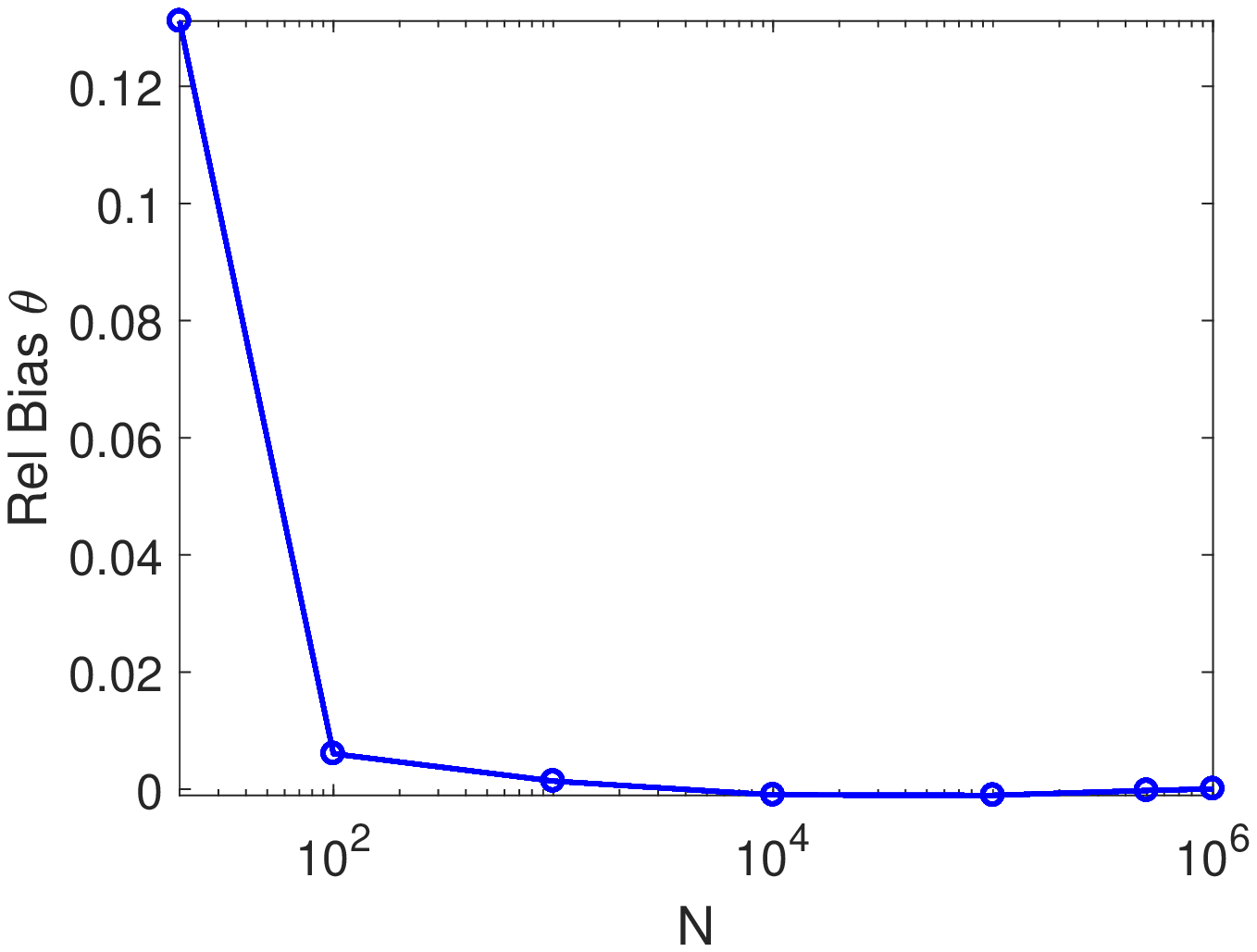}&
        \includegraphics[scale = 0.4]{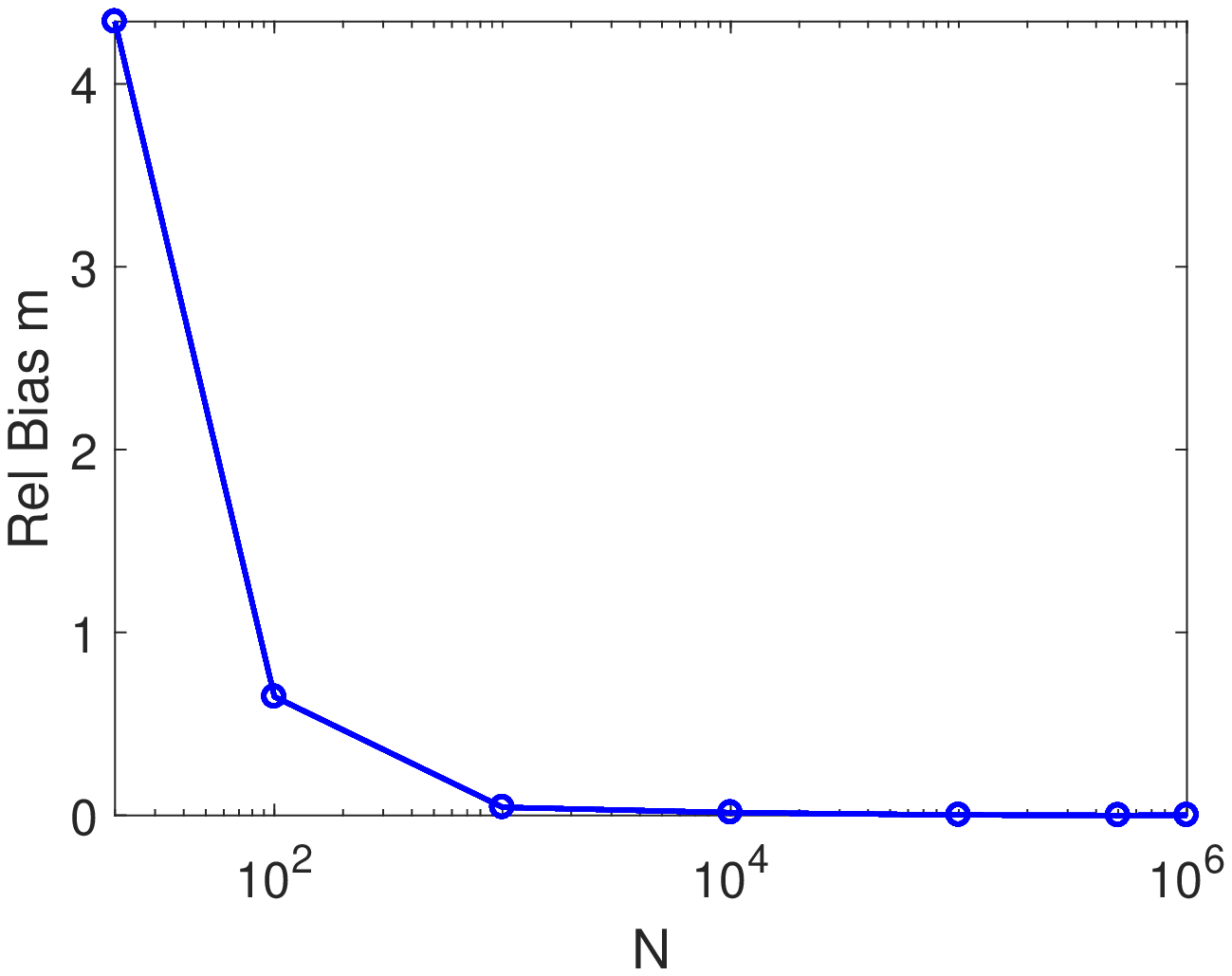}\\
	\end{tabular}
    \caption{Plots of relative bias for estimated $(p^*, {{e^{(1)}}}^*, \theta^*, m^*)$ in semi-logarithmic scale on $x$-axis.}
    	\label{fig:relbias1}
\end{figure}

\begin{figure}[tbh]
	%	\flushleft
	\center
	\begin{tabular}{cc}
		\includegraphics[scale = 0.4]{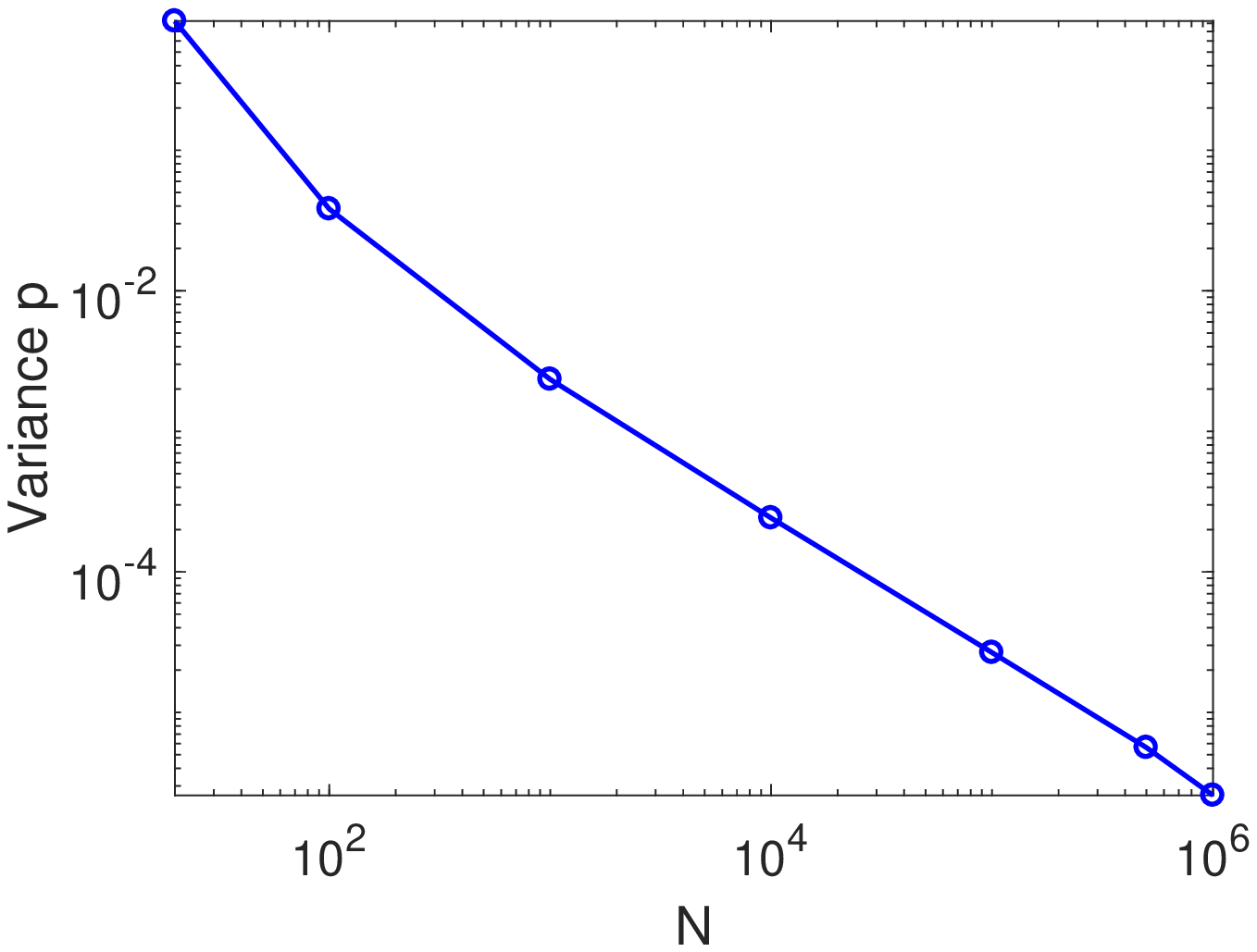} &
		\includegraphics[scale = 0.4]{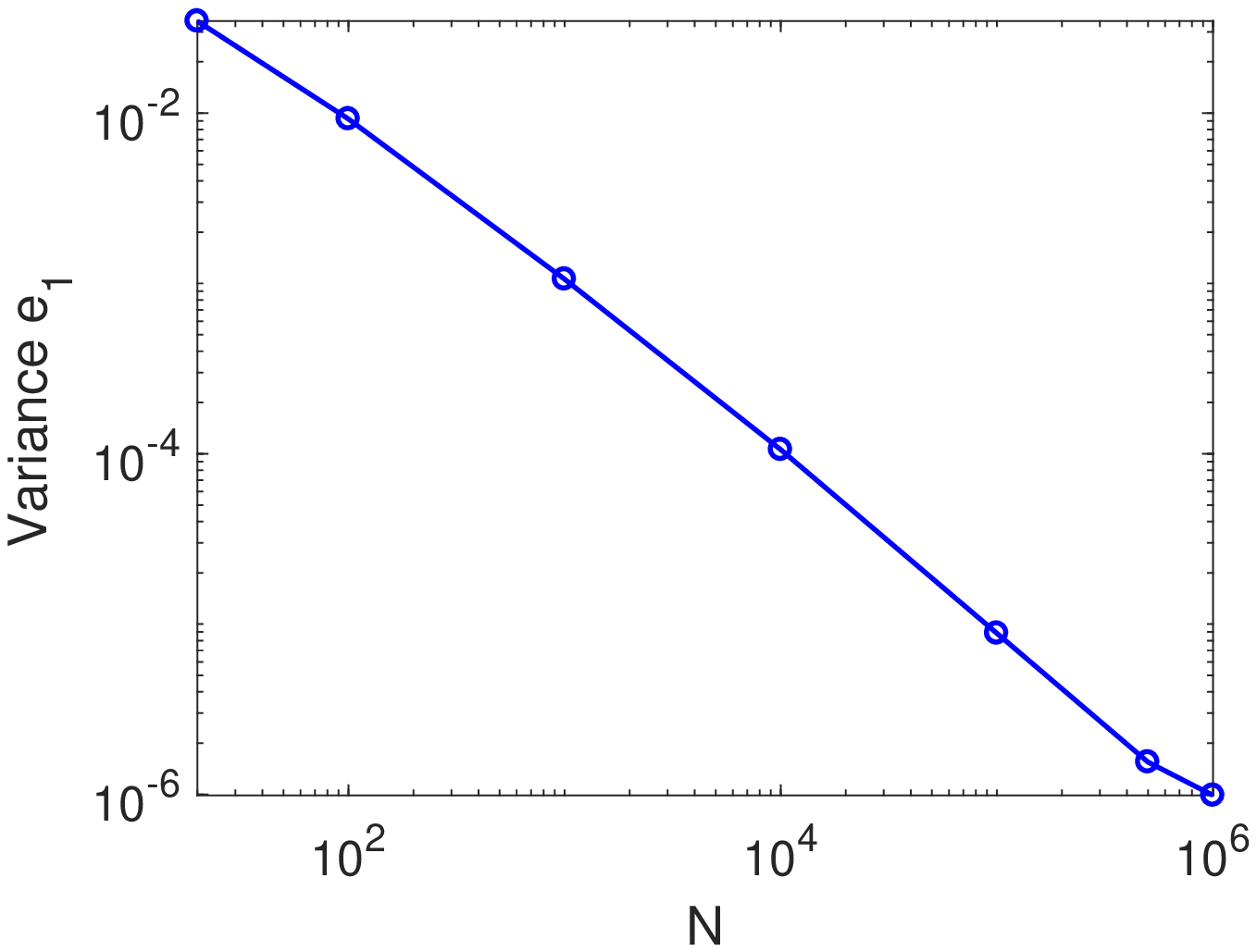}\\
		\includegraphics[scale = 0.4]{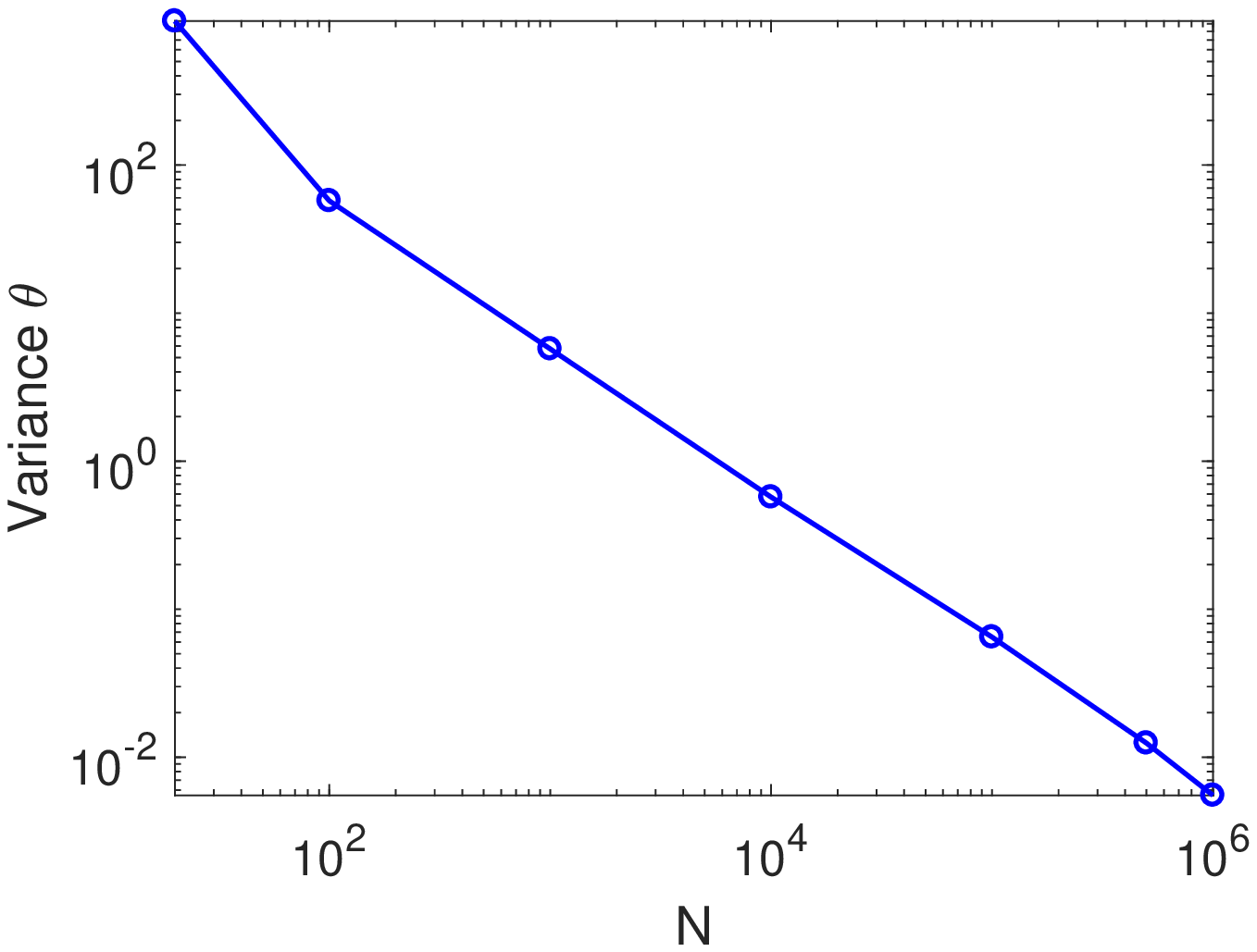}&
        \includegraphics[scale = 0.4]{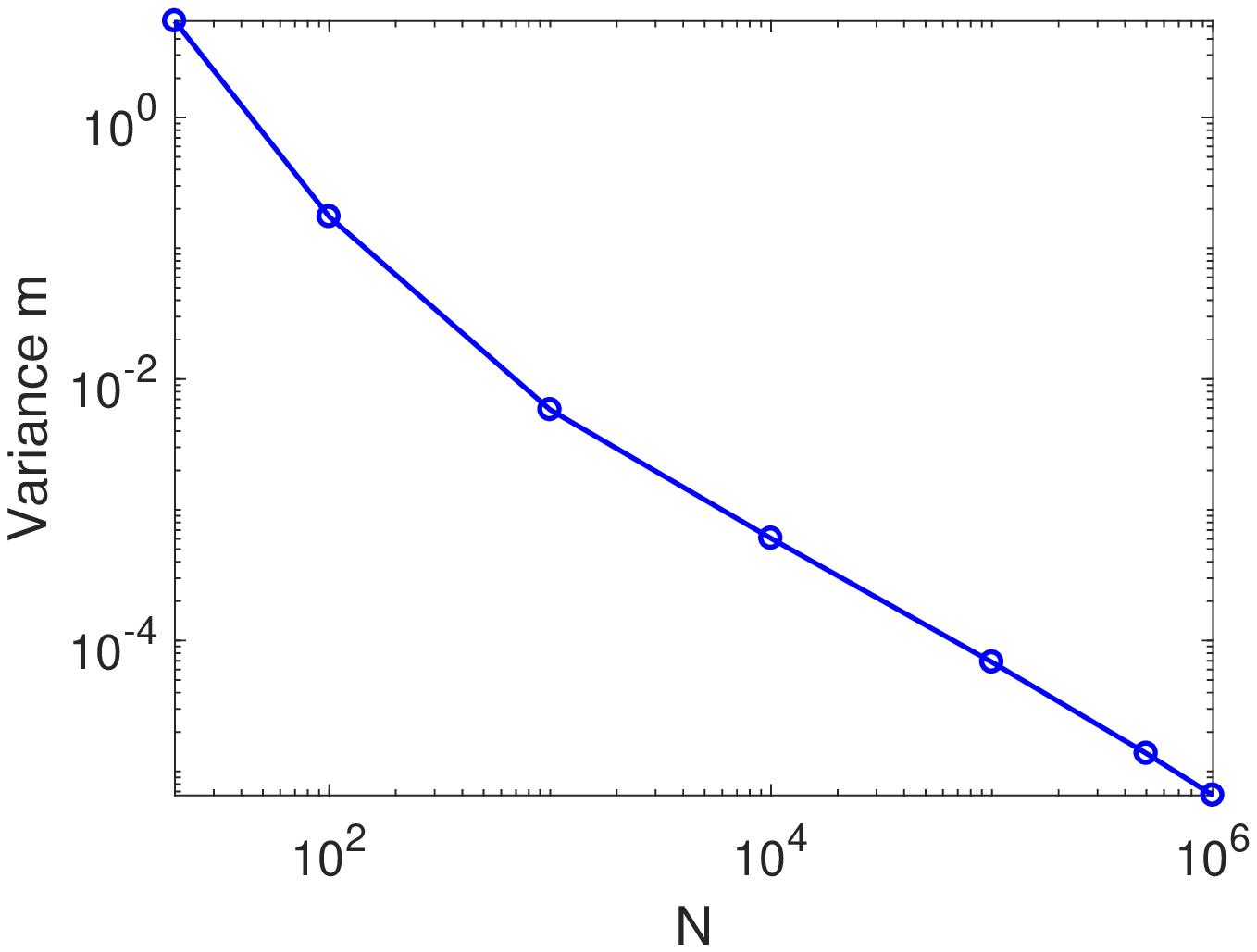}\\
	\end{tabular}
    \caption{Plot of the empirical variance for estimated $(p^*, {{e^{(1)}}}^*, \theta^*, m^*)$ in semi-logarithmic scale on $x$-axis.}
    	\label{fig:var1}

	\end{figure}

\begin{figure}[tbh]
	%	\flushleft
	\center
	\begin{tabular}{cc}
		\includegraphics[scale = 0.4]{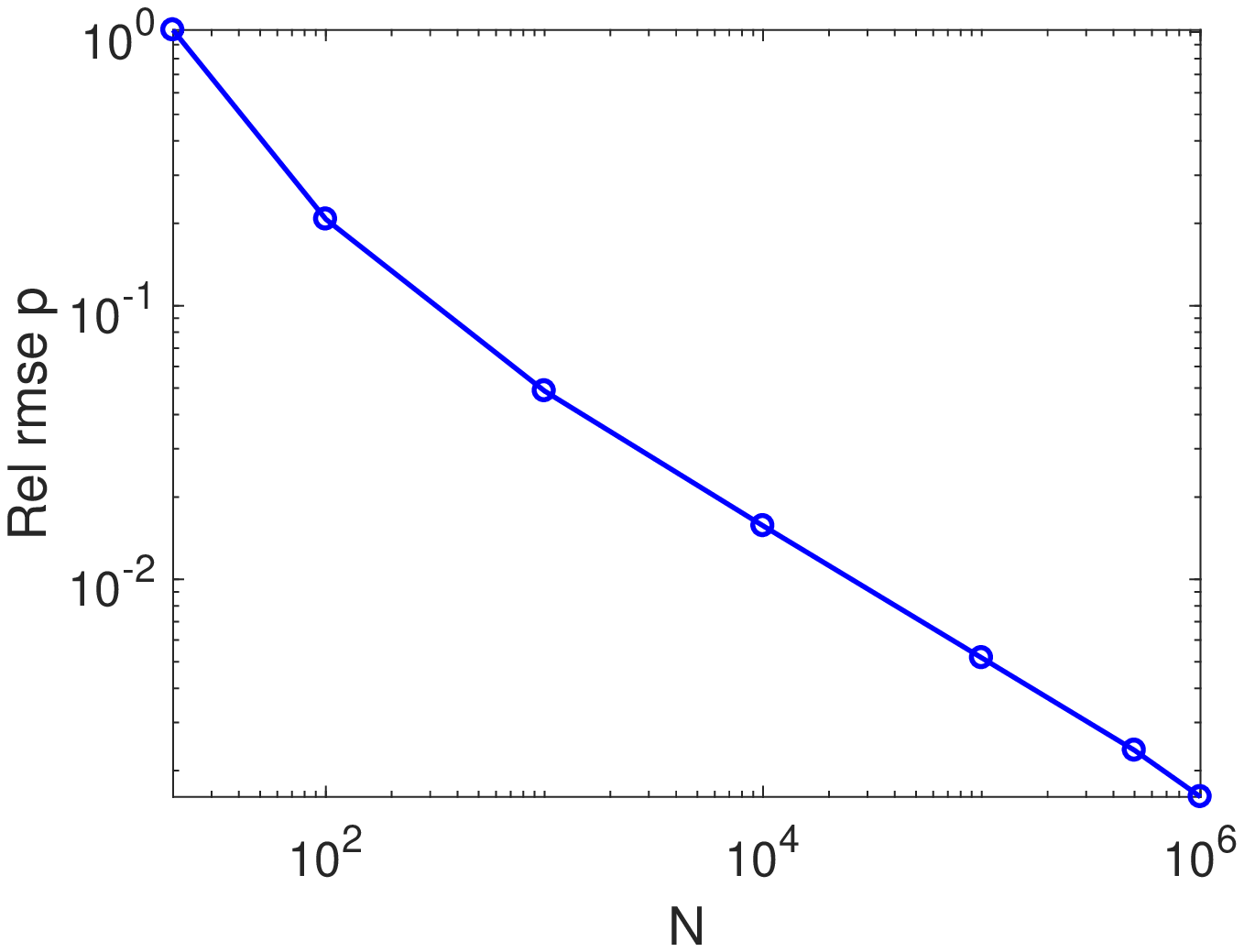} &
		\includegraphics[scale = 0.4]{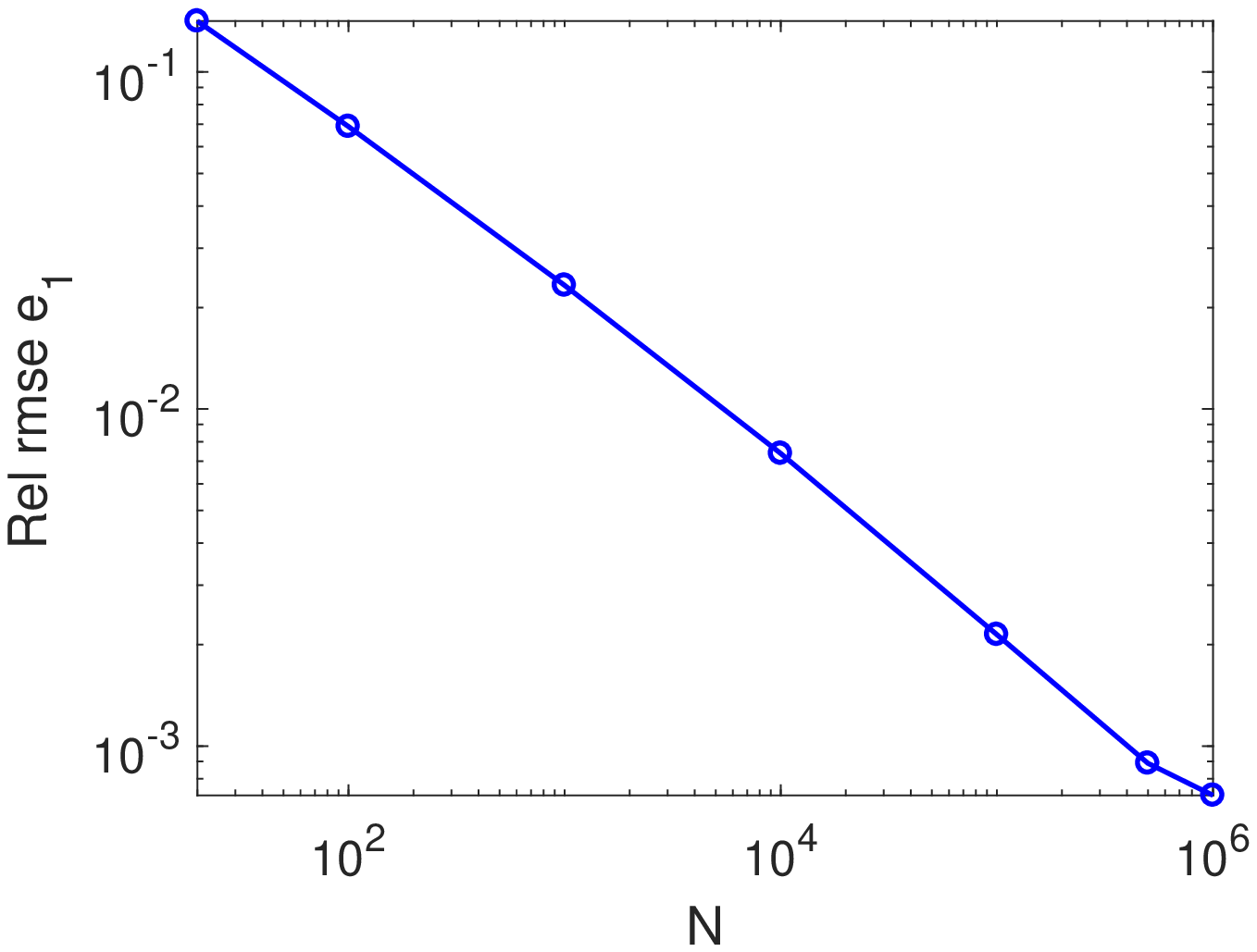}\\
		\includegraphics[scale = 0.4]{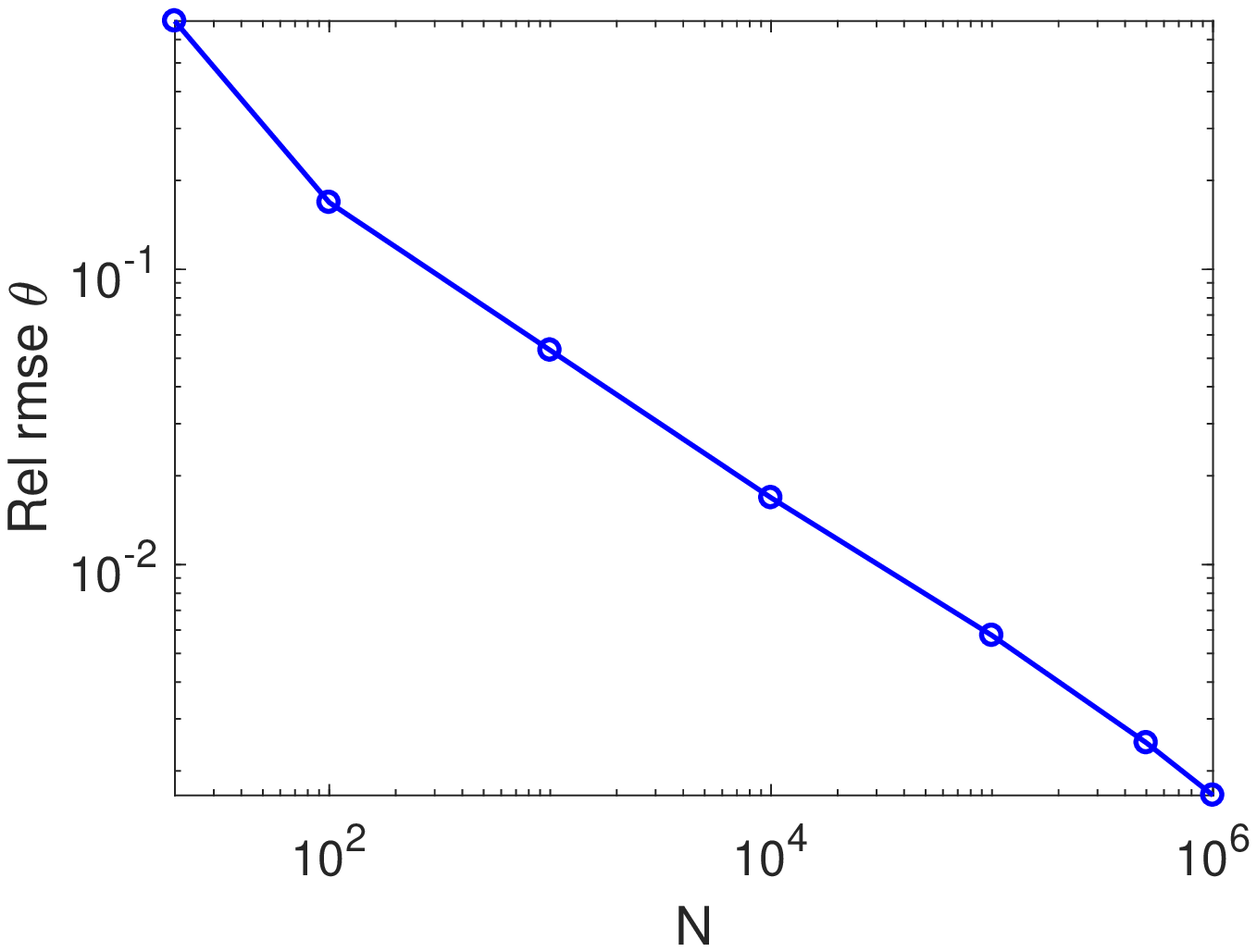}&
        \includegraphics[scale = 0.4]{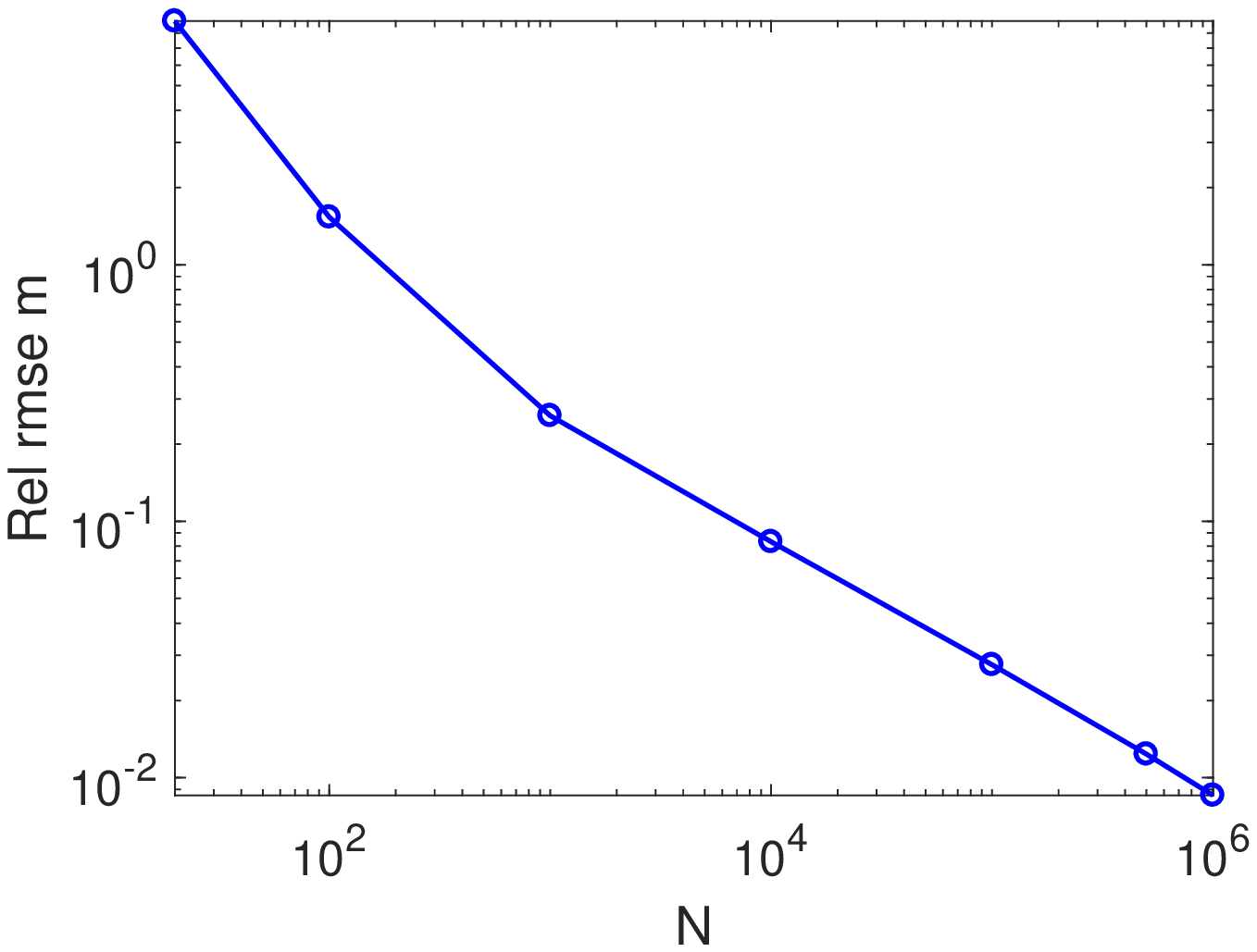}\\
	\end{tabular}
    \caption{Plot of relative root mean square error for estimated $(p^*, {{e^{(1)}}}^*, \theta^*, m^*)$ in semi-logarithmic scale on $x$-axis.}
    	\label{fig:relrmse1}
	\end{figure}

For all parameters (including the scale parameter $m$), the behaviour of relative bias, variance and relative root mean square error as the number of samples increases reveals good precision and accuracy. In particular, low values of such error quantities are already obtained when $N \approx 10^2$.

%, as an index of the fact that the choice of a small image neighbourhood around each pixel $i$, $1 \leq i \leq n$ can lead to reasonable results. This is essential in order to preserve the local properties of the image. The following tests will clarify in this direction. 

\subsection{Parameter estimation on synthetic neighbourhoods}

We now test the ML estimation procedure on a simple synthetic image reported in Figure \ref{fig:edgegeom1}. Here, the goal is to evaluate the effectiveness of the estimation when discriminating between different image regions such as edges, corners and circular profiles in terms of the functional shape of the estimated BGGD. In the following test, we estimate the parameters of the unknown BGGD in three different situations where a pixel surrounded by a $11\times 11$ neighbourhood is chosen to lie on a vertical edge (Fig. \ref{fig:edgegeom}), a corner (Fig. \ref{fig:cornergeom}) and on a circular profile (Fig. \ref{fig:circlegeom}). In order to avoid degenerate configurations of the gradients, such as the ones described in \eqref{eq:configurations}, we preliminary corrupt the image by a small Additive White Gaussian noise (AWGN) with $\sigma=0.03$.

\paragraph{Edge points} In Fig. \ref{fig:edgegeom2}, we report the scatter plot of the gradients of the edge points in the red-bordered region countered in Figure \ref{fig:edgegeom1}, which, as expected, shows its distribution along the $x$-axis.
%Clearly, the misalignment is due to the presence of noise.
The parameter estimation procedure of the BGGD at one of such edge points is run by taking $121$ samples of gradients in the $11 \times 11$ neighbourhood. The estimation procedure results in the following parameters $(p^*,~{{e^{(1)}}}^*,~\theta^*,~m^*)=(0.07,~1.60,~\ang{-177.82},~2*10^{-5})$. Note that the low value of the parameter $p$ leads to a very fat tail distribution, as shown in Fig. \ref{fig:edgegeom3}. The orientation and the eccentricity of the level curves are in line with the clear directionality of the samples as it can be seen in Figure \ref{fig:edgegeom4}.

\begin{figure}[h!]
	%	\flushleft
	\centering
	\begin{subfigure}[t]{0.45\textwidth}
	    \centering
		\includegraphics[scale = 0.56]{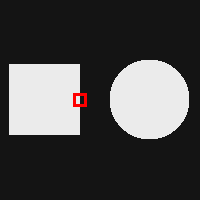}
		\caption{}
		\label{fig:edgegeom1}
    \end{subfigure}
    \begin{subfigure}[t]{0.45\textwidth}
    	\centering
		\includegraphics[scale = 0.35]{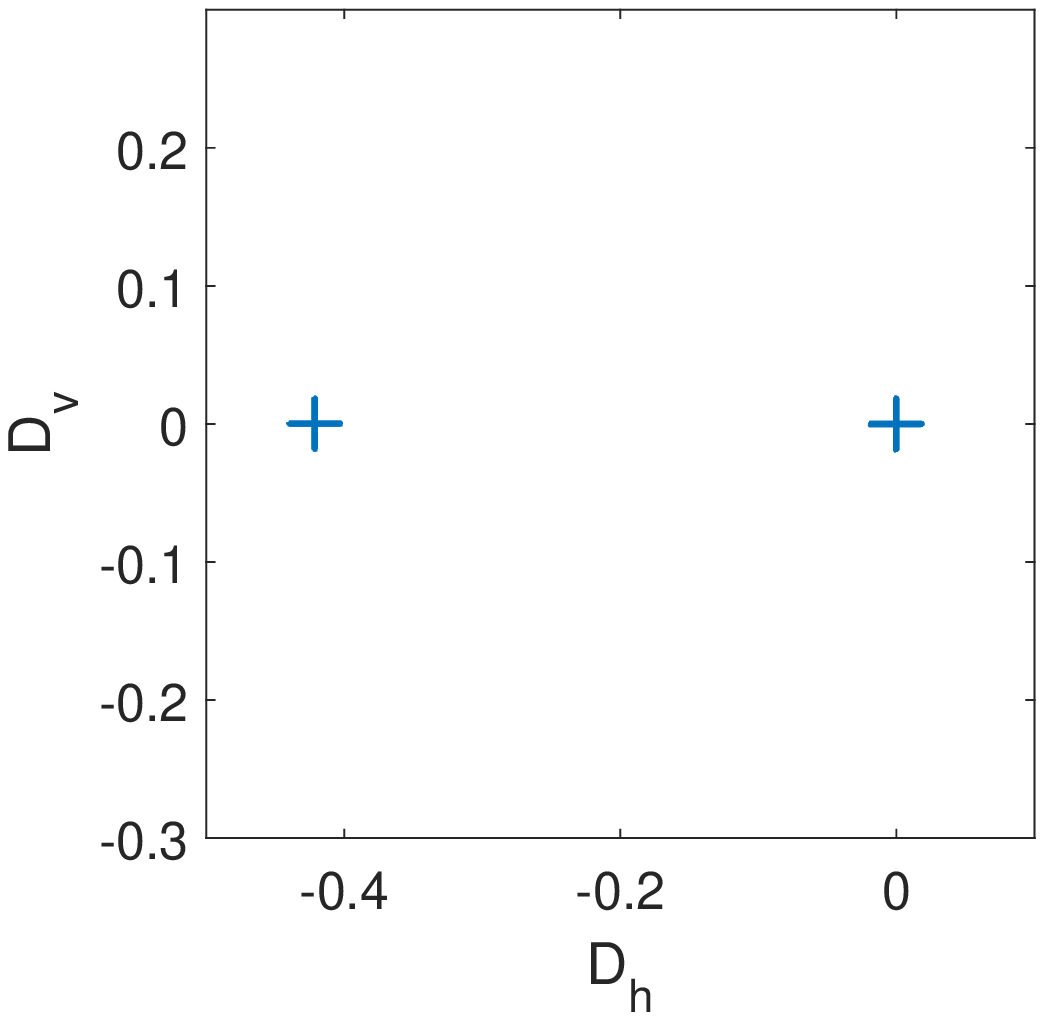}
		\caption{}
		\label{fig:edgegeom2}
    \end{subfigure}\\
    \begin{subfigure}[t]{0.45\textwidth}
    	    \centering
	\includegraphics[scale = 0.4]{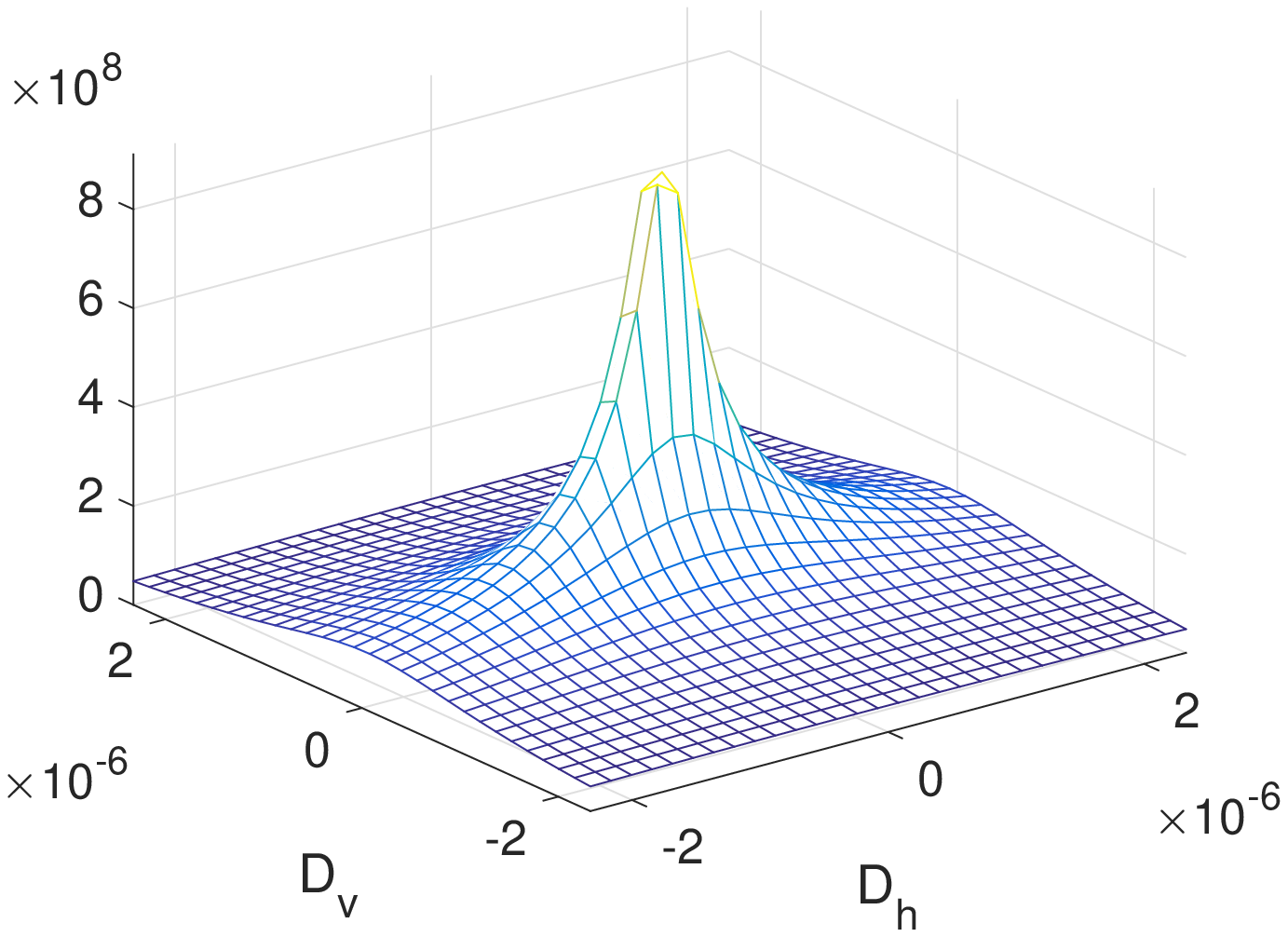}
    \caption{}
    \label{fig:edgegeom3}
    \end{subfigure}
    \begin{subfigure}[t]{0.45\textwidth}
    \centering
        \includegraphics[scale = 0.35]{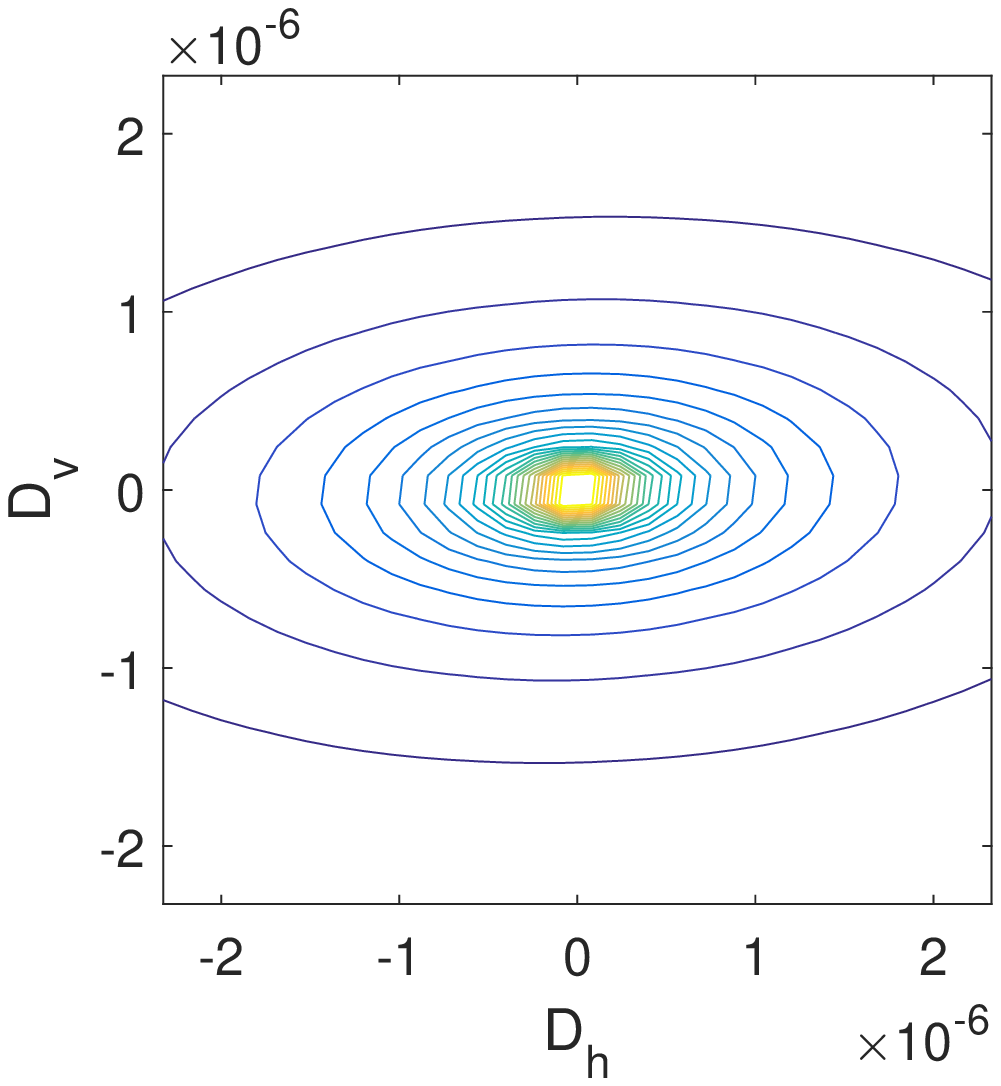}
        \caption{}
        \label{fig:edgegeom4}
    \end{subfigure}
	\caption{\ref{fig:edgegeom1}: BGGD Parameter estimation for a synthetic geometrical image. Test for edge image pixel. \ref{fig:edgegeom2}: Scatter plot of the gradients in the read-bordered region. \ref{fig:edgegeom3}: PDF with estimated parameters $(p^*,~{e^{(1)}}^*,~\theta^*,~m^*)=(0.07,~1.60,~\ang{-177.82},~2*10^{-5})$. \ref{fig:edgegeom4}: Level curves of the estimated PDF. }
	\label{fig:edgegeom}
\end{figure}

\paragraph{Corner points} For the corner example in Figure \ref{fig:cornergeom}, the scatter plot of the gradients is reported in Figure \ref{fig:cornergeom2}. The ML procedure results in this case in the estimation $(p^*,~{e^{(1)}}^*,~\theta^*,~m^*)$ = $ (0.07,~1.08,~\ang{72.49},~3*10^{-7})$. The estimated PDF is reported in Fig. \ref{fig:cornergeom3}. Similarly as before, note that a very fat-tail distribution is estimated. On the other hand, since ${e^{(1)}}^*\approx 1$, we also have ${e^{(2)}}^* \approx 1$ and the eccentricity of the ellipse $\epsilon\approx 0$. We can conclude that, in this case, the distribution is almost isotropic and the angle $\theta$ has a negligible influence on the orientation of the level curves as it can be seen in Figure \ref{fig:edgegeom4}.

\begin{figure}[h!]
	%	\flushleft
	\centering
	\begin{subfigure}[t]{0.45\textwidth}
	    \centering
		\includegraphics[scale = 0.56]{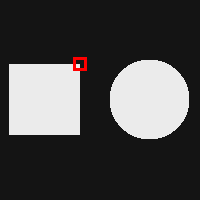}
		\caption{}
	\label{fig:cornergeom1}
    \end{subfigure}
    \begin{subfigure}[t]{0.45\textwidth}
    	\centering
		\includegraphics[scale = 0.35]{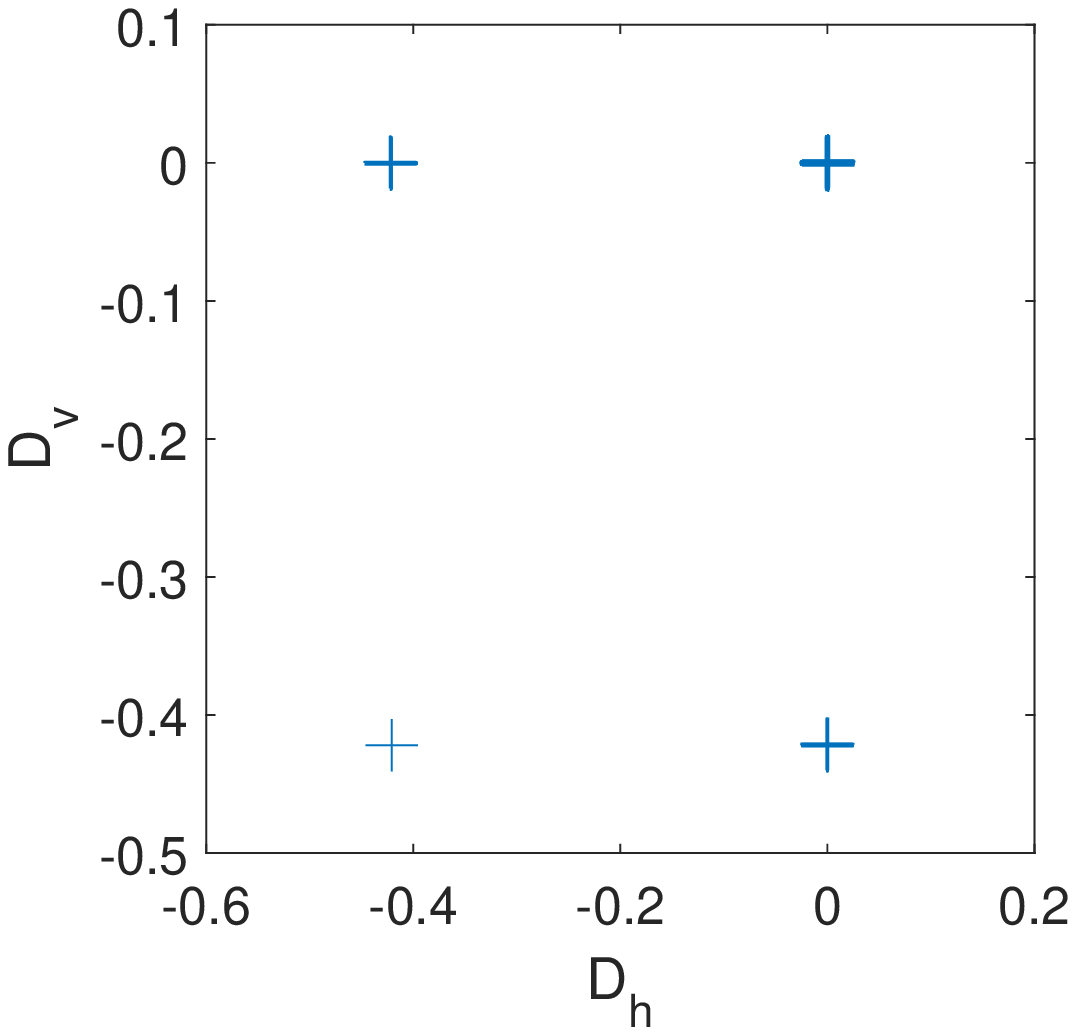}
		\caption{}
	\label{fig:cornergeom2}
    \end{subfigure}\\
    \begin{subfigure}[t]{0.45\textwidth}
    	    \centering
	\includegraphics[scale = 0.4]{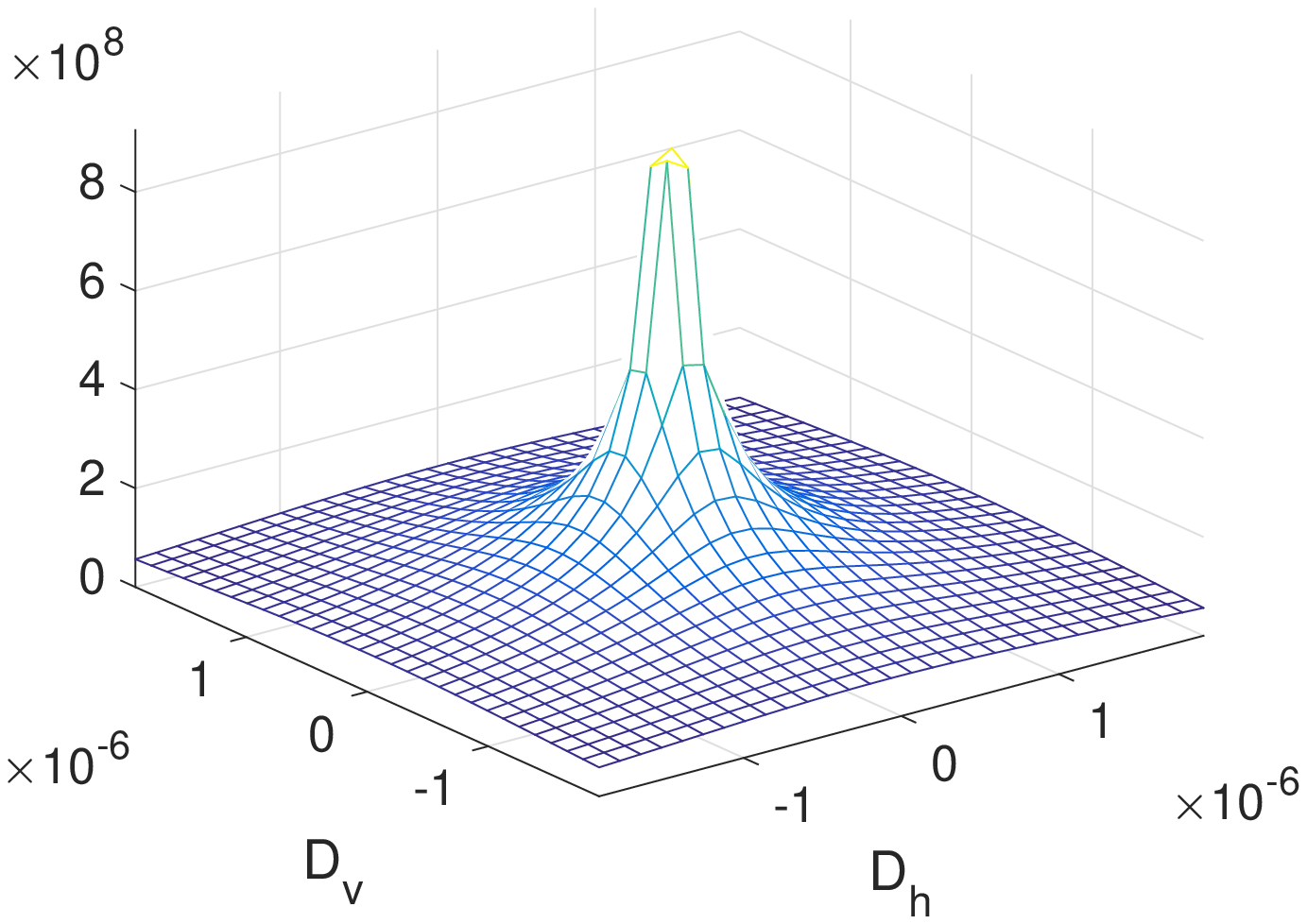}
    \caption{}
    \label{fig:cornergeom3}
    \end{subfigure}
    \begin{subfigure}[t]{0.45\textwidth}
    \centering
        \includegraphics[scale = 0.35]{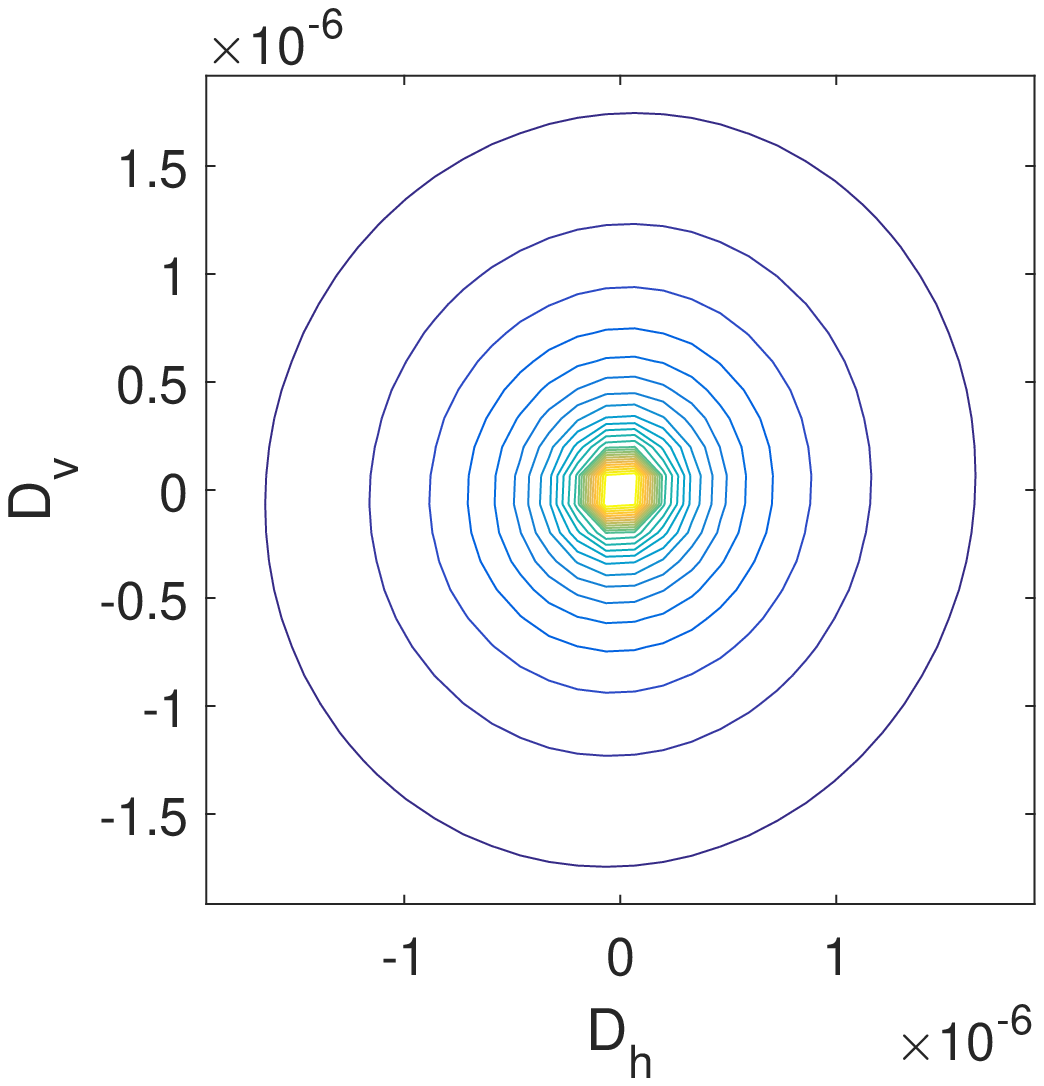}
        \caption{}
    \label{fig:cornergeom4}
    \end{subfigure}
	\caption{\ref{fig:cornergeom1}: BGGD Parameter estimation for a synthetic geometrical image. Test for corner image pixel. \ref{fig:cornergeom2}: Scatter plot of the gradients in the read-boarded region. \ref{fig:cornergeom3}: PDF with estimated parameters $(p^*,~{e^{(1)}}^*,~\theta^*,~m^*)$ = $ (0.07,~1.08,~\ang{72.49},~3*10^{-7})$ . \ref{fig:cornergeom4}: Level curves of the estimated PDF. }
	\label{fig:cornergeom}
\end{figure}

\paragraph{Circle points} Finally, we consider the ML parameter estimation procedure in correspondence with a pixel lying on a circular profile, see Figure \ref{fig:circlegeom}. In this case, the estimated parameters are $(p^*,{e^{(1)}}^*, \theta^*,m^*)$ = $(0.08,1.44,\ang{49.28},2*10^{-6})$. The values obtained for ${e^{(1)}}^*$ and $\theta^*$ reflect the spatial distribution of the gradients in Figure \ref{fig:circlegeom2}.

\begin{figure}[h!]
	%	\flushleft
	\centering
	\begin{subfigure}[t]{0.45\textwidth}
	    \centering
		\includegraphics[scale = 0.56]{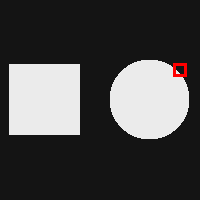}
		\caption{}
	\label{fig:circlegeom1}
    \end{subfigure}
    \begin{subfigure}[t]{0.45\textwidth}
    	\centering
		\includegraphics[scale = 0.35]{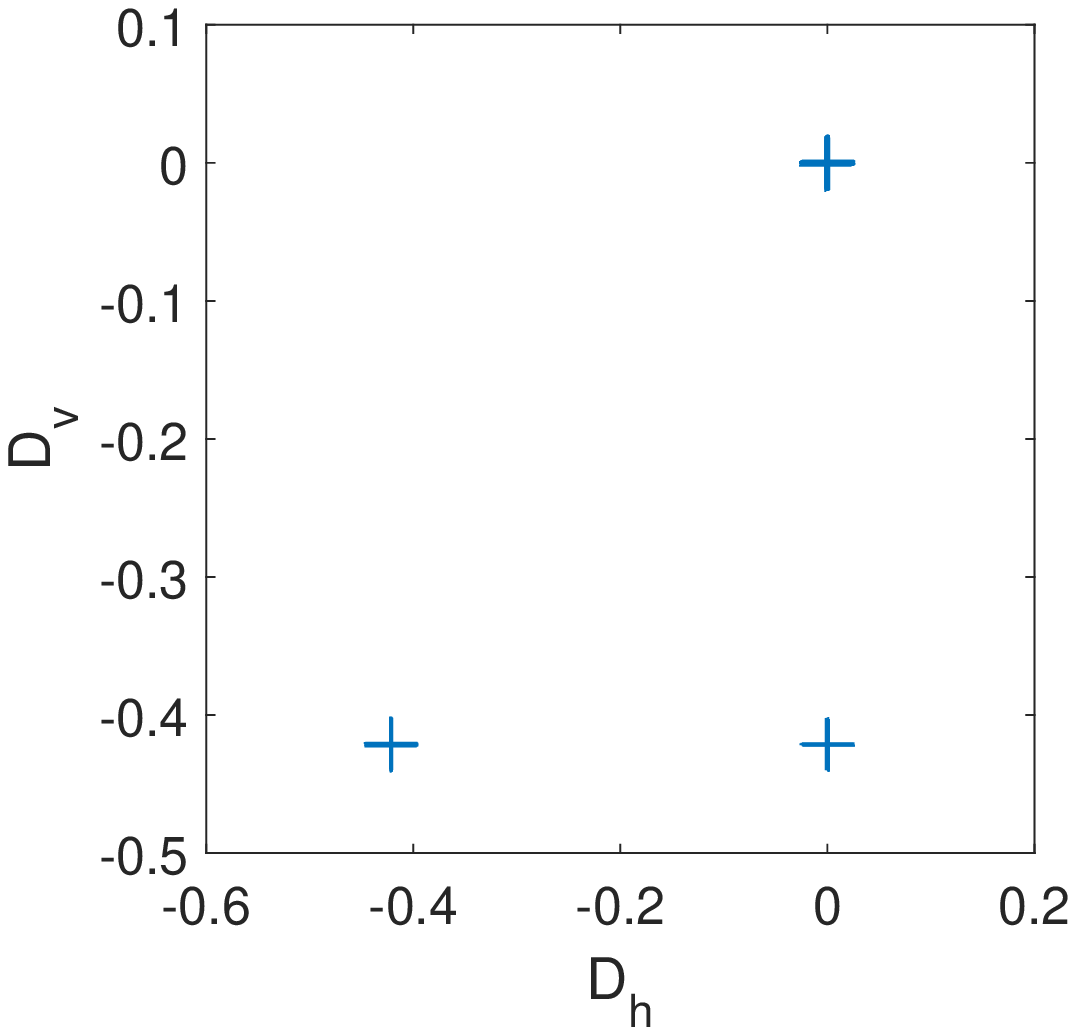}
		\caption{}
	\label{fig:circlegeom2}
    \end{subfigure}\\
    \begin{subfigure}[t]{0.45\textwidth}
    	    \centering
	\includegraphics[scale = 0.4]{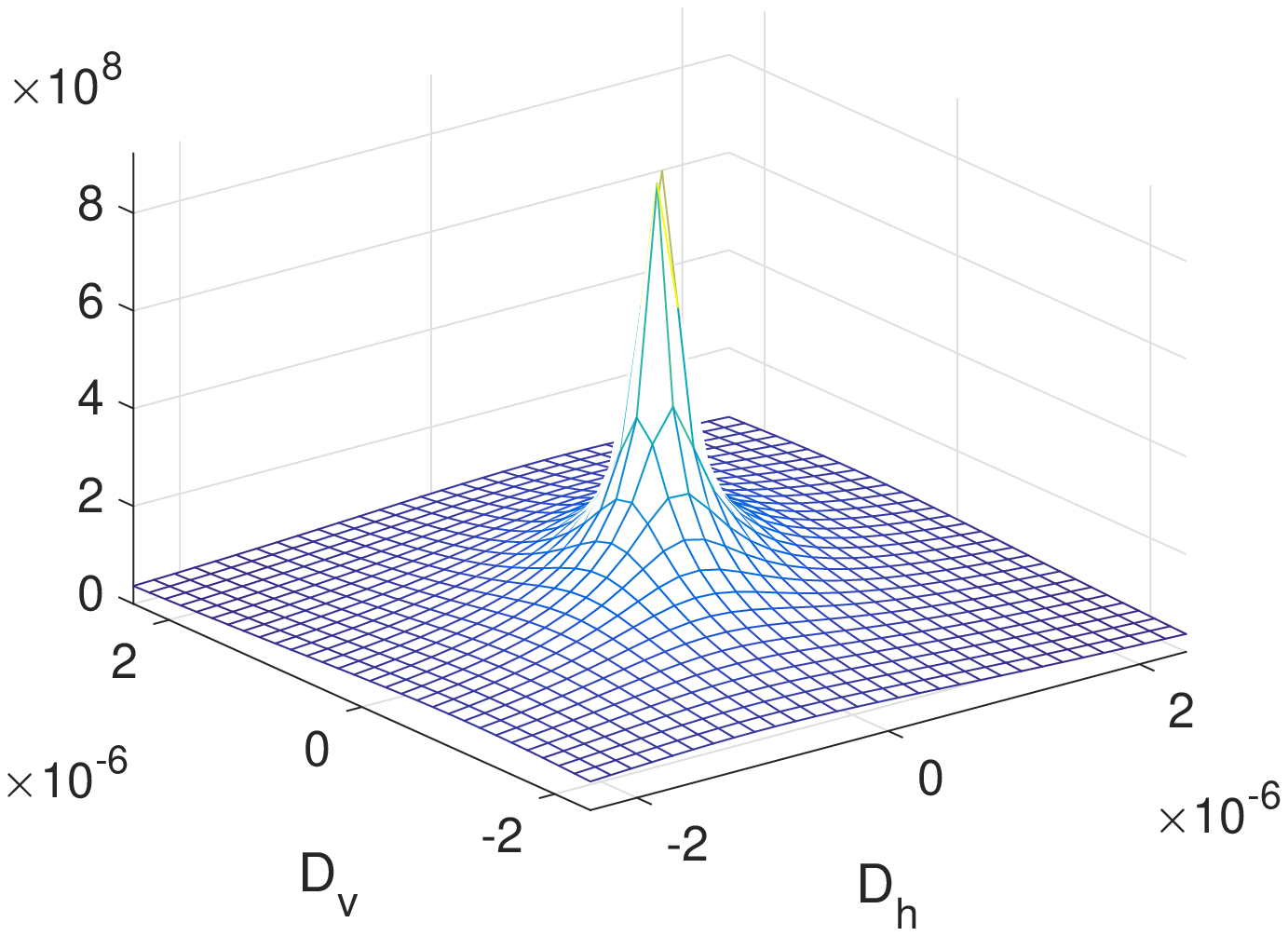}
    \caption{}
    \label{fig:circlegeom3}
    \end{subfigure}
    \begin{subfigure}[t]{0.45\textwidth}
    \centering
        \includegraphics[scale = 0.35]{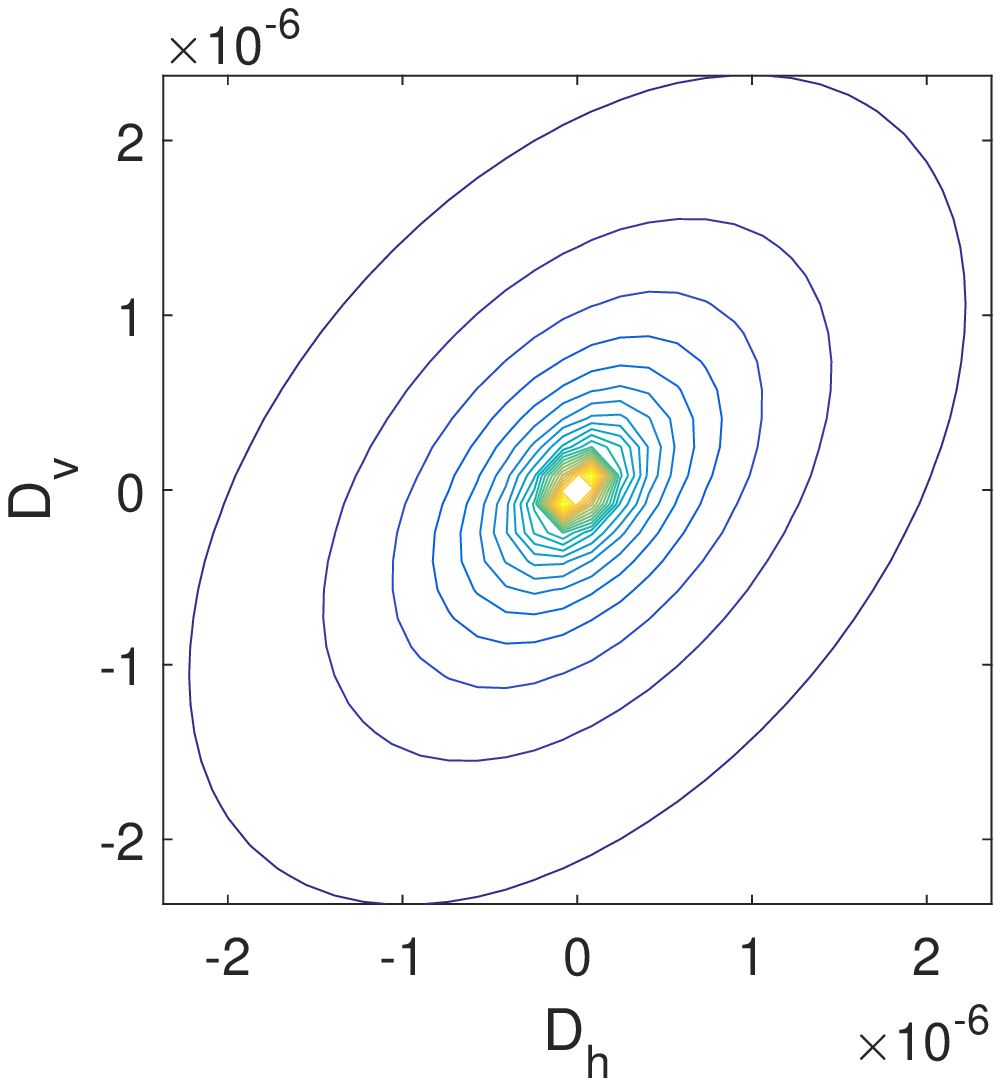}
        \caption{}
    \label{fig:circlegeom4}
    \end{subfigure}
	\caption{\ref{fig:circlegeom1}: BGGD Parameter estimation for a synthetic geometrical image. Test on image pixel lying on circular profile. \ref{fig:circlegeom2}: Scatter plot of the gradients in the read-boarded region. \ref{fig:circlegeom3}: PDF with estimated parameters $(p^*,{{e^{(1)}}}^*, \theta^*,m^*)$ = $(0.08,1.44,\ang{49.28},2*10^{-6})$. \ref{fig:circlegeom4}: Level curves of the estimated PDF. }
	\label{fig:circlegeom}
\end{figure}

\subsection{Parameter estimation on synthetic images}

Motivated by the good results above, we report in this section the numerical experiments concerned with the estimation of the four parameters $(p^*, {{e^{(1)}}}^*, \theta^*, m^*)$ at any image pixel.
%, both for synthetic and for natural images. 
For the following estimations, we fix a neighbourhood of $3\times 3$ pixels,
It is worth remarking here that the tests in section \ref{sec:est} have been computed on samples directly drawn from a BGGD. For such example, we remarked on how a large number of samples reflects on a reliable estimation of the BGGD parameters. When dealing with real images, however, our goal rather consists in estimating the parameters of the BGGD of the local gradient from the surrounding ones, since, clearly, one single sample is not sufficient to get a reliable estimate. However, the samples involved in the estimation procedure are in general not drawn from the same BGGD as their parameters may be different. Thus, their number has to be limited in order to reduce modelling errors as much as possible. In conclusion, the size of the neighbourhood is a trade off between the local properties of the image and the robustness of the estimate procedure, the former requiring small neighbourhoods, the latter requiring larger ones.
In order to avoid degenerate configurations, we corrupt the images by AWGN with $\sigma=0.03$. Moreover, the search interval for the shape parameter $p$ is set equal to $[0.1,5]$. 
We start considering the synthetic test image used already in the experiment above, i.e. Figure \ref{fig:mapgeom1}. Here we perform the estimation of the parameters at any pixel and report the local parameter maps in Figure \ref{fig:mapgeom3}, \ref{fig:mapgeom4}, \ref{fig:mapgeom5} and \ref{fig:mapgeom6}. Furthermore, we report in Figure \ref{fig:mapgeom2} the anisotropy ellipses representing the level curves of the estimated PDF,  drawn as described in Section \ref{sec:compact},  whose orientation, given by the $\theta$-map in \ref{fig:mapgeom5}, is in line with what we expected and with the test proposed in the previous sub section (see Fig. \ref{fig:edgegeom} - \ref{fig:cornergeom}). One can also observe that the higher values in the ${e^{(1)}}$-map are estimated to be along the edges, describing the strong anisotropy of the level curves there, while the higher values in the $p$-map are in the piece-wise constant regions. This can be explained by saying that in these regions the estimation procedure detects a plain Bivariate Gaussian Distribution characterised by a shape parameter $p=2$. This is of course due to the presence of AWGN.
%In order to make the ellipses visible, a constant value of $m$, different from the estimated one, has been used.

The same experiments are proposed for \texttt{geometric} test image
%and \texttt{barbara} test images, respectively 
in Figure \ref{fig:mapgeometric_image1}. Even though such image presents edges displaced along different orientations and details on different scales, the results showed in Figure \ref{fig:mapgeometric_image2}-\ref{fig:mapgeometric_image6} confirm the robustness of estimator in distinguishing between different image regions.

%\\In order to avoid redundancy, maps computed on natural images corrupted by higher noise and blur are shown in Section \ref{sec:reconstruction}, together with the reconstruction results.

%and Fig. \ref{fig:mapbarb}.

\begin{figure}[h!]
	%	\flushleft
	\centering
	\begin{subfigure}[t]{0.3\textwidth}
	    \centering
		\includegraphics[width=3.2cm]{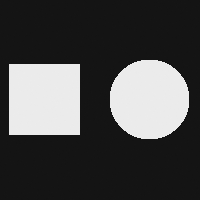}
		\caption{Test image.}
	\label{fig:mapgeom1}
    \end{subfigure}
    \begin{subfigure}[t]{0.3\textwidth}
    	\centering
		\includegraphics[width=3.2cm]{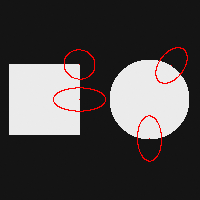}
		\caption{Anisotropy ellipses.}
	\label{fig:mapgeom2}
    \end{subfigure}
    \begin{subfigure}[t]{0.3\textwidth}
     \centering
	\includegraphics[width=4.4cm]{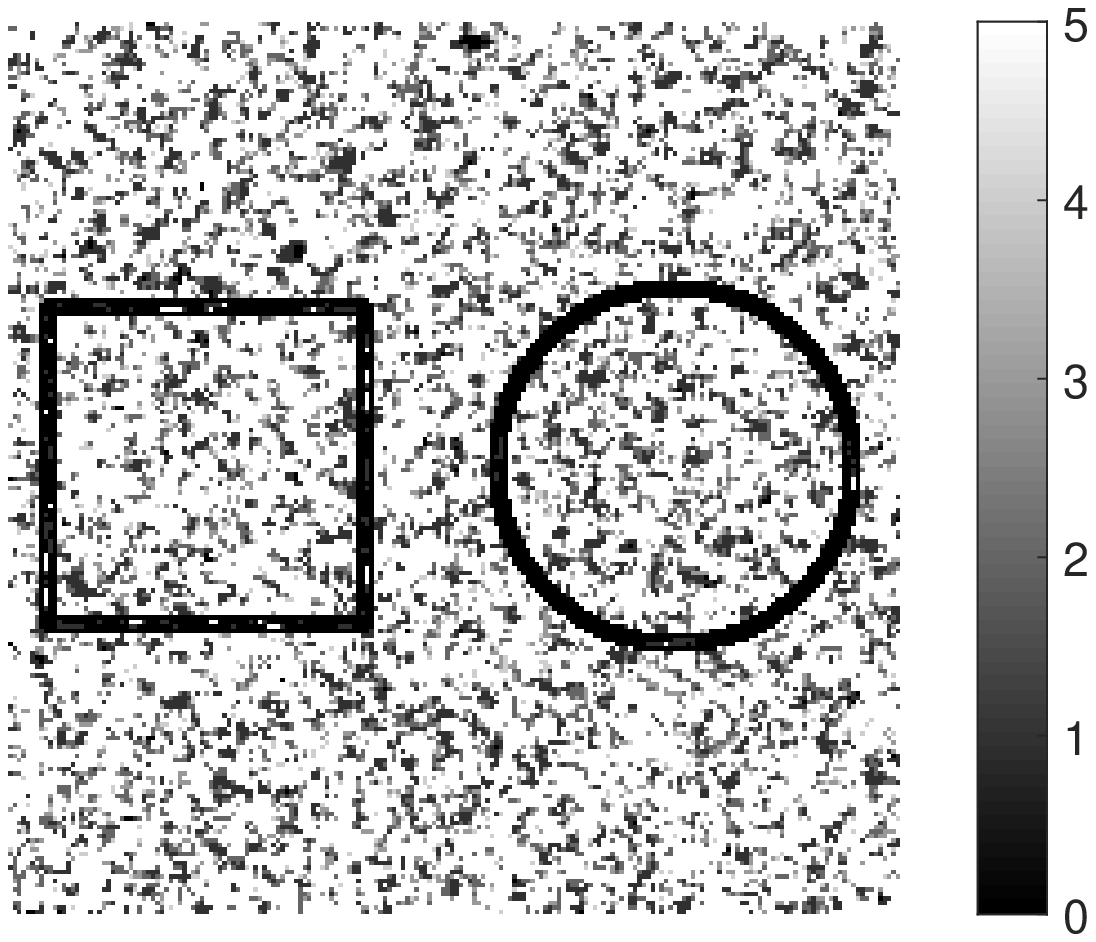}
    \caption{$p$ map.}
    \label{fig:mapgeom3}
    \end{subfigure}\\
    \begin{subfigure}[t]{0.3\textwidth}
    \centering
        \includegraphics[width=4.4cm]{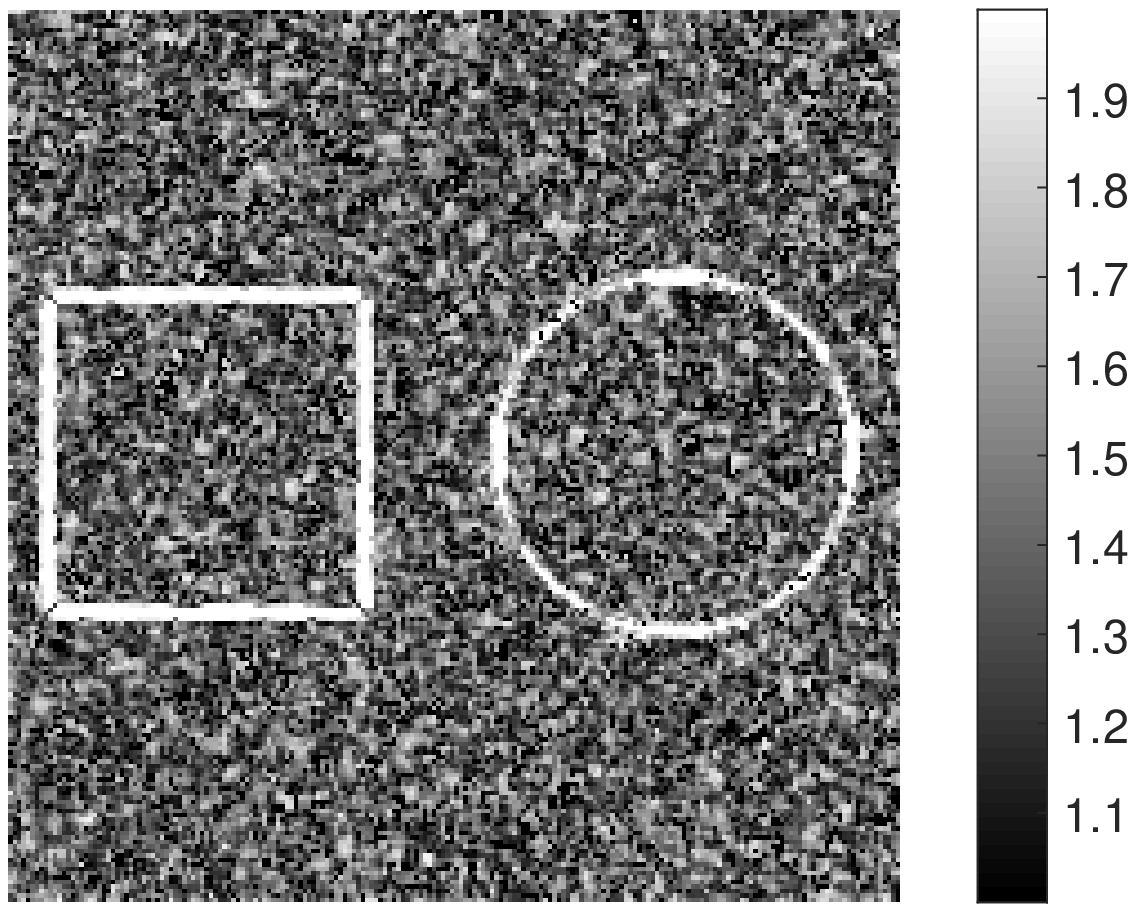}
        \caption{${e^{(1)}}$ map.}
    \label{fig:mapgeom4}
    \end{subfigure}
        \begin{subfigure}[t]{0.3\textwidth}
    	    \centering
	\includegraphics[width=4.4cm]{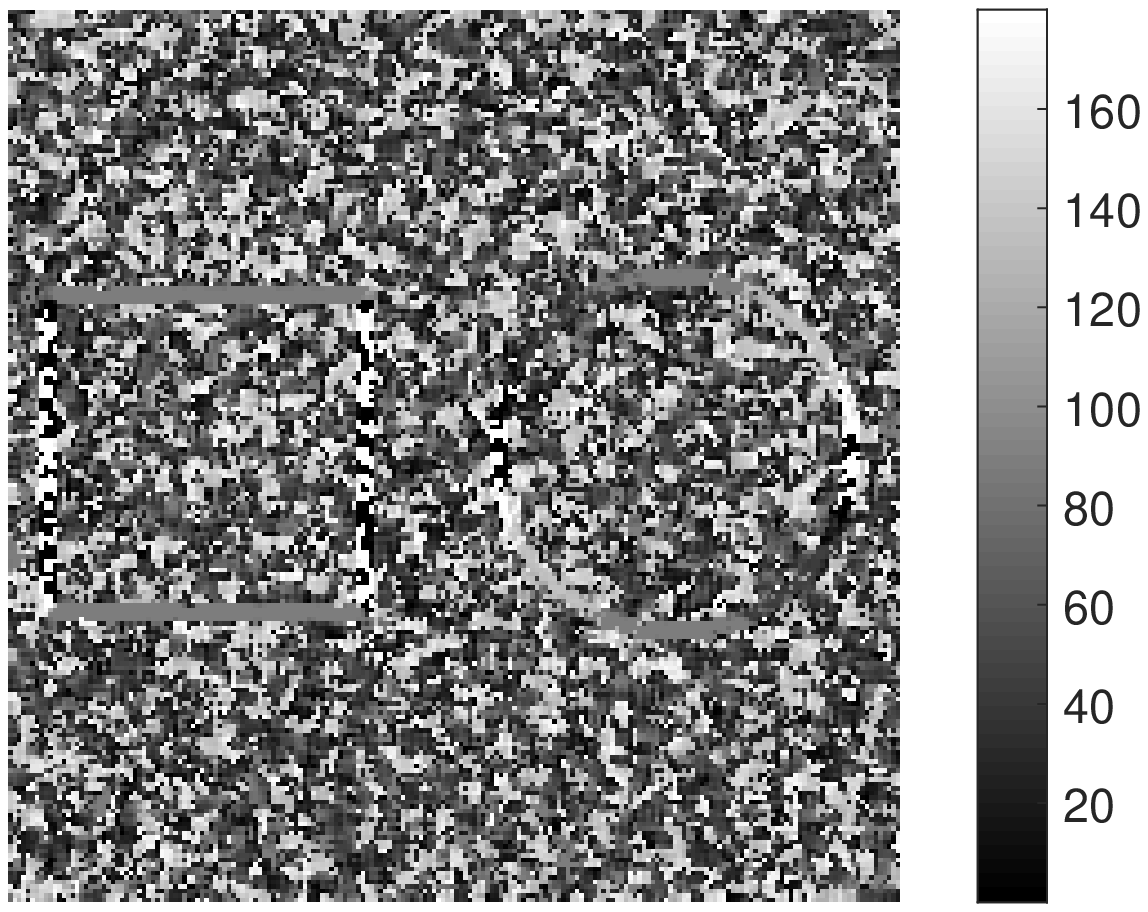}
    \caption{$\theta$ map.}
    \label{fig:mapgeom5}
    \end{subfigure}
    \begin{subfigure}[t]{0.3\textwidth}
    \centering
        \includegraphics[width=4.4cm]{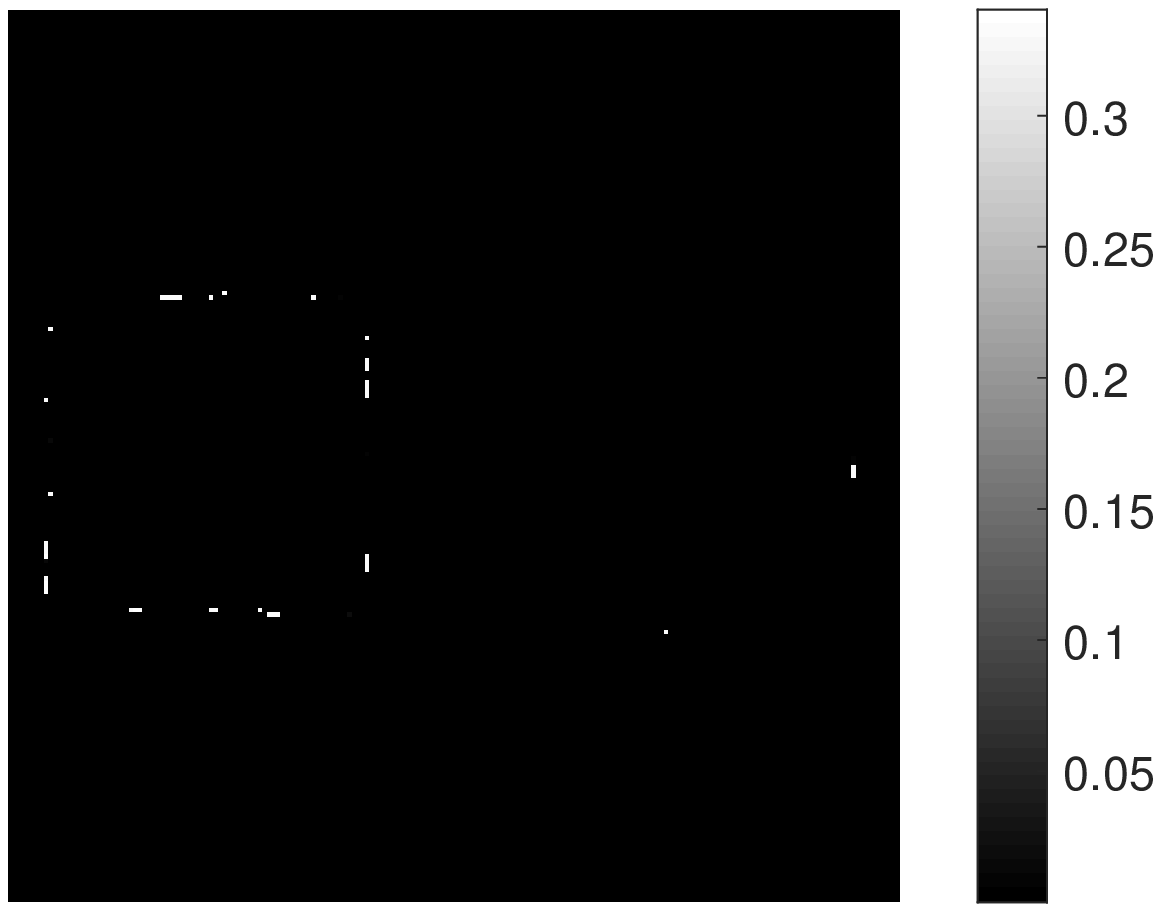}
        \caption{$m$ map.}
    \label{fig:mapgeom6}
    \end{subfigure}
	\caption{Test on \texttt{synthetic} image.}
	\label{fig:mapgeom}
\end{figure}

\begin{figure}[h!]
	\centering
	\begin{subfigure}[t]{0.3\textwidth}
	    \centering
		\includegraphics[height=3.3cm]{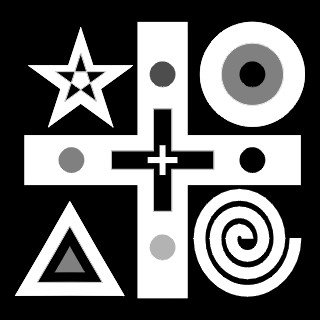}
		\caption{Test image.}
	\label{fig:mapgeometric_image1}
    \end{subfigure}
    \begin{subfigure}[t]{0.3\textwidth}
    	\centering
		\includegraphics[height=3.3cm]{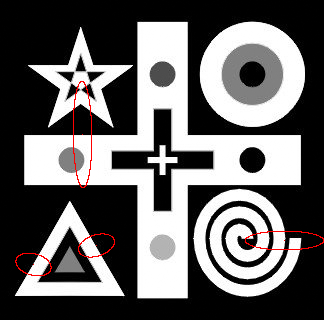}
		\caption{Anisotropy ellipses.}
	\label{fig:mapgeometric_image2}
    \end{subfigure}
    \begin{subfigure}[t]{0.3\textwidth}
    	    \centering
	\includegraphics[width=4.5cm]{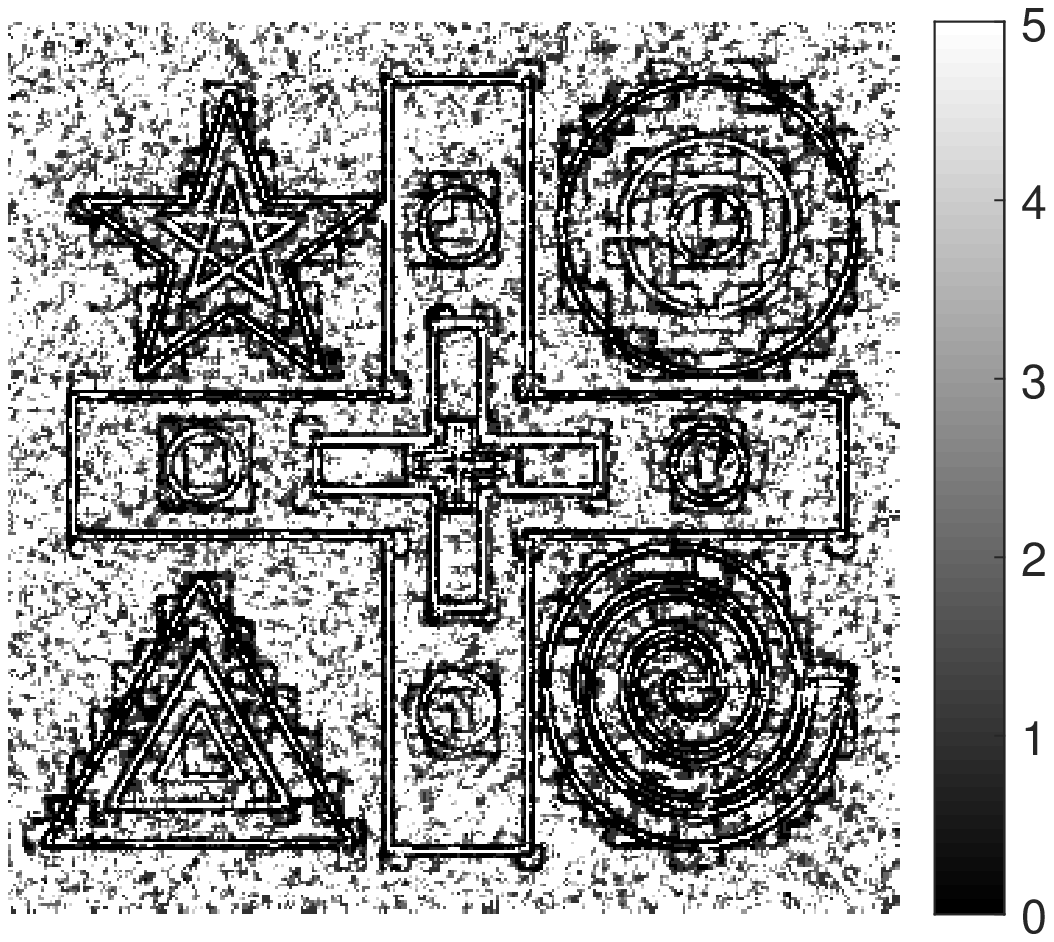}
    \caption{$p$ map.}
    \label{fig:mapgeometric_image3}
    \end{subfigure}\\
    \begin{subfigure}[t]{0.31\textwidth}
    \centering
        \includegraphics[width=4.5cm]{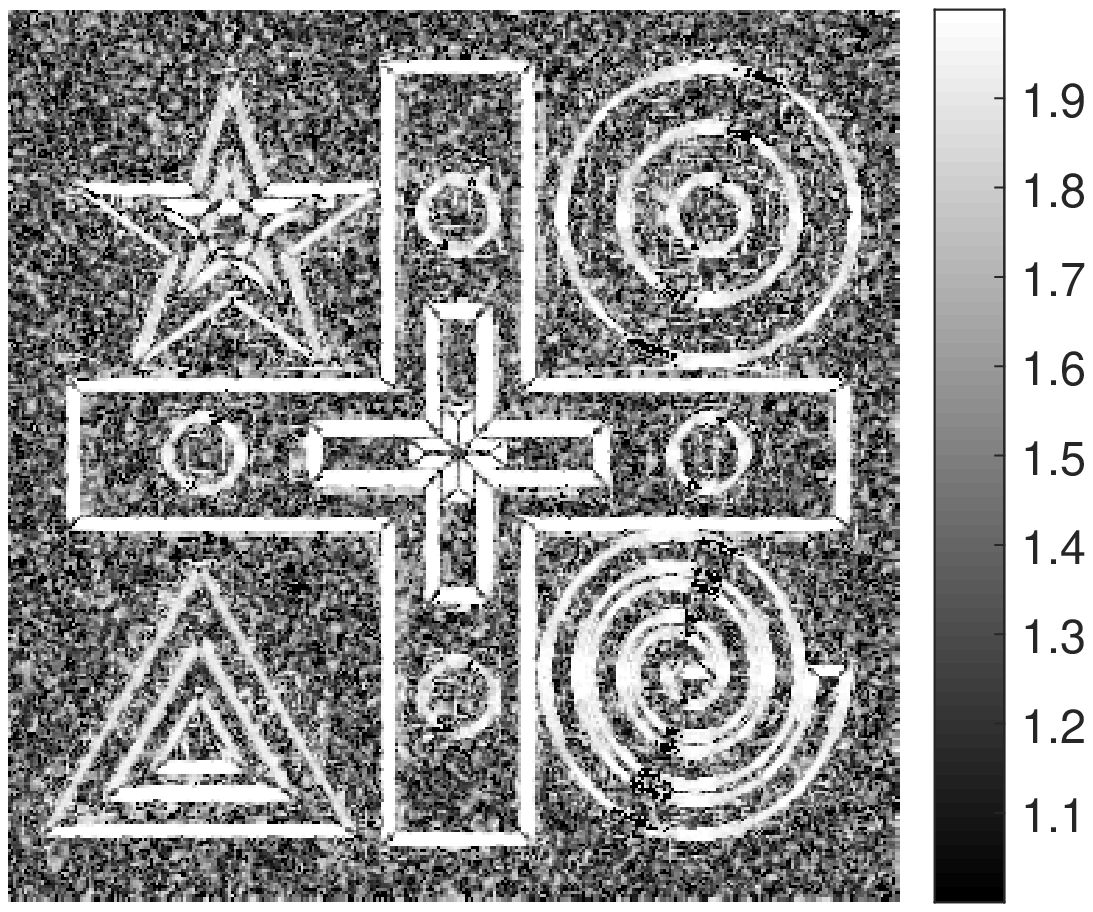}
        \caption{${e^{(1)}}$ map.}
    \label{fig:mapgeometric_image4}
    \end{subfigure}
        \begin{subfigure}[t]{0.31\textwidth}
    	    \centering
	\includegraphics[width=4.5cm]{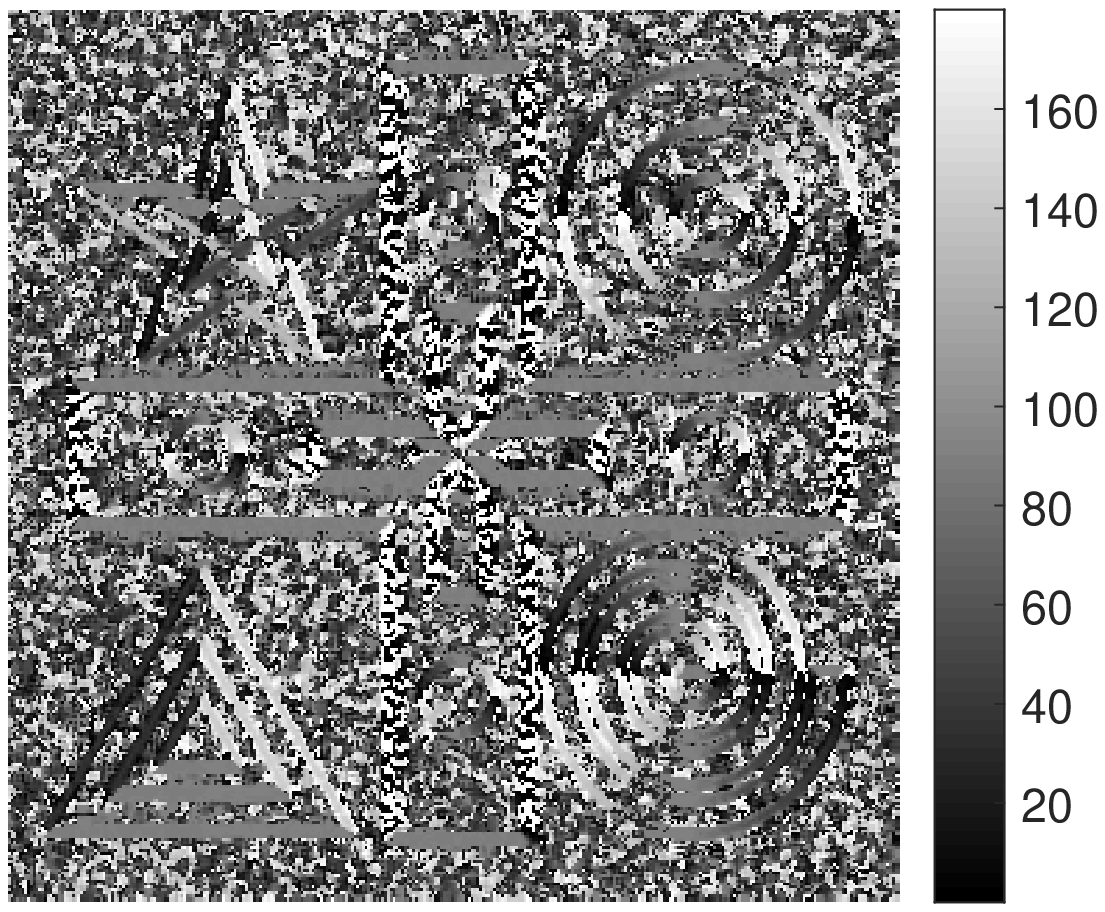}
    \caption{$\theta$ map.}
    \label{fig:mapgeometric_image5}
    \end{subfigure}
    \begin{subfigure}[t]{0.31\textwidth}
    \centering
        \includegraphics[width=4.5cm]{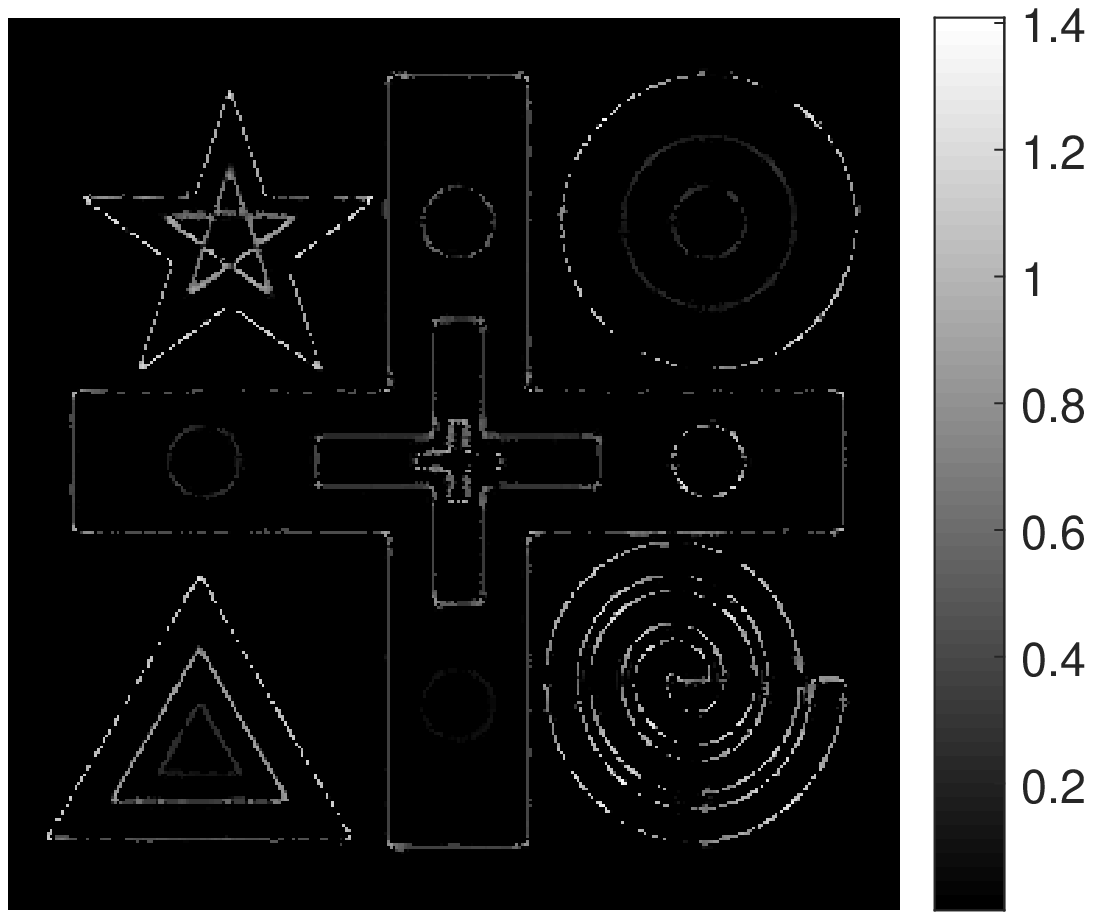}
        \caption{$m$ map.}
    \label{fig:mapgeometric_image6}
    \end{subfigure}
	\caption{Test on \texttt{geometric} image.}
	\label{fig:mapgeometric_image}
\end{figure}

\begin{remark}
In order to generate the samples used in the parameter map estimation above, one has to choose a suitable discretisation of the image gradient. Here, we considered central differences schemes. 
%In contrast to the notation so far, we are not considering a vectorized image. Therefore, a pixel is denoted by two indices, the horizontal one $i$, and the vertical one $j$. Hence, the gradients are discretized as follows:
%	\begin{eqnarray*}
%		(\,D_h u \,)_{\,i,j} &=& u_{i,j+1} - u_{i,j-1}\\
%		(\,D_v u \,)_{\,i,j} &=& u_{i+1,j} - u_{i-1,j},
%	\end{eqnarray*}
%	where $D_h$ and $D_v$ have been introduced in ((?)). 
Compared to standard forward/backward difference schemes, this choice avoids the undesired correlation between the horizontal and the vertical components. As preliminary numerical tests showed, such correlation may result indeed into a deviation between the estimated $\theta^*$ from the one estimated above.

%\red{LUCA: ho tolto qui il commento generale sulla correlazione, lo metteremo se ci chiederanno di precisare}
%In general, consider $X$,$Y$,$Z$$\,\sim\,$$\mathcal{N}(0,1)$, and other two random variables:
%	\begin{eqnarray*}
%		S &=& Y - X \\
%		T &=& Z - Y.
%	\end{eqnarray*}
%	We are interested in computing:
%	$$cov(S,T) = \begin{pmatrix} \mathbb{E}\{(S - \mathbb{E}(S))^2\} & \mathbb{E}\{(S - \mathbb{E}(S))(T - \mathbb{E}(T))\,\} \\
%	 \,\mathbb{E}\{(S - \mathbb{E}(S))(T - \mathbb{E}(T))\} & \mathbb{E}\{(T - \mathbb{E}(T))^2\}\end{pmatrix}.$$

%It is easy to verify that:
%	$$cov(S,T) = \begin{pmatrix} 2 & -1 \\ -1 & 2
%	\end{pmatrix}.$$
%The correlation between $S$ and $T$ is related to the non-zero off-diagonal entries.
\end{remark}

\section{Applications to image denoising and deblurring}
\label{sec:reconstruction}

In this section, we evaluate the performance of the DTV$_{p}^{\mathrm{sv}}$-L$_2$ image reconstruction model \eqref{eq:PMa}-\eqref{eq:PMb} applied to the restoration of grey-scale images corrupted by (known) blur and AWGN.

Denoting by $u$ the ground-truth image, the quality of the given corrupted images $g$ and of the restored images
$u^*$ is measured by means of standard image quality measures, i.e. the Blurred Signal-to-Noise Ratio
\[
\;\mathrm{BSNR}(u^*,u) := 10\log_{10} \frac{\|Ku - \overline{Ku}\|_2^2}{\|u^*-Ku\|_2^2},
\]
where by $\overline{Ku}$ we have denoted the average intensity of the blurred image $Ku$, and the Improved Signal-to-Noise Ratio
\[
\mathrm{ISNR}(g,u,u^*) := 10\log_{10}\frac{\|g-u\|_2^2}{\|u^*-u\|_2^2},
\] 
defined also in terms of the given noisy $g$. The larger the BSNR and the ISNR values, the higher the quality of restoration. For a more visual-inspired standard quality measure, we will also  quantify our results in terms of the standard Structural Similarity Index (SSIM), \cite{ssim}.

%		\[
%	\;\mathrm{SSIM}(u,u^*) = \ell(u,u^*) c(u,u^*) s(u,u^*),
%	\]
%where
%\begin{equation}
%    \ell(u,u^*) = \frac{2\mu_u\mu_u^{*} + C_1}{\mu_{u} +\mu_{u}^{*} +C_1},\quad
%    c(u,u^*) = \frac{2\sigma_u\sigma_u^{*} + C_2}{\sigma_{u} +\sigma_{u}^{*} +C_2},\quad 
%    s(u,u^*) = \frac{\sigma_{u,u^{*}}+ C_3}{\sigma_{u} +\sigma_{u}^{*} +C_3},
%\end{equation}	
%with $\mu_u,\mu_{u^{*}},\sigma_{u},\sigma_{u^{*}},\sigma_{u,u^{*}}$ being respectively the local means, standard deviations and cross covariance for the images $u$,$u^*$.	

The DTV$_{p}^{\mathrm{sv}}$-L$_2$ model will be compared with the following ones:
\begin{itemize}
\item The ROF or TV-L$_2$ model, \cite{ROF}; 
\item The TV$_p$-L$_2$ model, with constant $p \in (0,2]$, see \cite{tvpl2};
\item The TV$_{\alpha,p}^{\mathrm{sv}}$-L$_2$ model, with space-variant $p_i \in (0,2]$, $i \in \{1,\ldots,n\}$, \cite{vip,CMBBE}.
\end{itemize}

We stress that in order to compute the following results, an accurate and reliable estimation of the parameters appearing in the DTV$_{p}^{\mathrm{sv}}$ needs to be performed. We do that by means of the ML procedure described in Section \ref{sec:parameter_estimation} whose accuracy has been extensively confirmed by the tests in Section \ref{sec:parameter_estimation_results}.

For the numerical solution of the DTV$_{p}^{\mathrm{sv}}$-L$_2$ model we use the ADMM-based algorithm \ref{alg:1} where for all tests we manually set the penalty parameters $\beta_t$ and $\beta_r$.
	%Usually good choices are $(\beta_t, \beta_r)=(10,10),(50,50)$. 
Iterations are stopped whenever the following stopping criterion is verified:	%
	\begin{equation}
	\frac{\big\| u^{(k)} - u^{(k-1)} \big\|_{2}}{\big\| u^{(k-1)}\big\|_{2}} \,\;{<}\;\,10^{-4}.
	\end{equation}
Finally, the parameter $\mu$ is set based on the discrepancy principle, stated in \eqref{discr_set}.

\paragraph{Barbara image} 

We start testing the reconstruction algorithm on a zoom of a high resolution ($1024 \times 1024$) \texttt{barbara} test image with size $471 \times 361$, characterised by the joint presence of texture and cartoon regions. The image here has been corrupted by Gaussian blur of \texttt{band}$=9$ and \texttt{sigma}$=2$ and AWGN resulting in BSNR values equal to 20 dB,15 dB and 10dB. The original image and the observed image, as well as the four parameter maps, computed considering a neighbourhood of size $7\times 7$, are shown in Figure \ref{fig:barb}. In order to avoid inaccurate estimations of the parameters due to the presence of possibly large noise, the parameter $p^*$ in the TV$_p$-L$_2$ model as well as the local maps of the parameters in the DTV$_p^{\mathrm{sv}}$-L$_2$ have been computed after few iterations (usually 5) of the TV-L$_2$ model. Furthermore, as discussed in Section \ref{sec:compact}, the $p$ parameter has been computed by restricting the admissible range to $[0.1,2]$.
In Tables \ref{tab:1} and \ref{tab:2} the ISNR and SSIM values achieved by the TV-L$_2$, TV$_p$-L$_2$ (with estimated global $p=0.92$), TV$_{\alpha,p}^{\mathrm{sv}}$-L$_2$ (with space variant parameters estimated as in \cite{CMBBE}) and DTV$_{p}^{\mathrm{sv}}$-L$_2$ models for different values of initial BSNR are reported. We note that the proposed model outperforms the competing ones. 
As shown in Figure \ref{fig:barbrec}, the flexibility of the DTV$_{p}^{\mathrm{sv}}$ regulariser strongly improves the reconstruction quality mainly in terms of better texture preservation.

\begin{figure}[tbh]
	\centering
\begin{subfigure}[t]{0.31\textwidth}
\centering
\includegraphics[height=5cm]{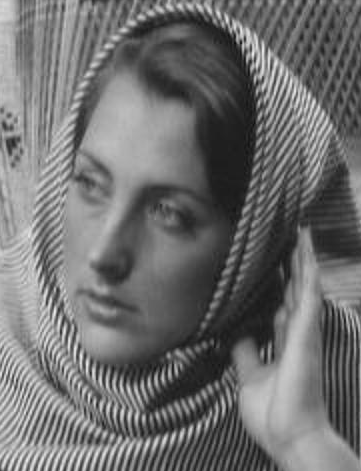}
\caption{Zoom of original $u$.}
\end{subfigure}
\begin{subfigure}[t]{0.31\textwidth}
\centering
\includegraphics[height=5cm]{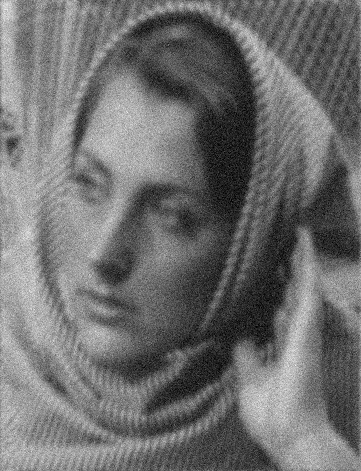}
\caption{Zoom of $g$.}
\end{subfigure}
\begin{subfigure}[t]{0.31\textwidth}
\centering
\includegraphics[height=5cm]{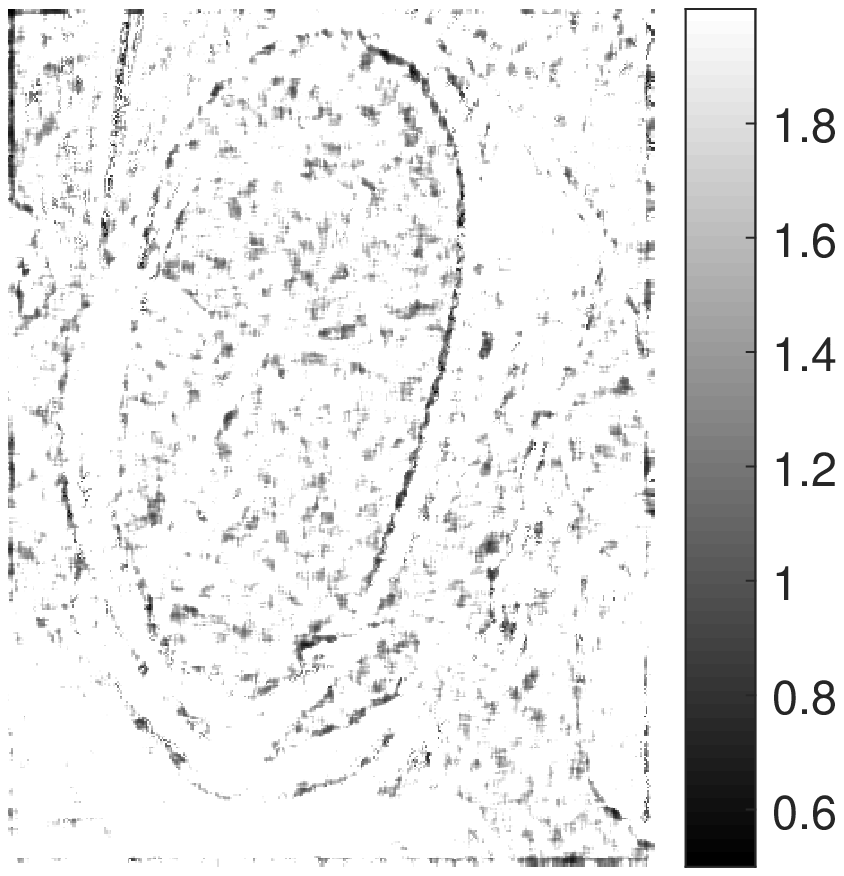}
\caption{$p$ map.}
\end{subfigure}\\
\begin{subfigure}[t]{0.31\textwidth}
\centering
\includegraphics[height=5cm]{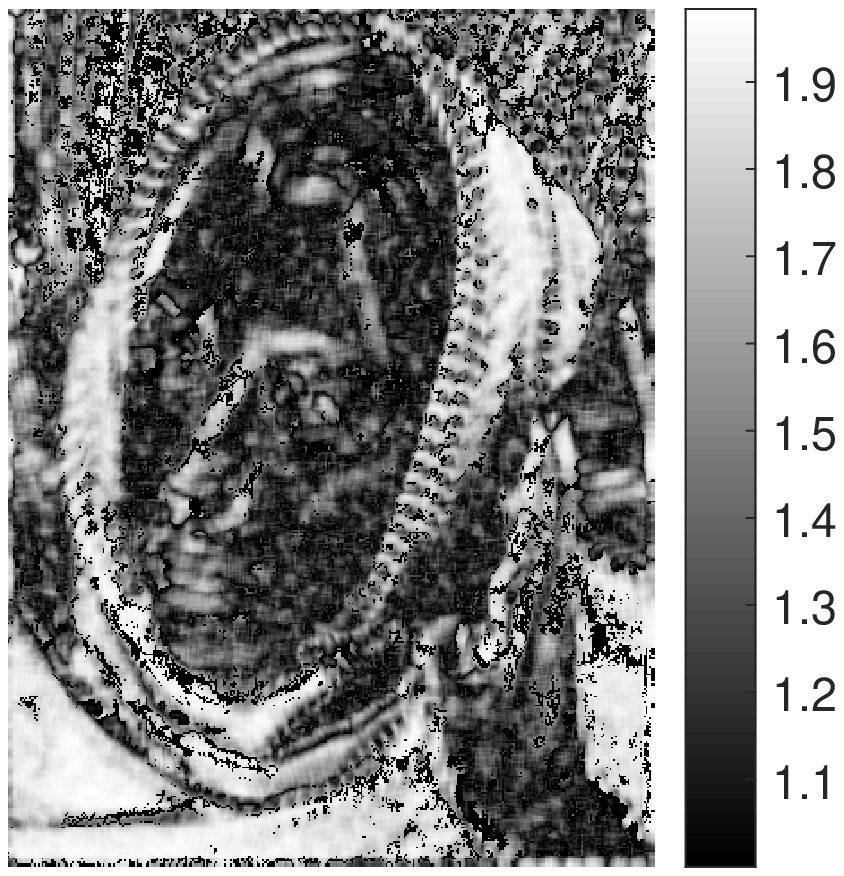}
\caption{${e^{(1)}}$ map.}
\end{subfigure}
\begin{subfigure}[t]{0.31\textwidth}
\centering
\includegraphics[height=5cm]{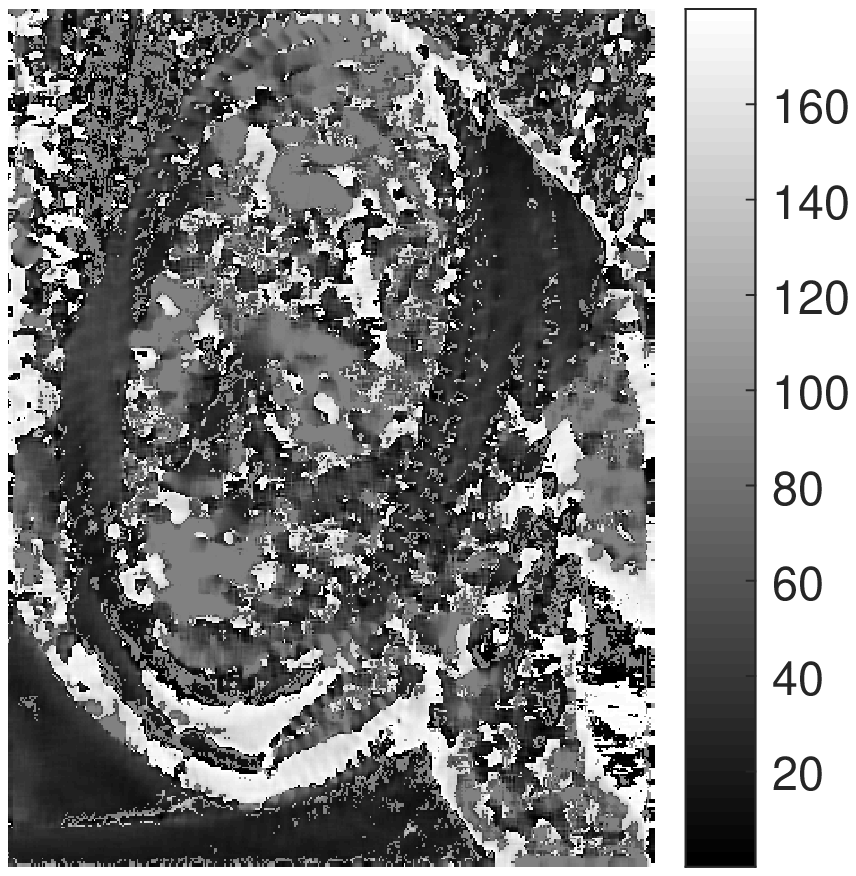}
\caption{$\theta$ map.}
\end{subfigure}
\begin{subfigure}[t]{0.31\textwidth}
\centering
\includegraphics[height=5cm]{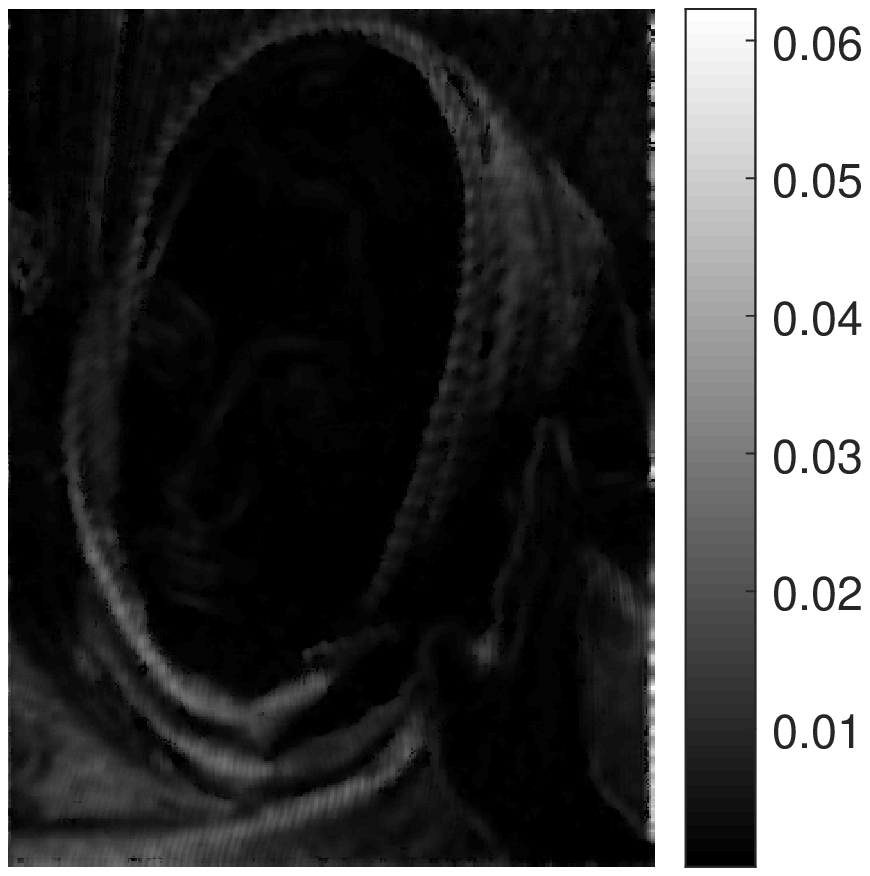}
\caption{$m$ map.}
\end{subfigure}
\caption{Parameter maps for a zoom of the \texttt{barbara} test image. Image is corrupted by AWGN and blur for a BSNR = 10 dB.}
\label{fig:barb}
\end{figure}

\begin{table}
	\caption{ISNR values for the  \texttt{barbara} test image for decreasing BSNR $=20,15,10$ dB.}
	\label{tab:1}     
	\centering
	\begin{tabular}{rrrrr}
		\hline\noalign{\smallskip}
		BSNR & TV-L$_2$& TV$_{p}$-L$_2$ &TV$_{\alpha,p}^{\mathrm{sv}}$-L$_2$ &DTV$_p^{\mathrm{sv}}$-L$_2$ \\
		\noalign{\smallskip}\hline\noalign{\smallskip}
		20               & 2.46 & 3.14 &3.23 & \textbf{3.61} \\
		15               & 1.74 & 1.99 &2.14 & \textbf{2.79} \\
		10               & 1.59  & 2.02 &2.13 & \textbf{2.90} \\
		\noalign{\smallskip}\hline
	\end{tabular}
	
\end{table}

\begin{table}
	\caption{SSIM values for the \texttt{barbara} test image for decreasing BSNR $=20,15,10$ dB.}
	\label{tab:2}   
	\centering
	\begin{tabular}{rrrrr}
		\hline\noalign{\smallskip}
		BSNR & TV-L$_2$& TV$_{p}$-L$_2$ &TV$_{\alpha,p}^{\mathrm{SV}}$-L$_2$ &DTV$_p^{\mathrm{SV}}$-L$_2$ \\
		\noalign{\smallskip}\hline\noalign{\smallskip}
		20               & 0.80 & 0.83 &0.83&\textbf{0.85} \\
		15               & 0.74 & 0.75 & 0.77& \textbf{0.80} \\
		10               & 0.65  & 0.68 & 0.69& \textbf{0.74} \\
		\noalign{\smallskip}\hline
	\end{tabular}
\end{table}

\begin{figure}[tbh]
	\centering
\begin{subfigure}[t]{0.21\textwidth}
\includegraphics[width=1.2in]{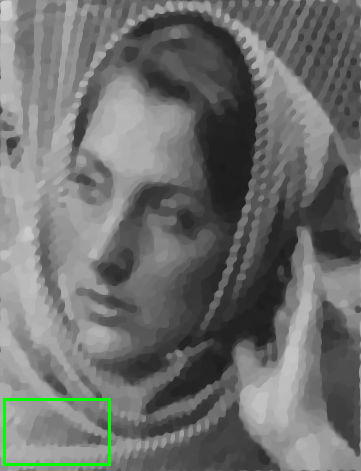}
\caption{TV-L$_2$.}
\label{fig:barbara1}
\end{subfigure}
\begin{subfigure}[t]{0.21\textwidth}
\includegraphics[width=1.2in]{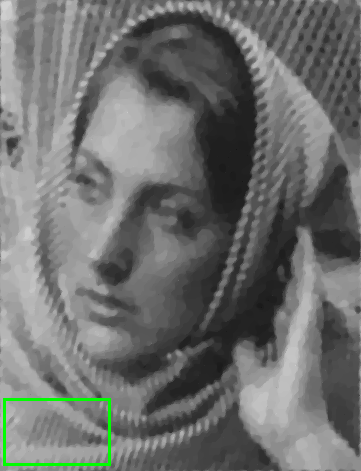}
\caption{TV$_p$-L$_2$.}
\label{fig:barbara2}
\end{subfigure}
\begin{subfigure}[t]{0.21\textwidth}
\includegraphics[width=1.2in]{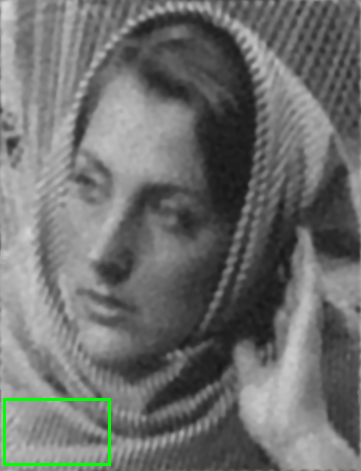}
\caption{TV$_{\alpha,p}^{\mathrm{sv}}$-L$_2$.}
\label{fig:barbara3}
\end{subfigure}
\begin{subfigure}[t]{0.21\textwidth}
\includegraphics[width=1.2in]{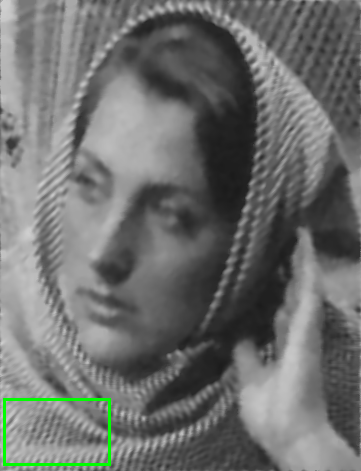}
\caption{DTV$_{p}^{\mathrm{sv}}$-L$_2$.}
\label{fig:barbara4}
\end{subfigure}  \\
\begin{subfigure}[t]{0.21\textwidth}
\includegraphics[width=1.2in]{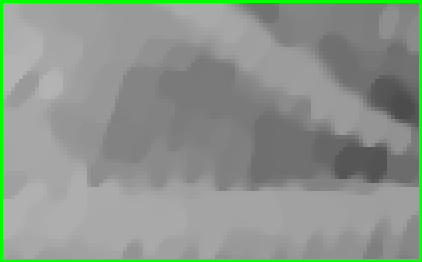}
\caption{Zoom of \ref{fig:barbara1}.}
\end{subfigure}
\begin{subfigure}[t]{0.21\textwidth}
\includegraphics[width=1.2in]{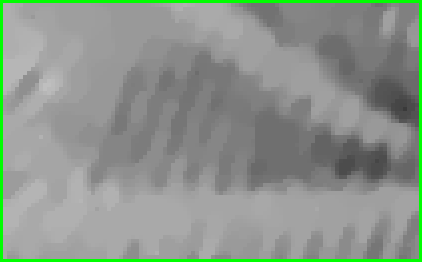}
\caption{Zoom of \ref{fig:barbara2}.}
\end{subfigure}
\begin{subfigure}[t]{0.21\textwidth}
\includegraphics[width=1.2in]{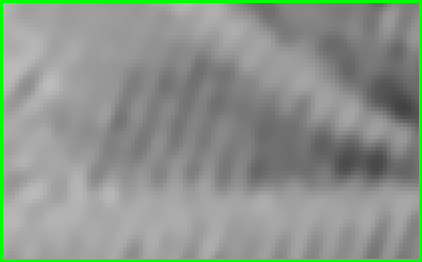} 
\caption{Zoom of \ref{fig:barbara3}.}
\end{subfigure}
\begin{subfigure}[t]{0.21\textwidth}
\includegraphics[width=1.2in]{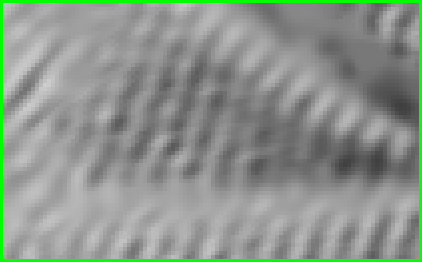}
\caption{Zoom of \ref{fig:barbara4}.}
\end{subfigure} 
\caption{Detail of reconstruction of \texttt{barbara} image \ref{fig:barb}. Texture components are much better preserved by encoding directional information.}
\label{fig:barbrec}
\end{figure}

\paragraph{Natural image} As a second test, we compared the performance of DTV$_p^{\textrm{SV}}$-L$_2$ restoration model on a $500 \times 500$ portion of a high resolution ($1024 \times 1024$) natural test image characterised by fine-scale textures of different types. As in the previous example, we similarly corrupt the image by AWGN and Gaussian blur of \texttt{band}$=9$ and \texttt{sigma}$=2$ with BSNR = 20 dB, 15 dB and 10 dB. The original and the observed images, as well as the four parameter maps computed considering neighbourhoods of size $3\times 3$ are shown in Figure \ref{fig:text}. Similarly as for the numerical test above few preliminary iterations of TV-L$_2$ are performed before computing the parameter maps. The research interval for the $p$ parameter has been set equal to $[0.1,2]$. It is worth remarking that the very small neighbourhood size used for the parameter estimation is the one yielding the best restoration results for this test. We believe that this is motivated by the very fine scale of details in the test image.
In Tables \ref{tab:3} and \ref{tab:4}, the ISNR and SSIM values achieved by the TV-L$_2$, the TV$_p$-L$_2$ (with estimated global $p=0.7$), the TV$_{\alpha,p}^{\mathrm{sv}}$-L$_2$  (with space variant parameters estimated as in \cite{CMBBE}) and the DTV$_{p}^{\mathrm{sv}}$-L$_2$ models for  different values of BSNR are reported. Also in this case,  the proposed model outperforms the competing ones. Note that the improvement is actually more significant in correspondence of higher noise levels.
In Figure \ref{fig:textrec}, a visual comparison between the reconstructions obtained by the different models for BSNR$=10  $dB is proposed.

\begin{figure}[tbh]
	\centering
\begin{subfigure}[t]{0.31\textwidth}
\centering
\includegraphics[height=3.6cm]{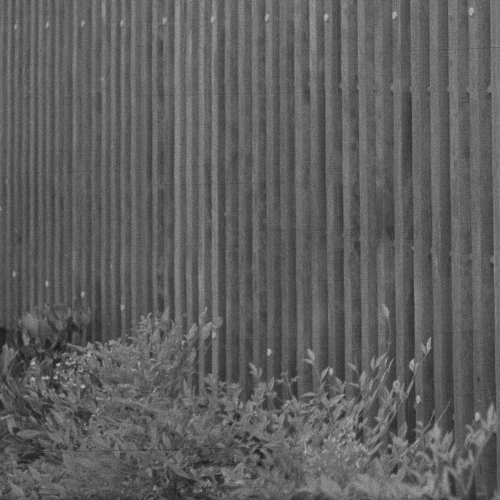}
\caption{Zoom of original $u$.}
\end{subfigure}
\begin{subfigure}[t]{0.31\textwidth}
\centering
\includegraphics[height=3.6cm]{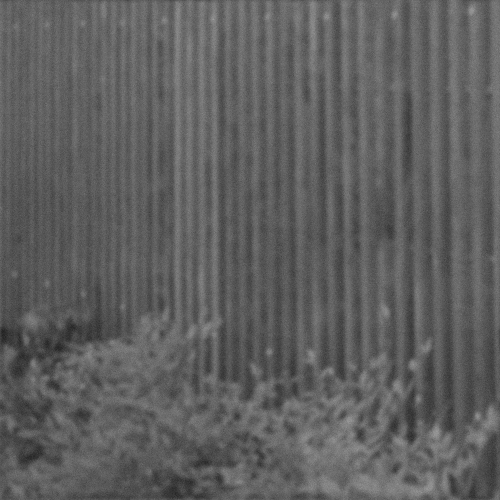}
\caption{Zoom of $g$.}
\end{subfigure}
\begin{subfigure}[t]{0.31\textwidth}
\centering
\includegraphics[height=3.8cm]{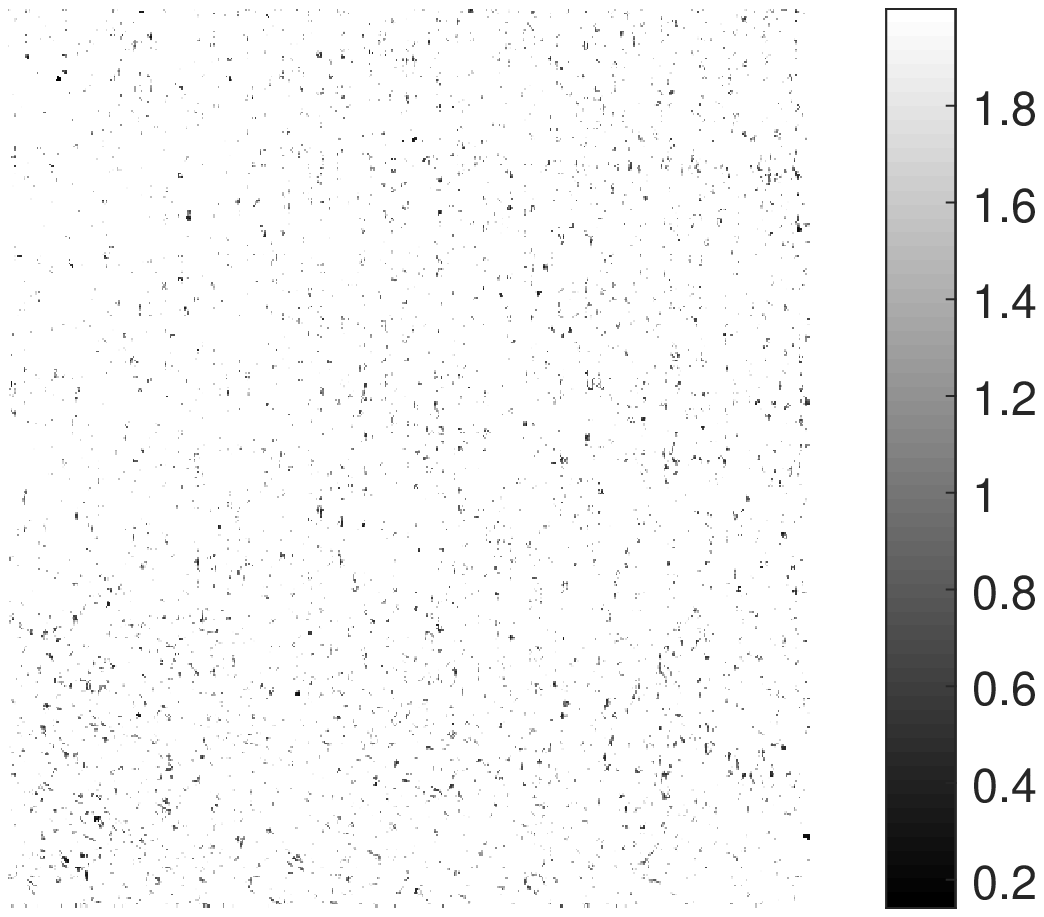} 
\caption{$p$ map.}
\end{subfigure}\\
\begin{subfigure}[t]{0.31\textwidth}
\centering
\includegraphics[height=3.8cm]{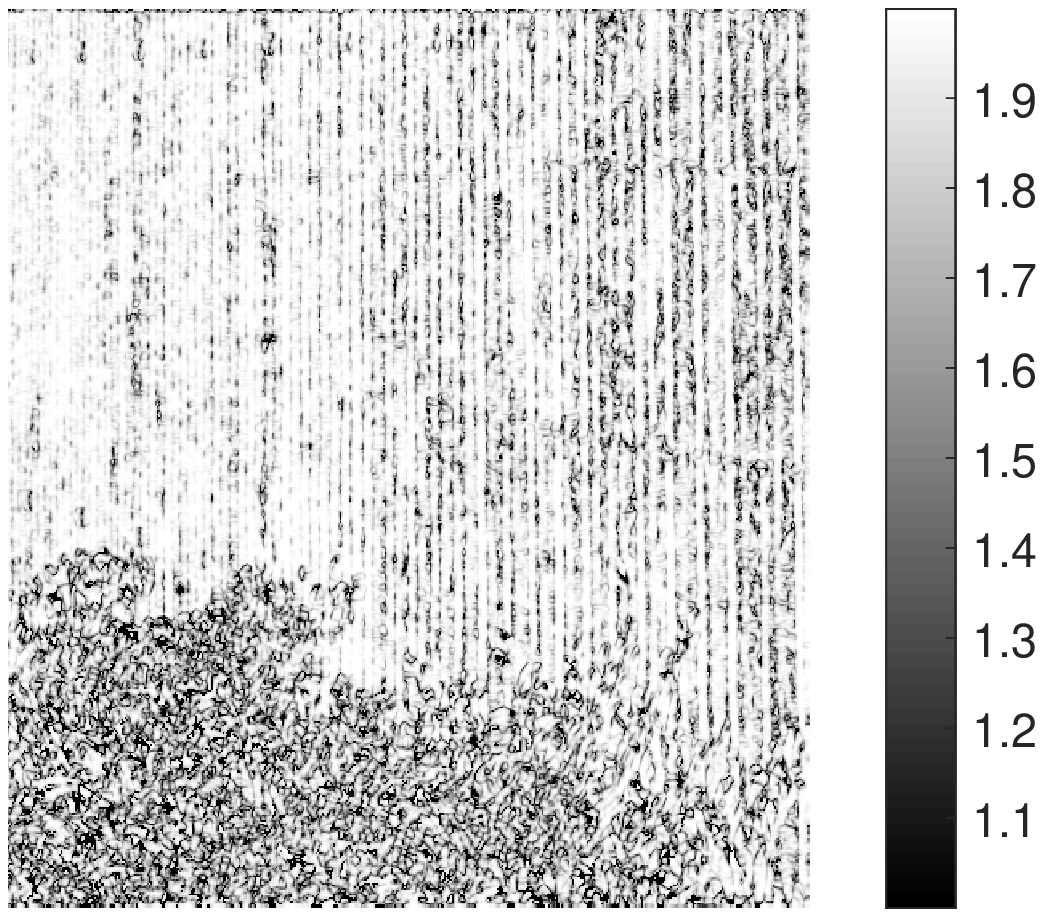}
\caption{${e^{(1)}}$ map.}
\end{subfigure}
\begin{subfigure}[t]{0.31\textwidth}
\centering
\includegraphics[height=3.8cm]{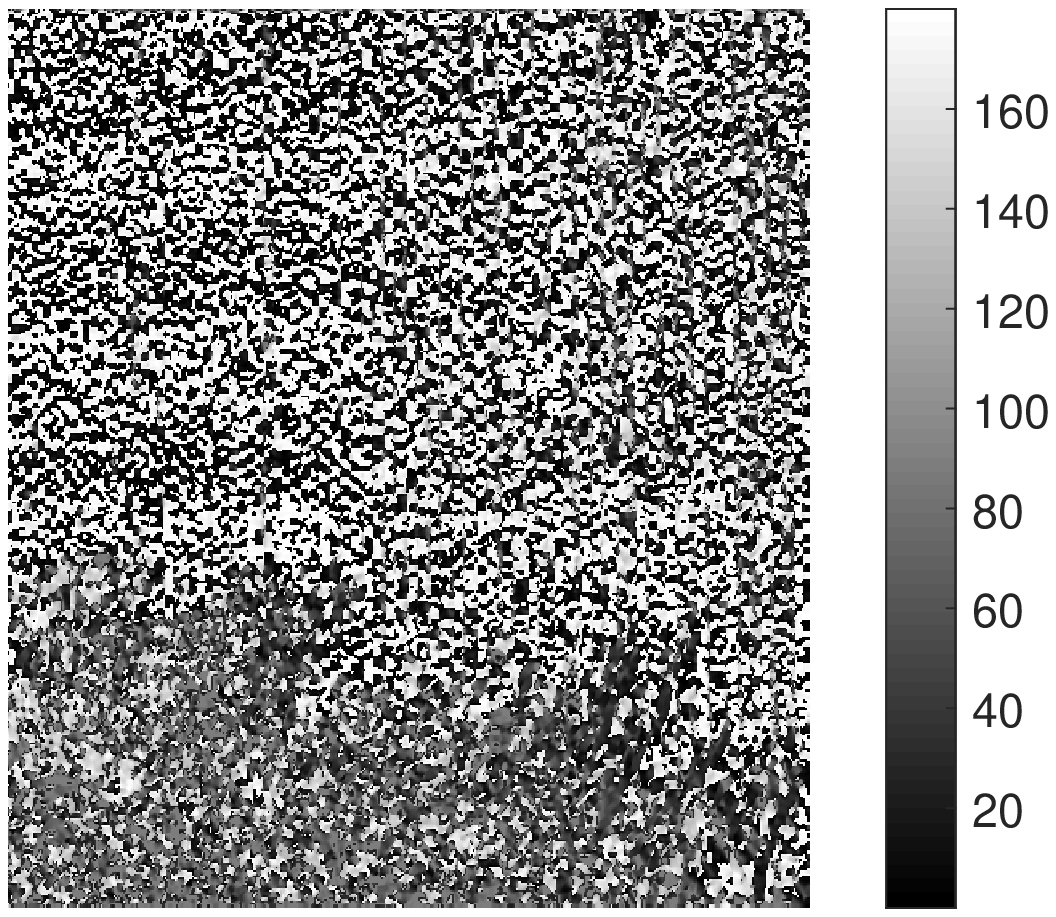}
\caption{$\theta$ map.}
\end{subfigure}
\begin{subfigure}[t]{0.31\textwidth}
\centering
\includegraphics[height=3.8cm]{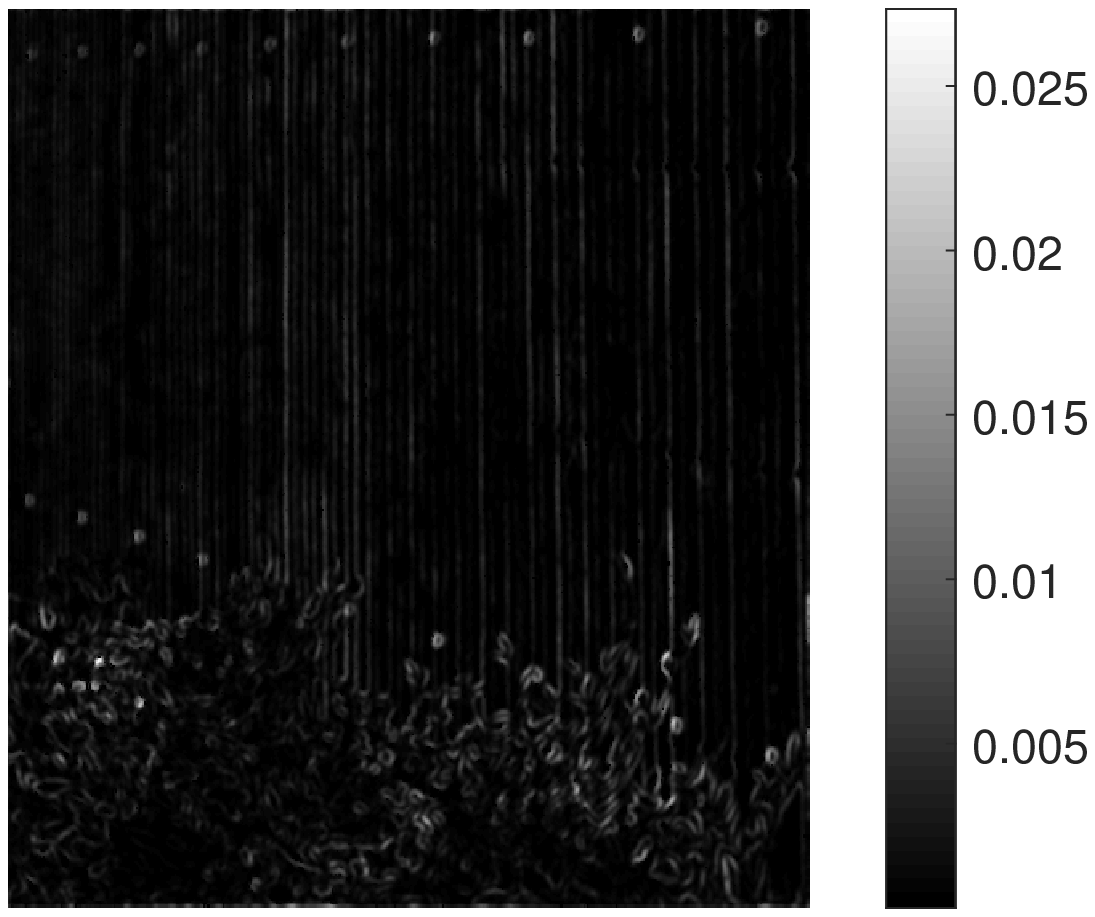}
\caption{$m$ map.}
\end{subfigure}
\caption{Parameter maps for a zoom of a natural test image. Image is corrupted by AWGN and blur for a BSNR = 10 dB.}
	\label{fig:text}
\end{figure}

\begin{table}
	\caption{ISNR values for the test image in \ref{fig:text} for BSNR $=20,15,10$ dB.}
	\label{tab:3}       % Give a unique label
	%
	% Follow this input for your own table layout
	%
	\centering
	\begin{tabular}{rrrrr}
		\hline\noalign{\smallskip}
		BSNR & TV-L$_2$& TV$_{p}$-L$_2$ &TV$_{p}^{\mathrm{sv}}$-L$_2$ &DTV$_p^{\mathrm{sv}}$-L$_2$ \\
		\noalign{\smallskip}\hline\noalign{\smallskip}
		20               & 2.07 & 2.43 &2.53 & \textbf{2.78} \\
		15               & 1.83 & 2.06 &2.26 & \textbf{2.56} \\
		10               & 0.94  & 1.55 &1.86 & \textbf{2.45} \\
		\noalign{\smallskip}\hline
	\end{tabular}
	
\end{table}

\begin{table}
	\caption{SSIM values for the test image in \ref{fig:text} for BSNR $=20,15,10$ dB .}
	\label{tab:4}       % Give a unique label
	%
	% Follow this input for your own table layout
	%
	\centering
	\begin{tabular}{rrrrr}
		\hline\noalign{\smallskip}
		BSNR & TV-L$_2$& TV$_{p}$-L$_2$ &TV$_{p}^{\mathrm{sv}}$-L$_2$ &DTV$_p^{\mathrm{sv}}$-L$_2$ \\
		\noalign{\smallskip}\hline\noalign{\smallskip}
		20               & 0.78 & 0.79 &0.80& \textbf{0.81} \\
		15               & 0.76 & 0.77 & 0.78& \textbf{0.79} \\
		10               & 0.70  & 0.72 & 0.74& \textbf{0.76} \\
		\noalign{\smallskip}\hline
	\end{tabular}
\end{table}

\begin{figure}[tbh]
	\centering
\begin{subfigure}[t]{0.21\textwidth}
\includegraphics[width=1.2in]{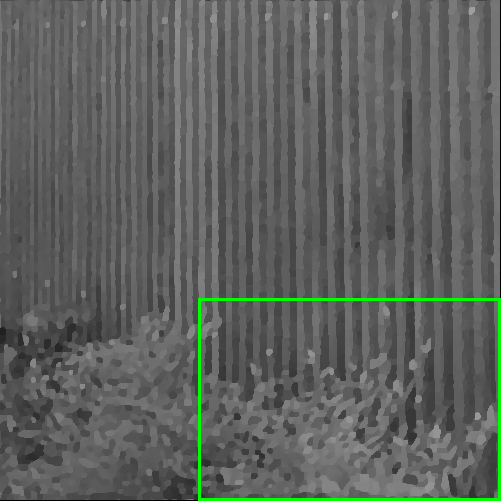}
\caption{TV-L$_2$.}
\label{fig:barbara_text1}
\end{subfigure}
\begin{subfigure}[t]{0.21\textwidth}
\centering
\includegraphics[width=1.2in]{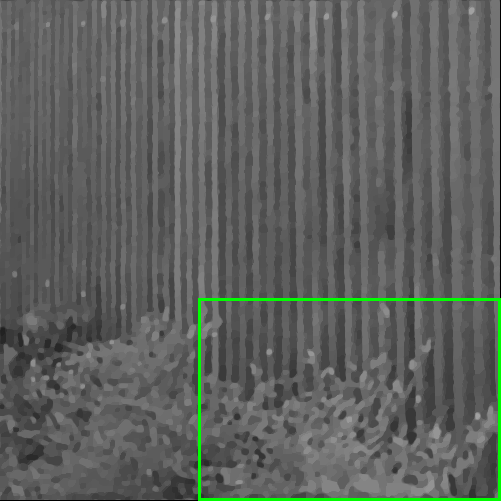}
\caption{TV$_p$-L$_2$.}
\label{fig:barbara_text2}
\end{subfigure}
\begin{subfigure}[t]{0.21\textwidth}
\centering
\includegraphics[width=1.2in]{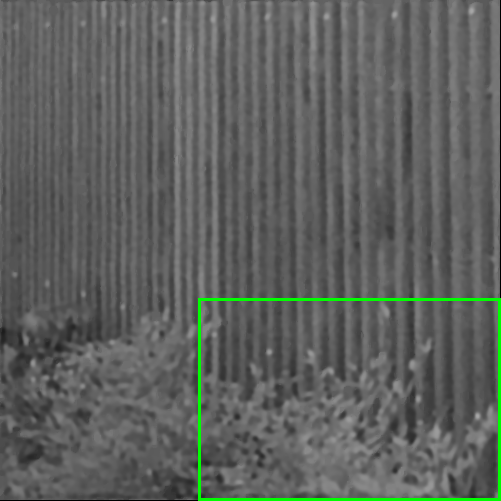} 
\caption{TV$_{\alpha,p}^{\mathrm{sv}}$-L$_2$.}
\label{fig:barbara_text3}
\end{subfigure}
\begin{subfigure}[t]{0.21\textwidth}
\centering
\includegraphics[width=1.2in]{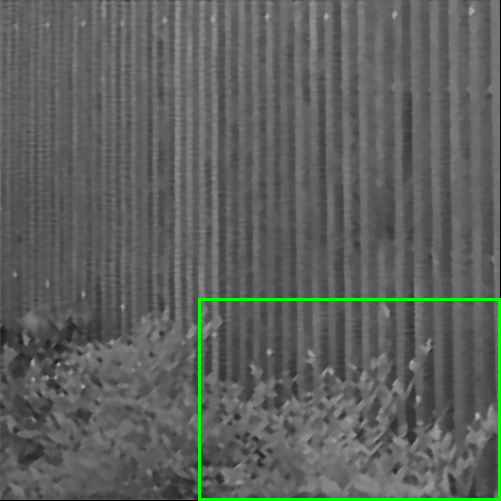}
\caption{DTV$_{p}^{\mathrm{sv}}$-L$_2$.}
\label{fig:barbara_text4}
\end{subfigure}  \\
\begin{subfigure}[t]{0.21\textwidth}
\includegraphics[width=1.2in]{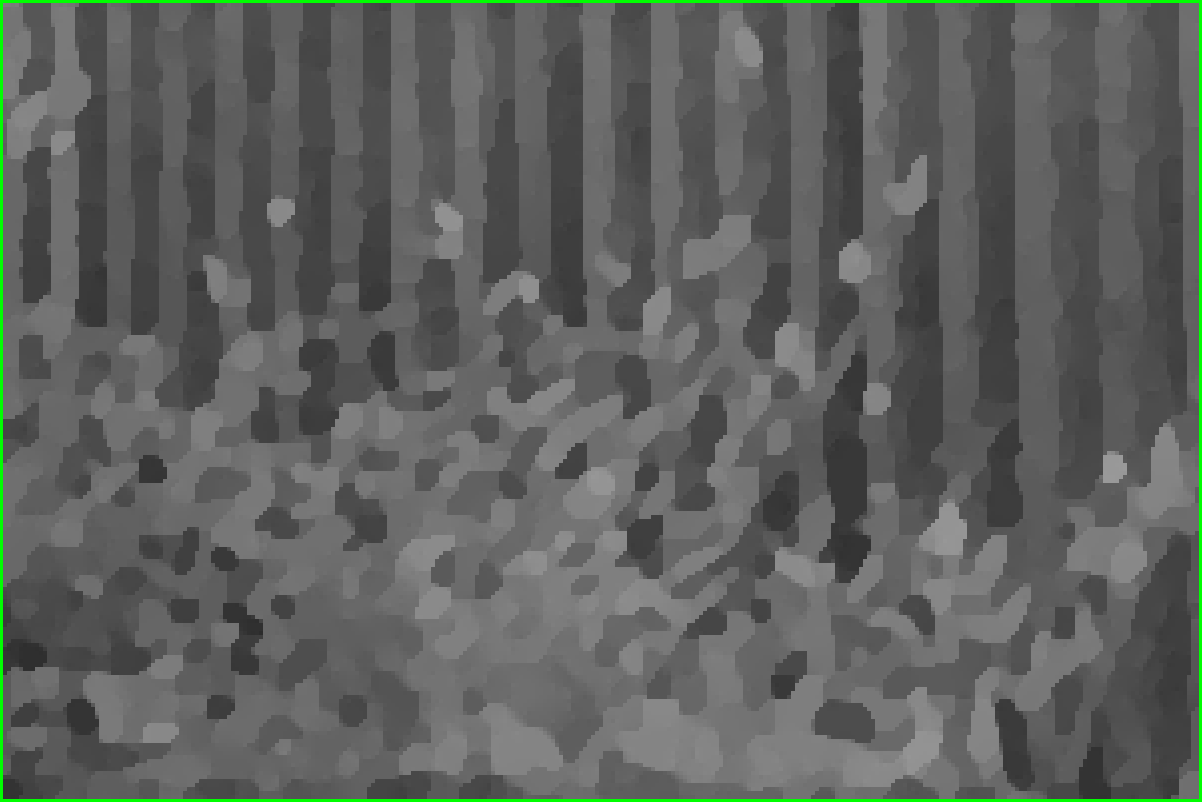} 
\caption{Zoom of \ref{fig:barbara_text1}.}
\end{subfigure}
\begin{subfigure}[t]{0.21\textwidth}
\centering
\includegraphics[width=1.2in]{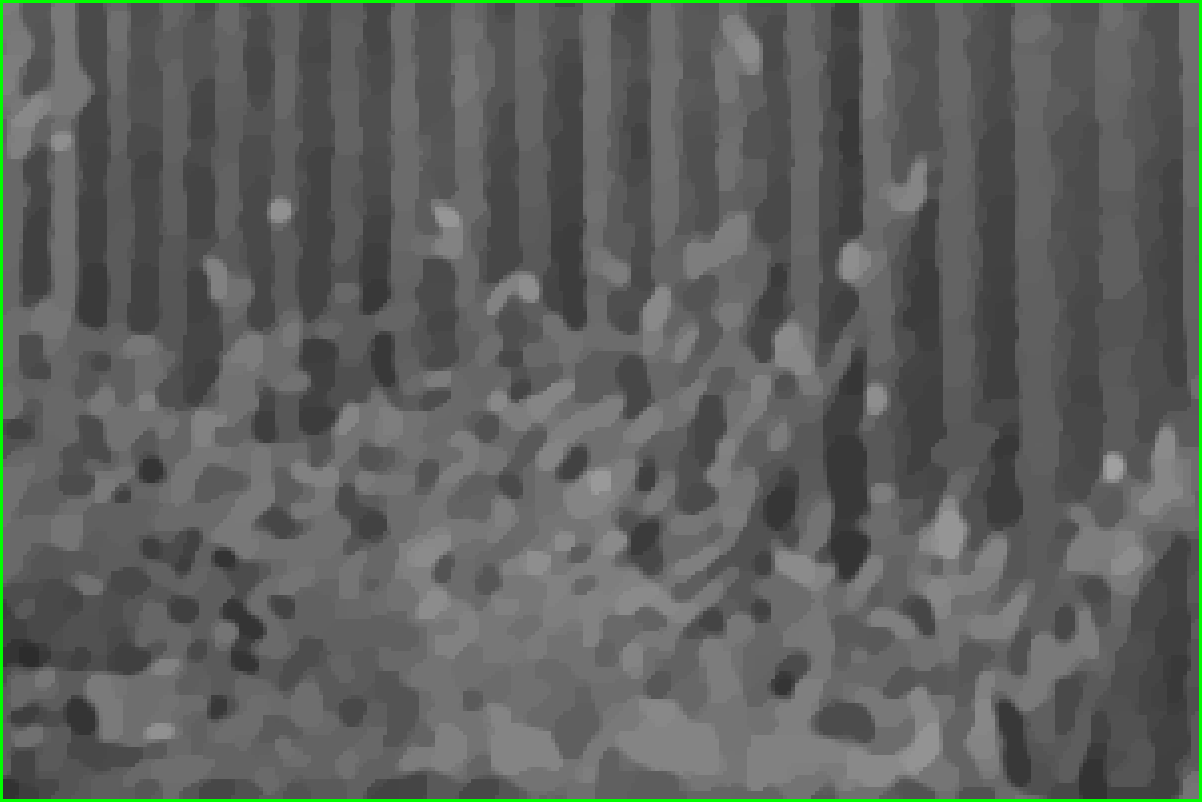}
\caption{Zoom of \ref{fig:barbara_text2}.}
\end{subfigure}
\begin{subfigure}[t]{0.21\textwidth}
\centering
\includegraphics[width=1.2in]{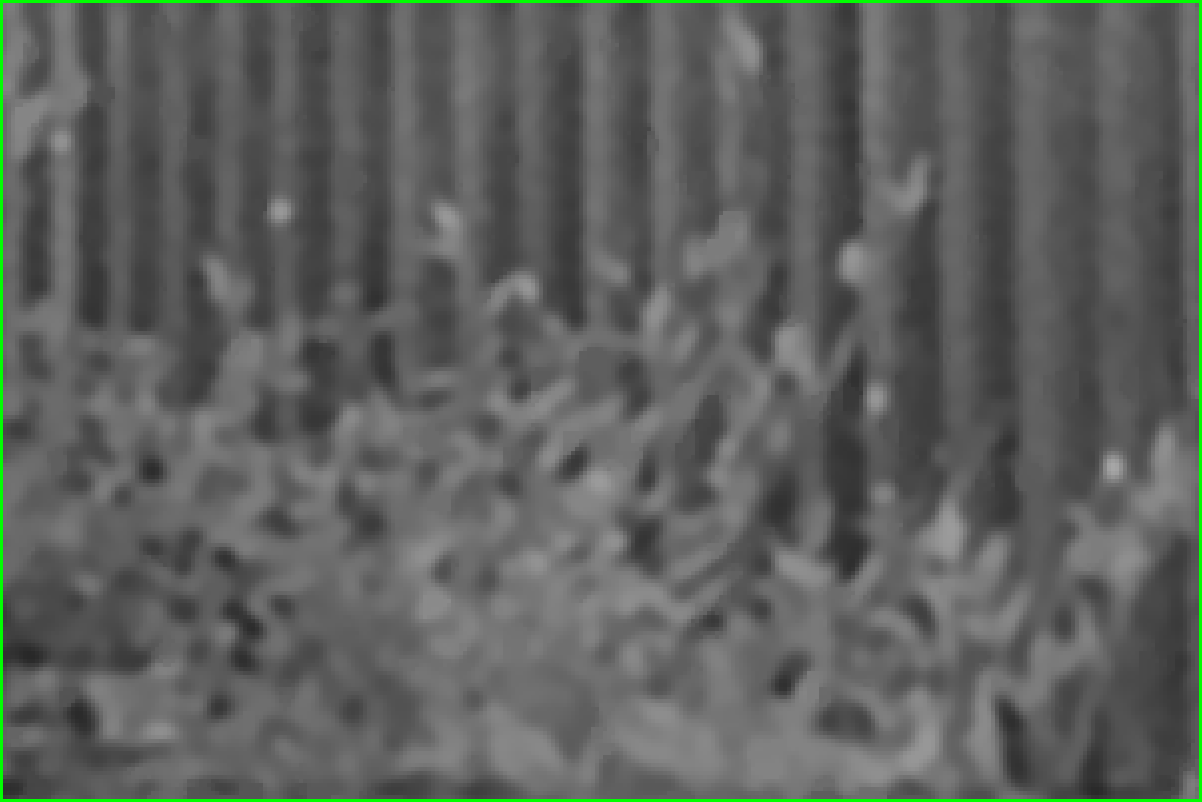}
\caption{Zoom of \ref{fig:barbara_text3}.}
\end{subfigure}
\begin{subfigure}[t]{0.21\textwidth}
\centering
\includegraphics[width=1.2in]{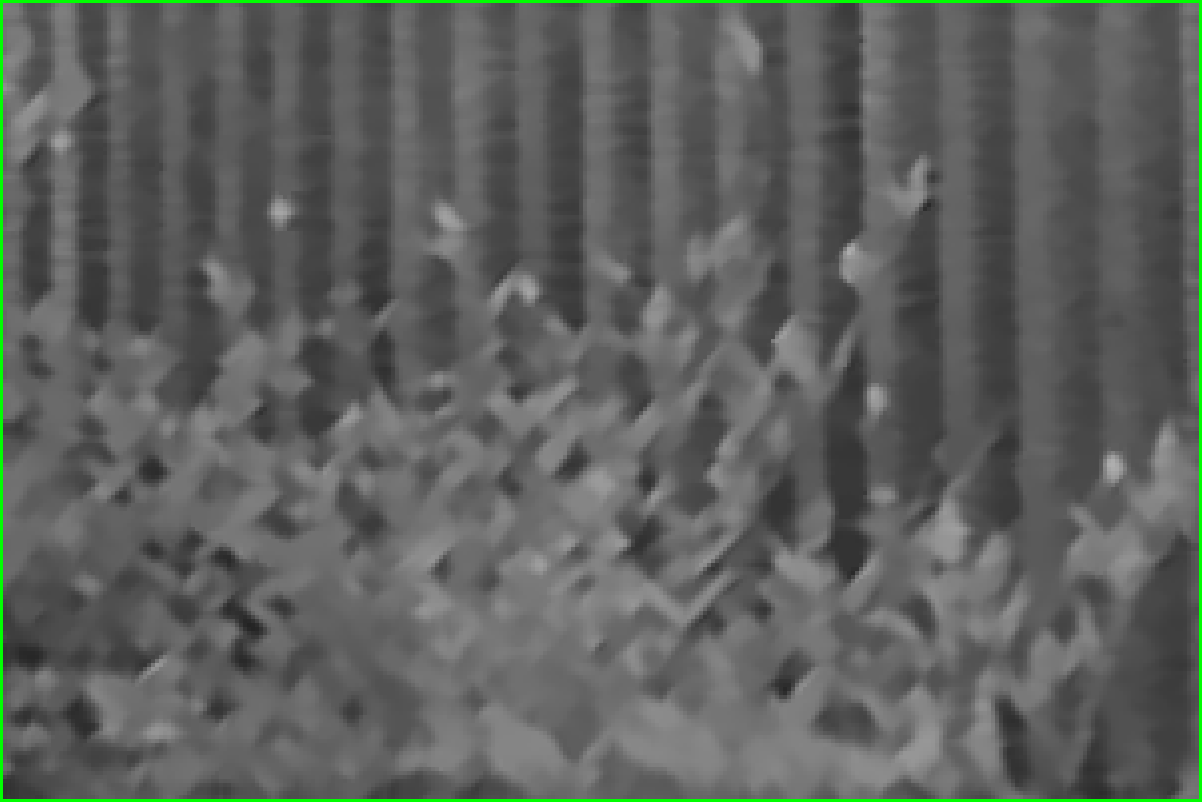}
\caption{Zoom of \ref{fig:barbara_text4}.}
\end{subfigure} 
\caption{Detail of reconstruction of natural test image \ref{fig:text}. Texture components are much better preserved by encoding directional information.}
	\label{fig:textrec}
\end{figure}

\section{Conclusions and outlook} \label{sec:conc}

We presented a new space-variant anisotropic image regularisation term for image restoration problems based on the statistical assumption that the gradients of the target image are distributed locally according to a BGGD. This leads to a highly flexible regulariser characterised by four per-pixel free parameters. For their automatic and effective selection, we propose a neighbourhood-based estimation procedure relying on the ML approach. 
We empirically show the good asymptotic properties of the estimator and its consistency with the geometric intuition about the behaviour of the BGGD in various image regions (edges, corners and homogeneous areas).
In terms of such parameters, we then study the corresponding space-variant and directional energy functional and apply it to  the problem of image restoration in case of additive white Gaussian noise.  Numerically, the restored image is computed efficiently by means of an iterative algorithm based on ADMM. 
The proposed regulariser is shown to outperform other space-variant restoration models and it is shown to achieve high quality restoration results, even when dealing with high levels of blur and noise. The directional feature of the regularisation considered results, in particular, in a better preservation of texture and details.

Future research directions include, first, the design of numerical algorithms other than ADMM with proved convergence properties also in the non-convex case such as, e.g., some suitable adaptation of the generalized Krylov subspace approaches proposed in \cite{lanzaKry2015,lanzaKry2017}. 
Then, automatic selection from the observed image of the ``optimal'' neighbourood size for the preliminary parameter estimation step is a matter worthy to be investigated. 
Finally, it would be very interesting to couple the proposed regulariser with other data fidelity terms, so as to deal with noises other than additive Gaussian.

\bibliographystyle{siamplain}
\bibliography{references}

\end{document}